\newcommand{\bc}{\begin{center}}
\newcommand{\ec}{\end{center}}
\newcommand{\be}{\begin{eqnarray}}
\newcommand{\ee}{\end{eqnarray}}
\newcommand{\ben}{\begin{eqnarray*}}
\newcommand{\een}{\end{eqnarray*}}
\newcommand{\Om}{{\rm\Omega}}
\newcommand{\Rmnum}[1]{\expandafter\@slowromancap\romannumeral #1@}
\newcommand{\cE}{{\mathcal{E}_h}}
\newcommand{\cEi}{{\mathcal{E}_h^I}}
\newcommand{\cED}{{\mathcal{E}_h^D}}
\newcommand{\cEN}{{\mathcal{E}_h^N}}
\newcommand{\cO}{\mathcal{O}}
\newcommand{\cTh}{{\mathcal{T}_h}}
\newcommand{\PcT}{{\partial\mathcal{T}_h}}
\newcommand{\R}{\mathbb{R}}
\def\S{\mathcal{S}}
\newcommand{\dsig}{\boldsymbol{\sigma}_h}
\newcommand{\dtau}{\boldsymbol{\tau}_h}
\newcommand{\hsig}{\check{\boldsymbol{\sigma}}_h}
\newcommand{\htau}{\check{\boldsymbol{\tau}}_h}
\newcommand{\drh}{\boldsymbol{r}_h}
\newcommand{\ttsig}{\tilde{\boldsymbol{\sigma}}_h}
\newcommand{\tttau}{\tilde{\boldsymbol{\tau}}_h}
\newcommand{\basig}{\bar{\boldsymbol{\sigma}}_h}
\newcommand{\batau}{\bar{\boldsymbol{\tau}}_h}
\newcommand{\tsig}{\hat{\boldsymbol{\sigma}}_h}
\newcommand{\tu}{\hat{u}_h}
\newcommand{\ttau}{\hat{\boldsymbol{\tau}}_h}
\newcommand{\tv}{\hat{v}_h}
\newcommand{\csig}{\boldsymbol{\sigma}}
\newcommand{\ctau}{\boldsymbol{\tau}}
\newcommand{\bn}{n}
\newcommand{\hu}{\check u_h}
\newcommand{\du}{u_h}
\newcommand{\hv}{\check v_h}
\newcommand{\dv}{v_h}
\newcommand{\divh}{{\rm div} }
\begin{document}

\title{New Discontinuous Galerkin Algorithms and Analysis for Linear Elasticity
with Symmetric Stress Tensor
\thanks{
The work of Hong, Ma and Xu was partially supported by 
Center for Computational Mathematics and Applications, The Pennsylvania State University. The work of Hu was supported by NSFC projects 11625101.}
}
%\subtitle{Do you have a subtitle?\\ If so, write it here}

%\titlerunning{Short form of title}        % if too long for running head

\author{Qingguo Hong       \and
Jun Hu  \and
Limin Ma    \and
Jinchao Xu %etc.
}

%\authorrunning{Short form of author list} % if too long for running head

\institute{Qingguo Hong \at
Department of Mathematics, Pennsylvania State University, University Park, PA, 16802, USA. \\
\email{huq11@psu.edu}           %  \\
%             \emph{Present address:} of F. Author  %  if needed
           \and
           Jun Hu \at
           School of Mathematical Science, Peking University, Beijing 100871, P. R. China. \\
           \email{hujun@math.pku.edu.cn} 
           \and
           Limin Ma \at
           Department of Mathematics, Pennsylvania State University, University Park, PA,
16802, USA. \\
\email{lum777@psu.edu}
\and
Jinchao Xu \at
Department of Mathematics, Pennsylvania State University, University Park, PA,
16802, USA.\\
\email{xu@math.psu.edu}
}

\date{Received: date / Accepted: date}
% The correct dates will be entered by the editor

\maketitle

\begin{abstract}
This paper presents a new and unified approach to the derivation and analysis of many existing, as well as new discontinuous
Galerkin methods  for linear elasticity problems.  The analysis is based on a
unified discrete formulation for the linear elasticity
problem consisting of four discretization variables: strong symmetric
stress tensor $\dsig$ and displacement $\du$ inside each element, and the
modifications of these two variables $\hsig$ and $\hu$ on elementary
boundaries of elements.  
Motivated by many relevant methods in the literature,
this formulation can be used to derive most existing discontinuous, nonconforming and
conforming Galerkin methods for linear elasticity problems and
especially to develop a number of new discontinuous Galerkin methods.
Many special cases of this four-field  formulation are proved to be
hybridizable and can be reduced to some known
hybridizable discontinuous Galerkin, weak Galerkin and local
discontinuous Galerkin methods by eliminating one or two of the four
fields.  As certain stabilization parameter tends to zero, this
four-field formulation is proved to converge to some conforming and
nonconforming mixed methods for linear elasticity problems.
Two families of inf-sup conditions, one known as $H^1$-based and
the other known as $H({\rm div})$-based, are proved to be uniformly
valid with respect to different choices of discrete spaces and
parameters. These inf-sup conditions guarantee the well-posedness of
the new proposed methods and also offer a new and unified analysis for
many existing methods in the literature as a by-product. Some numerical examples are provided to verify the theoretical analysis including the optimal convergence of the new proposed methods.
\keywords{linear elasticity problems \and unified formulation \and $H({\rm div})$-based method \and $H^1$-based method\and well-posedness}
% \PACS{PACS code1 \and PACS code2 \and more}
% \subclass{MSC code1 \and MSC code2 \and more}
\end{abstract}

\section{Introduction}
\label{intro} 

In this paper, we introduce a unified formulation and analysis for linear elasticity problems 
\begin{equation}\label{model}
\left\{
\begin{aligned}[rll]
A\csig-\epsilon (u)&=0\quad &\text{ in }\Om ,\\
\divh \csig&=f\quad & \text{ in }\Om ,\\
u&=0\quad &\text{ on }\Gamma_D ,\\
\csig\bn&=0\quad &\text{ on }\Gamma_N,
\end{aligned}
\right.
\end{equation}
with $\Om\subset \R^n~(n=2,3)$ and $\partial \Om=\Gamma_D\cup \Gamma_N$, $\Gamma_D\cap \Gamma_N=\varnothing$. %In this paper, we introduce a unified formulation and analysis for this model problem with $\partial \Omega=\Gamma_D$ and $g_D=0$.
 Here the displacement is denoted by $u: \Om\rightarrow \R^n$ and the stress tensor is denoted by $\csig: \Om\rightarrow \S$, where $\S$ is the set of symmetric $n\times n$ tensors. The linearized strain tensor $\epsilon (u)=\frac{1}{2}(\nabla u+\nabla u^T)$. The compliance tensor $A: \S\rightarrow \S$
\iffalse
\begin{equation}\label{lame}
A\csig = \frac{1}{2\mu}(\csig -\frac{\lambda}{2\mu+n\lambda }tr(\csig)I),\ \lambda>0,\ \mu>0
\end{equation}
is assumed to be bounded and symmetric positive definite, where $\lambda$ and $\mu$ are the Lam$\rm \acute{e}$ coefficients of the elastic material under consideration.  
\fi 
\begin{equation}\label{lame}
A\csig ={1+\nu\over E}\csig -{(1+\nu)\nu\over (1+(n-2)\nu) E}tr(\csig)I
\end{equation}
is assumed to be bounded and symmetric positive definite, where $E$ and $\nu\in (0, \frac12)$ are the Young's modulus and the Poisson's ratio of the elastic material under consideration, respectively.

Finite element method (FEM) and its variants have been widely used for numerical solutions of partial differential equations. Conforming and nonconforming FEMs in primal form are two classic Galerkin methods for elasticity and structural problems \cite{hrennikoff1941solution,courant1994variational,feng1965finite}. Mixed FEMs for the elasticity problem, derived from the Hellinger-Reissner variational principle, are also popular methods since they %avoid locking effects and 
approximate not only the displacement  but also the stress tensor. Unlike the mixed FEMs for scalar second-order elliptic problems, the strong symmetry is required for the stress tensor in the elasticity problem. This strong symmetry causes a substantial additional difficulty for developing stable mixed FEMs for the elasticity problem.
To overcome such a difficulty, it was proposed in \cite{Fraejis1975} to relax or abandon the symmetric constraint on the stress tensor by employing Lagrangian functionals. This idea was developed
 in late nineteens \cite{amara1979equilibrium, arnold1984peers, arnold1988new, stein1990mechanical,
 stenberg1986construction, stenberg1988family, stenberg1988two}, and further systematically explored in a recent work \cite{arnold2007mixed} by utilizing a constructive derivation of the elasticity complex starting from the de Rham complex \cite{eastwood2000complex} and mimicking the construction in the discrete case.
Another framework to construct stable weakly symmetric mixed finite elements was presented in \cite{boffi2009reduced}, where two approaches were particularly proposed with the first one based on {the Stokes problem} and the second one based on interpolation operators. To keep the symmetry of discrete stress, a second way is to relax the continuity of the normal components of discrete stress across the internal edges or faces of grids. This approach leads to nonconforming mixed FEMs with strong symmetric stress tensor \cite{yi2005nonconforming,yi2006new,man2009lower,hu2007lower,awanou2009rotated,arnold2003nonconforming,
arnold2014nonconforming,gopalakrishnan2011symmetric,wu2017interior,hu2019nonconforming}.
In 2002, based on the elasticity {complex}, the first family of symmetric conforming mixed elements with polynomial shape functions was proposed for {the two-dimensional case} in \cite{arnold2002mixed}, which was extended to { the } three-dimensional { case} in \cite{arnold2008finite}.
Recently, a family of conforming mixed elements with fewer degrees of freedom was proposed for any dimension by discovering a
 crucial structure of discrete stress spaces of symmetric matrix-valued polynomials on any dimensional simplicial grids and proving two basic algebraic results in \cite{hu2014family,hu2015family,hu2016finite,hu2014finite}. Those new elements can be regarded as an improvement and a unified extension to any dimension of those from \cite{arnold2002mixed} and \cite{arnold2008finite}, without an explicit use of the elasticity complex. Besides the optimal convergence property with respect to the degrees of polynomials of discrete stresses, an advantage of those elements is that it is easy to construct their basis functions, therefore implement the elements. See stabilized mixed finite elements on  simplicial grids for any dimension in \cite{chen2017stabilized}.
 
Discontinuous Galerkin (DG) methods were also widely used in numerical solutions for the elasticity problem, see \cite{chen2010local,hong2016robust, hong2019conservative, wang2020mixed}. DG methods offer the convenience to discretize problems in an element-by-element fashion and use numerical traces to glue each element together \cite{arnold2002unified, hong2012discontinuous, hong2016uniformly, hong2018parameter}. This advantage makes DG methods an ideal option for linear elasticity problems to preserve the strong symmetry of the stress tensor.
Various hybridizable discontinuous Galerkin (HDG) formulations with strong symmetric stress tensor were proposed and analyzed for linear elasticity problems, such as \cite{soon2008hybridizable,soon2009hybridizable,fu2015analysis,qiu2018hdg,chen2016robust}. The HDG methods for linear elasticity problems contain three variables -- stress $\dsig$, displacement $\du$ and numerical trace of displacement $\hat u_h$. In the HDG methods, the variable $\hat u_h$ is defined on element borders and can be viewed as the Lagrange multiplier for the continuity of the normal component of stress. Weak Galerkin (WG) methods were proposed and analyzed in \cite{wang2016locking,wang2018weak,wang2018hybridized,chen2016robust,yi2019lowest} for linear elasticity problems. The main feature of the WG methods is the weakly defined differential operators over weak functions.
A three-field decomposition method was discussed for linear elasticity problems in \cite{brezzi2005three}. A new hybridized mixed method for linear elasticity problems was proposed in~\cite{gong2019new}.

{Virtual element method is a new Galerkin scheme for the approximation of partial differential equation problems, and   admits the flexibility to deal with general polygonal and polyhedral meshes. 
Virtual element method is experiencing a growing interest towards structural mechanics problems, and has contributed a lot  to linear elasticity problems, see  \cite{da2013virtual,artioli2017stress,artioli2018family,dassi2020three} and the reference therein. Recently,  investigation of the possible interest in using virtual element method for traditional decompositions is presented in \cite{brezzi2021finite}. As shown in  \cite{brezzi2021finite}, virtual element method looks promising for high-order partial differential equations as well as Stokes and linear elasticity problems. Some other interesting methods, say the tangential-displacement normal-normal-stress method which is robust with respect both shear and volume locking, were considered in \cite{pechstein2011tangential,pechstein2018analysis}.
}

In this paper, a unified formulation is built up for linear elasticity problems following and modifying the ones in \cite{hong2020extended,hong2021extended} for scalar second-order elliptic problems. The formulation is given in terms of four discretization variables --- $\dsig$, $\hsig$, $\du$, $\hu$. The variables $\dsig$ and $\du$ approximate the stress tensor $\csig$ and displacement $u$ in each element, respectively. Strong symmetry of the stress tensor is guaranteed by the symmetric shape function space of the variable $\dsig$. The variables $\hsig$ and $\hu$ are the residual corrections to the average of $\dsig$ and $\du$ along interfaces of elements, respectively. They can also be viewed as multipliers to impose the inter-element continuity property of $\du$ and the normal component of $\dsig$, respectively.
The four variables in the formulation provide feasible choices of numerical traces, and therefore, the flexibility of recovering most existing FEMs for linear elasticity problems.
There exist two different three-field formulations by eliminating the variable $\hsig$ and $\hu$, respectively, and a two-field formulation by eliminating both. 
With the same choice of discrete spaces and parameters, these four-field, three-field, and two-field formulations are equivalent.
Moreover, some particular discretizations induced from the unified formulation are hybridizable and lead to the corresponding one-field formulation.

As shown in \cite{ hong2019unified, hong2018uniform, hong2020extended}, the analysis of the formulation is classified into two classes:  
$H^1$-based class and $H({\rm div})$-based class. Polynomials of a higher degree for the displacement than those for the stress tensor 
are employed 
for the $H^1$-based formulation  and the other way around for the $H({\rm div})$-based formulation. 
Both classes are proved to be well-posed under natural assumptions.
Unlike scalar second order elliptic problems, there is no stable symmetric $H({\rm div})$-conforming mixed finite elements in the literature that approximates the stress tensor by polynomials with degree not larger than $k$ and $k\leq n$.
This causes the difficulty to prove the inf-sup condition for the $H({\rm div})$-based formulation with $k\leq n$. The nonconforming element in \cite{wu2017interior} is employed here to circumvent this difficulty with the jump of the normal component of $\dsig$ embedded in the norm of the stress tensor $\dsig$.

The unified formulation is closely related to some mixed element methods. As some parameters approach  zero, some mixed element method{s} and primal method{s} can be proven to be the limiting cases of the unified formulation. In particular, both the nonconforming mixed element method in \cite{gopalakrishnan2011symmetric}  and the conforming mixed element  methods in \cite{hu2014finite,hu2014family,hu2015family} are some limiting cases of the  formulation.
The proposed four-field formulation is also closely related to most existing methods \cite{qiu2018hdg,chen2016robust,soon2009hybridizable,fu2015analysis,chen2010local,wang2020mixed}
for linear elasticity as  listed in the first three rows in Table \ref{tab:HDGexist},  and the first row  in Table \ref{tab:LDG1} and Table \ref{tab:LDG2}.
%as listed in Table \ref{tab:introexist}. 
More importantly, some new discretizations are derived from this formulation as listed in %the last row in Table \ref{tab:HDGexist},  and the last two rows  in Table \ref{tab:LDG1} and Table \ref{tab:LDG2}.
Table \ref{tab:intronew}. 
Under the unified analysis of the four-field formulation, all these new methods are well-posed and admit optimal error estimates.
In Table \ref{tab:intronew}, the first scheme is an $H^1$-based method and the following two schemes are $H({\rm div})$-based methods. The
 last scheme is a special case of the second one with $\gamma=0$ and $\eta=\tau^{-1}$. The last scheme is hybridizable and can be 
 written as a one-field formulation with only one globally-coupled variable. % $\hsig$ or $\hu$.
In fact, after the elimination of variable $\check \sigma_h$ and a transformation from variable $\check u_h$ to variable $\hat u_h$  in the last method of Table \ref{tab:intronew}, we obtain an optimal $H({\rm div})$-based HDG method.

The notation $\tau=\Omega(h_e^{-1})$ and $\tau=\Omega(h_e)$ in %Table \ref{tab:introexist} and 
Table \ref{tab:intronew}
 means there exist constants $c_0>0, C_0>0$ such that $c_0 h_e^{-1}\le \tau\le C_0 h_e^{-1}$ and $c_0 h_e\le \tau\le C_0 h_e$, respectively.  For $k\ge 0$,
\begin{equation}\label{Spaces}
\begin{aligned}
V^{k}_h&=\{v_h\in L^2(\Omega, \R^n): v_h|_{K}\in \mathcal{P}_k(K, \R^n), \forall
K\in \mathcal T_h \},\\
Q^k_h& = \{\dtau \in L^2(\Omega, \S):
\dtau|_K\in \mathcal{P}_k(K, \S), \forall K\in
\mathcal T_h \},\\
\check V^{k}_{h}&=\{\hv\in L^2(\cE, \R^n): v_h|_e\in \mathcal{P}_k(e, \R^n), \forall e\in \cE, \ \hv|_{\Gamma_D}=0 \},\\
 \check Q^k_{h}& {= \{\htau \in L^2(\cE, \S): \htau|_e\in \boldsymbol {\mathcal{P}}_k(e, \S),  \forall e\in \cE, \ \hsig\bn|_{\Gamma_N}=0 \},}
\end{aligned}
\end{equation}
where $\boldsymbol {\mathcal{P}}_k(K, \R^{n})$ and $\boldsymbol {\mathcal{P}}_k(e, \R^{n})$ are vector-valued  in $\R^n$ and each component is in the space of polynomials of degree at most $k$ on $K$ and $e$, respectively, and $\boldsymbol {\mathcal{P}}_k(K, \S)$ are symmetric tensor-valued functions in $\S$ and each component is in the space of polynomials of degree at most $k$ on $K$.

\iffalse
\begin{table}[!htbp] 
\scriptsize
\centering
\begin{tabular}{c|cccccccccccc}
\hline
&$\eta$& $\tau$& $Q_h$ & $V_h$ & $\check Q_h$& $\check{V}_h$ &$\|\csig-\dsig\|_0$  & $\|u-\du\|_0$& $\|\epsilon (u)-\epsilon_h (\du)\|_0$&$\|\divh_h (\csig-\dsig)\|_0$  &
\\
\hline
1&$\tau^{-1}$& $\Omega(h_e)$ &$ Q_h^k$ & $V_h^{k}$ & $\check Q_h^{k}$ &
$\check{V}_h^{k}$ &$k+\frac{1}{2}$ &$k+1$ & $k+\frac{1}{2}$&- & HDG in \cite{soon2009hybridizable,fu2015analysis}
\\
2&$\tau^{-1}$& $\Omega(h_e^{-1})$&$ Q_h^{k}$ & $V_h^{k+1}$ & $\check Q_h^{k}$ &$\check{V}_h^{k}$& $k+1$ & $k+2$&$k+1$  &- &HDG in \cite{qiu2018hdg,chen2016robust}
\\
3&$\tau^{-1}$& $\Omega(h_e^{-1})$&$ Q_h^k$ & $V_h^{k}$ &$\check Q_h^{k}$ &
$\check{V}_h^k$& $k$& $k+1$ &$k$&- & HDG in \cite{soon2009hybridizable}
\\
4&0& $\Omega(h_e^{-1})$&$ Q_h^k$ & $V_h^{k+1}$& $\check Q_h^{k}$ &
$\check{V}_h^k$ & $k+1$ & $k+2$  &$k+1$  & - &LDG in \cite{chen2010local}
\\\hline
5&$\Omega(h_e^{-1})$& 0& $ Q_h^{k+1}$  &$V_h^{k}$&
$\check Q_h^{k}$ &$\check{V}_h^{k+1}$ & $k+1$ &$k+1$ &- &$k+1$ & LDG in \cite{wang2020mixed}
\\\hline
\end{tabular}
\caption{\footnotesize{Some existing methods in the literature with $\gamma=0$. Here $\eta$ are $\tau$ are to enhance the continuity of the stress tensor $\dsig$ and displacement $\du$, respectively.}}
\label{tab:introexist}
\end{table}
\fi

\begin{table}[!htbp] 
\centering
\begin{tabular}{c|cccccccccccc}
\hline
&$\eta$& $\tau$&$\gamma$& $Q_h$ & $V_h$ & $\check{Q}_h$& $\check{V}_h$ 
\\\hline
1 &$\mathcal{O}(h_e)$& $\mathcal{O}(h_e^{-1})$&$\mathcal{O}(1)$&$ Q_h^{k}$ &  $V_h^{k+1}$& $\check{Q}_h^{r}$  &$\check{V}_h^{k}$
%\\
%2&$\tau^{-1}$& $\Omega(h_e^{-1})$&0&$ Q_h^{k}$ &$V_h^{k+1}$ & $\check{Q}_h^{k}$  &$\check{V}_h^{k}$
\\\hline
2&$\mathcal{O}(h_e^{-1})$&$\mathcal{O}(h_e)$& $\mathcal{O}(1)$ &$Q_h^{k+1}$ & $V_h^{k}$ & $\{0\}\ \text{or}\ \check{Q}_h^{m}$  &$\check{V}_h^{k+1}$ 
\\
3&$\tau^{-1}$&$\Omega(h_e)$&0&$Q_h^{k+1}$&$V_h^{k}$ & $\check{Q}_h^{k}$  &$\check{V}_h^{k+1}$ 
\\\hline
\end{tabular}
\caption{\footnotesize{New proposed methods with $r\ge \max (1, k)$ and $m\ge 0$.  For the second and third schemes, 
$\|\csig-\dsig\|_{\rm div,h}=\cO(h^{k+1})$ for any $k\ge 0$ and $\|\csig-\dsig\|_0=\cO(h^{k+2})$ if $k\ge n$.}}
\label{tab:intronew}
\end{table}
%\begin{table}[!htbp]
%\scriptsize
%\centering
%\begin{tabular}{c|cccccccccccc}
%\hline
%&$\eta$& $\tau$&$\gamma$& $Q_h$ & $V_h$ & $\check{Q}_h$& $\check{V}_h$ &$\|\csig-\dsig\|_0$& $\|u-\du\|_0$& $\|\epsilon (u)-\epsilon_h (\du)\|_0$& $\|\divh_h (\csig-\dsig)\|_0$
%\\\hline
%1 &$\mathcal{O}(h_e)$& $\mathcal{O}(h_e^{-1})$&$\mathcal{O}(1)$&$ Q_h^{k}$ &  $V_h^{k+1}$& $\check{Q}_h^{r}$  &$\check{V}_h^{k}$& $ k+1 $&$ k+2 $&$k+1$  & -
%\\
%2&$\tau^{-1}$& $\Omega(h_e^{-1})$&0&$ Q_h^{k}$ &$V_h^{k+1}$ & $\check{Q}_h^{k}$  &$\check{V}_h^{k}$& $ k+1 $&$ k+2 $&$k+1$  & -
%\\\hline
%3&$\mathcal{O}(h_e^{-1})$&$\mathcal{O}(h_e)$& $\mathcal{O}(1)$ &$Q_h^{k+1}$ & $V_h^{k}$ & $\{0\}\ \text{or}\ \check{Q}_h^{m}$  &$\check{V}_h^{k+1}$ & $ k+1 $&$ k+1 $& -  &$k+1$
%\\
%4&$\tau^{-1}$&$\Omega(h_e)$&0&$Q_h^{k+1}$&$V_h^{k}$ & $\check{Q}_h^{k}$  &$\check{V}_h^{k+1}$ &  $ k+1 $&$ k+1 $& -  &$k+1$
%\\\hline
%\end{tabular}
%\caption{\footnotesize New proposed methods with $r\ge \max (1, k)$ and $m\ge 0$. If $k\ge n$, $\|\csig-\dsig\|_0=\cO(h^{k+2})$ for the last two schemes.}
%\label{tab:intronew}
%\end{table}
Throughout this paper, we shall use letter $C$, which is independent
of mesh-size $h$ and stabilization parameters $\eta, \tau, \gamma$, 
to denote a generic
positive constant which may stand for different values at different
occurrences.  The notation $x \lesssim y$ and $x \gtrsim y$ means $x
\leq Cy$  and $x \geq Cy$, respectively. Denote $x\lesssim y\lesssim x$ by $x \eqsim y$.

The rest of the paper is organized as follows. Some notation is introduced in Section \ref{notation}. 
{In Section \ref{sec:4form}, a four-field unified formulation is derived for linear elasticity problems.  By proving uniform inf-sup conditions under two sets of assumptions, an optimal error analysis is provided for this unified formulation. 
Section \ref{sec:variants} derives some variants of this four-field formulation, and reveals their relation with some existing methods in the literature. 
Section \ref{sec:limit} illustrates two limiting cases of the unified formulation: mixed methods and primal methods. 
Numerical results are provided in Section \ref{sec:numerical} to verify the theoretical analysis including the optimal convergence of the new proposed methods.
Some conclusion remarks are given in Section \ref{concl}. 
}

\section{Preliminaries}\label{notation}

Given a nonnegative integer $m$ and a bounded domain $D\subset \mathbb{R}^n$, let $H^m(D)$, $\|\cdot\|_{m,D}$ and $|\cdot|_{m,D}$ be the usual Sobolev space, norm
and semi-norm,  respectively.  The
$L^2$-inner product on $D$ and $\partial D$ are denoted by $(\cdot,
\cdot)_{D}$ and $\langle\cdot, \cdot\rangle_{\partial D}$,
respectively. Let  $\|\cdot\|_{0,D}$ and $\|\cdot\|_{0,\partial D}$ be the norms of Lebesgue spaces $L^2(D)$ and $L^2(\partial D)$, respectively. The norms $\|\cdot\|_{m,D}$ and $|\cdot|_{m,D}$ are abbreviated as  $\|\cdot\|_{m}$ and
$|\cdot|_{m}$, respectively, when $D$ is chosen as $\Omega$.

Suppose that $\Om\subset \mathbb{R}^n$ is a bounded polygonal domain covered exactly by a shape-regular partition $\cTh$ { of polyhedra}.  Let  $h_K$ be the diameter of element $K\in \cTh$ and $h=\max_{K\in\cTh}h_K$.  Denote the set of all interior edges/faces of $\cTh$ by $\cEi$, and all edges/faces on boundary $\Gamma_D$ and $\Gamma_N$ by $\cED$ and $\cEN$, respectively.  Let $\cE=\cEi\cup \cED \cup \cEN$ and $h_e$ be the diameter of edge/face $e\in \cE$. For any interior edge/face $e=K^+\cap K^-$, let $\bn^i$ = $\bn|_{\partial K^i}$ be the unit outward normal vector on $\partial K^i$ with $i = +,-$.  For any vector-valued function $\dv$ and matrix-valued function $\dtau$, let $\dv^{\pm}$ = $\dv|_{\partial K^{\pm}}$, $\dtau^{\pm}$ = $\dtau|_{\partial K^{\pm}}$.  Define the average $\{\cdot\}$ and the jump $[\cdot ]$ on interior edges/faces $e\in \cEi$ as
follows: 
\begin{equation}\label{jumpdef}
\begin{array}{ll}
\{\dtau\}=\frac{1}{2}(\dtau^++\dtau^-),&[\dtau]=\dtau^+\bn^++\dtau^-\bn^-,\\
\{\dv\}=\frac{1}{2}(\dv^++\dv^-),&[\dv] =\dv^+\odot \bn^++\dv^-\odot \bn^- - (\dv^+\cdot \bn^+ + \dv^-\cdot \bn^-)\bm I
\end{array}
\end{equation} 
where $\dv\odot \bn= \dv\bn^T+ \bn\dv^T${ and $\bm I $ is the identity tensor}. For any boundary edge/face $e\subset \partial \Omega$, define
\begin{equation}\label{bddef}
\begin{array}{lllll}
\{\dtau\}=\dtau, &  [\dtau]=0, &\{\dv\}=\dv,&[\dv]=\dv\odot \bn - (\dv\cdot \bn)\bm I ,& \text{on }\Gamma_D,\\
\{\dtau\}=\dtau,&  [\dtau]=\dtau\bn, &\{\dv\}=\dv, & [\dv]=0,& \text{on }\Gamma_N.
\end{array}
\end{equation}
Note that  the jump $[\dv]$ in \eqref{jumpdef} is a symmetric tensor and 
\begin{equation}\label{vjumpn}
[\dv]\bn^+=\dv^+ - \dv^-,\qquad  \forall e\in \cE.%[\dv]\bn   = \dv,\qquad \forall e\in \cEi,\ e'\subset \Gamma_D.
\end{equation}
These properties are important for the Nitche's technique in \eqref{bdconstrain}, since the trace of the stress tensor $\dsig$ should be a symmetric tensor.
Define some inner products as follows:
\begin{equation} \label{equ:inner-product}
(\cdot,\cdot)_\cTh=\sum_{K\in \cTh }(\cdot,\cdot)_{K},
\quad \langle\cdot,\cdot\rangle =\sum_{e\in \cE}\langle\cdot,\cdot\rangle_{e},
\quad \langle\cdot,\cdot\rangle_{\partial\cTh }=\sum_{K\in\cTh}\langle\cdot,\cdot\rangle_{\partial K}.
\end{equation} 
With the aforementioned definitions, there exists the following identity \cite{arnold2002unified}:
\begin{equation}\label{identities}
\langle \dtau\bn, \dv\rangle_\PcT = \langle \{\dtau\}\bn, [\dv]\bn\rangle + \langle [\dtau], \{\dv\}\rangle.
\end{equation}
For any vector-valued function $\dv$ and matrix-valued function $\dtau$,  define the piecewise gradient $\epsilon_h$ and piecewise divergence ${\rm div}_h$ by
$$
\epsilon_h (\dv)\big |_K=\epsilon (\dv|_K), \quad
\divh_h \dtau\big |_K=\divh (\dtau |_K) \quad \forall K \in \cTh.
$$
Whenever there is no ambiguity, we simplify $(\cdot, \cdot)_\cTh$ as $(\cdot,\cdot)$. 
The following crucial DG identity follows from integration by parts and \eqref{identities}
\begin{equation}\label{DGidentity}
(\dtau, \epsilon_h (\dv))
=-(\divh_h \dtau, \dv)
+ \langle [\dtau], \{\dv\}\rangle
+ \langle \{\dtau\}\bn, [\dv]\bn\rangle.
\end{equation}

\section{A four-field formulation and unified analysis}\label{sec:4form} 
Let $Q_h$ and $V_h$ be approximations to $L^2(\Om, \S)$ and $L^2(\Om, \R^n)$, respectively, and be piecewise smooth  with respect to $\mathcal{T}_h$. Let
$$
\check{Q}_{h}=\{\htau \in L^2(\mathcal E_h, \S): \htau\bn|_{\Gamma_N}=0 \} \quad \text{ and }\quad
\check{V}_{h}=\{\hv \in L^2(\mathcal E_h, \mathbb{R}^n): \hv|_{\Gamma_D}=0 \}.
$$
We start with multiplying the first two equations in \eqref{model} by $\dtau\in Q_h$ and $\dv\in V_h$, respectively. It is easy to obtain that, for any $K\in \mathcal{T}_h$,
\begin{equation}\label{XGelement}
\left\{
\begin{array}{rll}
(A\csig,\dtau)_{0, K}
+(u, \divh_h \dtau)_{0, K}
-\langle u, \dtau\bn \rangle_{0,\partial K}
&=0,&\ \forall \dtau \in Q_h,\\
-(\csig, \epsilon_h (\dv))_{0, K}
+\langle \csig\bn , \dv\rangle_{0, \partial K}
&=(f,\dv)_{0, K}, &\ \forall \dv\in V_h.
\end{array}
\right.
\end{equation}
We introduce two independent discrete variables $\hsig\in \check Q_{h}$ and $\hu\in \check V_{h}$  %here to approximate $\{\csig\}$ and $u$ on edges 
as
\begin{equation}\label{fluxdef}
\csig|_{\partial K}\approx \hat \csig_{h} :=\acute {\csig}_h  + \hsig,
\qquad
u|_{\partial K} \approx  \hat u_{h} :=  \acute{u}_h +\hu,
\end{equation}
where $\acute{\csig}_h=\acute {\csig}_h(\dsig,\du)$ and $\acute {u}_h=\acute {u}_h(\dsig,\du)$ are given in terms of $\dsig$ and $\du$. %, namely $\tilde {\csig}_h=\tilde {\csig}_h(\dsig,\du)$ and $\tilde  {u}_h=\tilde {\csig}_h(\dsig,\du)$. 
Here $\hsig\in \check{Q}_{h}$ and $\hu\in \check{V}_{h}$ are some \textit{residual corrections} to $\acute  {\csig}_h$ and $ \acute  {u}_h$ along interfaces of mesh, respectively. Thus the formulation \eqref{XGelement} can be written as
\begin{equation}\label{elemhat}
\left\{
\begin{array}{rll}
(A\dsig,\dtau)_{0, K}
+(\du, \divh_h \dtau)_{0, K}
-\langle \hat u_{h}, \dtau\bn \rangle_{0, \partial K}
&=0,&\ \forall \dtau \in Q_h,\\
-(\dsig, \epsilon_h (\dv))_{0, K}
+\langle \hat{\csig}_{h}\bn, \dv\rangle_{0, \partial K}
&=(f,\dv)_{0, K}, &\ \forall \dv\in V_h.
\end{array}
\right.
\end{equation}
In order to preserve the continuity of the displacement and { the normal component of }stress across interfaces weakly, we employ  two other equations following the Nitche's technique to determine $\hsig$ and $\hu$
\begin{equation}\label{bdconstrain}
\left\{
\begin{array}{rll}
\langle    \hsig+ \tau[\du], \htau \rangle_{e }
&=0,&\forall  \htau \in \check{Q}_{h},
\\
\langle  \hu + \eta[\dsig], \hv \rangle_e
&=0,&\forall  \hv \in \check{V}_{h}.
\end{array}
\right.
\end{equation}
%The role of the auxiliary parameters $\eta$ and $\tau$ is to enhance the approximate continuity across element boundaries of the discrete strain tensor $\dsig$ and  the displacement $\du$, respectively.
The variable  $\hu$ is not only a residual correction but also a multiplier on the jump $[\dsig]$ along interfaces. Similarly, the variable  $\hsig$ is not only a residual correction but also a multiplier on the jump $[\du]$ along interfaces.
In this paper, we will discuss a special case with %\cite{chen2010local}
\begin{equation}\label{tildedef}
\acute{\csig}_h = \{\dsig\} + [\dsig]\gamma^T,
\qquad
\acute u_h=  \{\du\} - (\gamma^T\bn)[\du]\bn,%- [\du]\gamma,
\end{equation}
where $\gamma\in \R^n$ is a column vector. Thus, 
\begin{equation}\label{hatdef}
\tsig =  \{\dsig\} + [\dsig]\gamma^T +\hsig, \qquad
\tu = \{\du\} - (\gamma^T\bn)[\du]\bn +\hu.
\end{equation} 
\begin{remark}
Note that the formulation, which seeks $(\dsig, \hsig, \du, \hu)\in Q_h\times \check Q_{h}\times V_h\times \check V_{h}$ satisfying \eqref{elemhat}  and \eqref{bdconstrain}, is consistent, since $(\csig, 0, u, 0)$ satisfies the equation \eqref{elemhat}  and \eqref{bdconstrain}  if $(\csig, u)$ is the solution to the model \eqref{model}.
\end{remark}

\subsection{$H^1$-based four-field formulation}
Let $\eta_1=\tau^{-1}$ and $\eta_2=\eta$. By the DG identity \eqref{DGidentity}, the resulting $H^1$-based four-field formulation seeks  $(\dsig, \hsig, \du, \hu)\in Q_h\times \check Q_{h}\times V_h\times \check V_{h}$ such that 
\begin{equation}\label{XGgrad}
\left\{
\begin{array}{rll}
(A\dsig,\dtau)_{0, K}
- (\epsilon_h(\du),  \dtau)_{0, K}
-\langle \hat u_{h} - \du, \dtau\bn \rangle_{0, \partial K}
&=0,&\ \forall \dtau \in Q_h,\\
-(\dsig, \epsilon_h (\dv))_{0, K}
+\langle \hat{\csig}_{h}\bn, \dv\rangle_{0, \partial K}
&=(f,\dv)_{0, K},  &\ \forall \dv\in V_h,\\
\langle %2\tau^{-1}   
\eta_1\hsig+ [\du], \htau \rangle_{e }
&=0,&\forall  \htau \in \check{Q}_{h},
\\
\langle %2\eta^{-1} 
\hu + \eta_2[\dsig], \hv \rangle_e
&=0,&\forall  \hv \in \check{V}_{h},
\end{array}
\right.
\end{equation} 
with $(\tsig, \tu)$ defined in \eqref{hatdef}.
%Dislike the fact that the parameters $\eta$ and $\tau$ in \eqref{bdconstrain} must be nonzero, the parameters $\eta_1$ and $\eta_2$ in \eqref{XGgrad} can be zero.

{
Denote the $L^2$ projection onto $\check{Q}_{h}$ and $\check{V}_{h}$ by $\check P_h^\sigma$ and  $\check P_h^u$, respectively. 
Nitche's technique in \eqref{bdconstrain} implies that
\begin{equation}\label{hatjumprelu}
\hu = -\eta \check P_h^u [\dsig].
\end{equation}
By plugging in the above equation and the identity \eqref{identities} into \eqref{elemhat}, the four-field formulation \eqref{XGgrad} with $(\dsig, \hsig, \du, \hu)$ is equivalent to the following  three-field formulation, which seeks $(\dsig, \hsig, \du)\in Q_h \times \check{Q}_{h}\times V_h $ such that
\begin{equation}\label{M}
\left\{
\begin{array}{rlr}
a_W(\dsig, \hsig; \dtau, \htau) + b_W(\dtau, \htau; \du)
&=0, 
&\forall~(\dtau, \htau)\in Q_h \times\check{Q}_{h},
\\
b_W(\dsig, \hsig; \dv) 
&=(f,\dv),  
&\forall~\dv \in V_h,
\end{array}
\right.
\end{equation}
with
\begin{equation}\label{WGABC}
\left\{
\begin{array}{rl}
a_W(\dsig, \hsig; \dtau, \htau)&=(A\dsig,\dtau)
+\langle \eta_2\check{P}_h^u[\dsig] , [\dtau]\rangle
+ \langle \eta_1   \hsig , \htau \rangle,
\\
b_W(\dsig, \hsig; \dv)&= -(\dsig, \epsilon_h (\dv))
+\langle  (\{\dsig\}  + \hsig + [\dsig]\gamma^T)n, [\dv]\bn\rangle.
\end{array}
\right.
\end{equation}
Thanks to this equivalence, we will use the wellposedness of the three-field formulation \eqref{M} to prove that of
the proposed four-field formulation \eqref{XGgrad} under the following $H^1$-based assumptions: 
}
%For this four-field formulation \eqref{XGgrad}, consider the following $H^1$-based assumptions:
\begin{enumerate}
\item[(G1)] $\epsilon_h (V_h)\subset Q_h$, $\epsilon_h (V_h)|_\cE \subset \check Q_h$ and $Q_h\bn|_\cE\subset \check Q_h$;
\item[(G2)] $\check{Q}_h$ contains piecewise linear functions;
\item[(G3)] $\eta_1 = \rho_1 h_e$, $\eta_2=\rho_2h_e$ and there exist  positive constants $C_1$, $C_2$ and $C_3$ such that
$$
0< \rho_1\leq C_1,\quad  0< \rho_2\leq C_2,\quad 0\leq\gamma\leq C_3,
$$
namely $0<  \eta\leq Ch_e$ and $\tau\ge C h_e^{-1}$ in \eqref{bdconstrain}.
\end{enumerate}
Define
  \begin{equation}\label{H1norms}
\begin{array}{ll}
\|\dtau\|_{0, h}^2 =(A\dtau, \dtau)+\|\eta_1^{1/2}\{\dtau\}\|_\cE^2+ \| \eta_2^{1/2} \check{P}_h^u[\dtau ]\|_\cE^2,
&
\|\htau\|_{0, h}^2 =\|\eta_1^{1/2}\htau\|_\cE^2,
\\
\|\dv\|_{1, h}^2 =\|\epsilon_h (\dv)\|_0^2 + \|\eta_1^{-1/2}\check{P}_h^\sigma[\dv]\|_\cE^2,
&
\|\hv\|_{0, h}^2 =\|\eta_2^{-1/2}\hv\|_\cE^2.
\end{array}
\end{equation}
Assumption (G2) guarantees that  $\|\dv\|_{1, h}$ is a norm for $V_h$. %By the assumption (G3),  trace inequality and inverse inequality,  $\|\dtau\|_{0, h}^2$ is equivalent to $(A\dtau, \dtau)+(\dtau, \dtau)$.  
It follows from \eqref{jumpdef}  that
$$
[\dv] =(\dv^+ - \dv^-)\odot \bn^+ - (\dv^+ - \dv^-)\cdot \bn^+ \bm I.
$$
Thus, by \eqref{vjumpn},
\begin{equation}
\|[\dv]\|_{0, e}\leq 2\|\dv^+ - \dv^-\|_{0, e} = 2\|[\dv] \bn^+\|_{0, e}.
\end{equation}
%$$
%\check P_h^\sigma [\du] =  [\check P_h^\sigma \du]
%$$
This implies that the norm $\|\eta_1^{-1/2} \check P_h^\sigma [\du]\|_{\cE} $ is equivalent to $\|\eta_1^{-1/2} \check P_h^\sigma [\du]\bn\|_{\cE} $, namely,
\begin{equation}\label{normaljump}
c_1\|\eta_1^{-1/2} \check P_h^\sigma [\du]\|_{\cE}\leq \|\eta_1^{-1/2} \check P_h^\sigma [\du]\bn\|_{\cE} \leq c_2\|\eta_1^{-1/2} \check P_h^\sigma [\du]\|_{\cE}.
\end{equation} 
%by
%$$
%\langle \check P_h^\sigma \check {\boldsymbol \sigma}, \htau \rangle = \langle \check{\boldsymbol  \sigma}, \htau\rangle, \quad \forall \htau\in \check{Q}_{h},
%$$ 
%\begin{theorem}\label{lm:equiv}
%The wellposedness of the four-field formulation \eqref{XG} is equivalent to the wellposedness of the two-field formulation \eqref{LDGXG}.
%\end{theorem}
%\begin{proof}
%If the four-field formulation \eqref{XG} is well posed, and $(\dsig, \hsig, \du, \hu)\in Q_h\times \check Q_{h}\times V_h\times \check V_{h}$ is the solution of \eqref{XG}, then $(\dsig, \du)\in Q_h\times V_h$ is the solution of \eqref{LDGXG}. If $(\dsig, \du)\in Q_h\times V_h$ is the solution of \eqref{LDGXG}, define $\hu$ and $\hsig$ by
%$$
% \hu= {\eta \over 2} \check P_h^\sigma [\dsig], \qquad 
%\hsig = {\tau\over 2} \check P_h^u[\du].
% $$
% It is trivial to verify that $(\dsig, \hsig, \du, \hu)\in Q_h\times \check Q_{h}\times V_h\times \check V_{h}$ is the solution of \eqref{XG}, which completes the proof.
%\end{proof} 
Define the lifting operators $r_Q: L^2(\cE, \S)\rightarrow Q_h$ and  $l_Q: L^2(\cE, \mathbb{R}^n)\rightarrow Q_h$ by
\begin{equation}\label{avgQ}
(r_Q(\boldsymbol{\xi}), \dtau)= - \langle  \{\dtau\}\bn, \boldsymbol{\xi}\bn\rangle,\quad (l_Q(w), \dtau)= - \langle  [\dtau], w\rangle,\quad \forall\dtau\in Q_h,  
\end{equation}
respectively, and define $r_V: L^2(\cE, \mathbb{R}^n)\rightarrow V_h$ and  $l_V: L^2(\cE, \S)\rightarrow V_h$ by
\begin{equation}\label{avgV}
(r_V(w), \dv)=  -\langle  \{\dv\}, w\rangle,\quad (l_V(\boldsymbol{\xi}), \dv)= - \langle  [\dv]\bn, \boldsymbol{\xi}\rangle,\qquad \forall\dv\in V_h,
\end{equation}
respectively.
If $w|_e\in P_k(e, \R^n)$, there exist the following estimates \cite{arnold2002unified}
\begin{equation}\label{lift} 
 \|r_Q(\boldsymbol{\xi})\|_0^2\eqsim \|l_V(\boldsymbol{\xi})\|_0^2 \eqsim \|h_e^{-1/2}\boldsymbol{\xi}\|_{\cE}^2,\quad
\|l_Q(w)\|_0^2\eqsim\|r_V(w)\|_0^2 \eqsim  \|h_e^{-1/2}w\|_{\cE}^2. 
\end{equation}

\begin{theorem}\label{Th:inf-supGrad}
Under Assumptions (G1)--(G3), the formulation \eqref{XGgrad} is uniformly well-posed with respect to the mesh size,  $\rho_1$ and $\rho_2$.  Furthermore, there exist the following properties:
\begin{enumerate}
\item  Let $(\dsig, \hsig, \du, \hu)\in Q_h\times \check Q_{h}\times V_h\times \check V_{h}$ be the solution of \eqref{XGgrad}. There exists
\begin{equation}\label{stability}
\|\dsig\|_{0, h}+ \|\hsig \|_{0, h} + \|\du\|_{1, h} +\|\hu\|_{0, h}\lesssim \|f\|_{-1,h}
\end{equation}
with $\|f\|_{-1, h}= \sup\limits_{v_h\in V_h \setminus\{0\}} \frac{(f,
  v_h)}{\|v_h\|_{1,h}}$.
%{ When $\eta_1=0$, the norm $\|\dv\|_{0,h}=\|\epsilon_h(\dv)\|$ without the second term and $\du$ is continuous in some sense. $\|\htau\|_{0,h}=\|\rho_1h_e^{1/2} \htau\|_0$}
\item Let $(\csig, u)\in H^{\frac{1}{2}+\epsilon}(\Om, \S)\cap H(\divh, \Om, \S)\times H^1(\Om, \R^n)$ be the solution of \eqref{model} and $(\dsig, \hsig, \du, \hu)\in Q_h \times \check Q_{h} \times V_h \times \check V_{h} $ be the solution of the formulation \eqref{XGgrad}, the quasi-optimal approximation holds as follows:
\begin{equation}\label{optimal_error}  
\begin{aligned}
&\|\csig-\dsig\|_{0, h}+ \|\hsig \|_{0, h} + \|u-\du\|_{1, h} +\|\hu\|_{0, h}
\\
\lesssim &\inf_{\dtau\in Q_h, \dv\in V_h} \big ( \|\csig-\dtau\|_{0, h}  + \|u-\dv\|_{1, h}\big ). 
\end{aligned}
\end{equation}
%with constant $C_g$ independent  of $h$.
\item If $\csig\in H^{k+1}(\Om, \S)$, $u\in H^{k+2}(\Om, \R^n) ( k\ge 0 )$ and let $(\dsig, \hsig, \du, \hu)\in Q_h^k\times \check Q_{h}^r\times V_h^{k+1}\times \check V_{h}^k$ be the solution of \eqref{XGgrad} with $r{\ge }\max(1, k)$, then we have the following error estimate:
\begin{equation}\label{error}
\|\csig-\dsig\|_{0, h}+ \|\hsig \|_{0, h} + \|u-\du\|_{1, h} +\|\hu\|_{0, h}\lesssim h^{k+1}(|\csig|_{k+1} + |u|_{k+2}).
\end{equation}
%with a constant $C_g$ independent  of $h$.
\end{enumerate}
\end{theorem}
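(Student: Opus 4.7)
The plan is to reduce the four-field system \eqref{XGgrad} to the equivalent three-field saddle-point problem \eqref{M} (this equivalence is provided in the excerpt via \eqref{hatjumprelu}) and then apply Brezzi's theory under the norms \eqref{H1norms}. Well-posedness of \eqref{M} will immediately yield the stability estimate of part (1); parts (2) and (3) follow by a Strang-type argument combined with standard approximation estimates, using the consistency remark that $(\csig,0,u,0)$ solves \eqref{XGgrad} with exact data.

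For the Brezzi conditions I would proceed as follows. Continuity of $a_W$ and $b_W$ in the norms \eqref{H1norms} is immediate from Cauchy--Schwarz, the inverse trace inequality (which, together with $\eta_1=\rho_1 h_e$ and $\eta_2=\rho_2 h_e$ from (G3), controls boundary contributions of $Q_h$-functions), and the bound $0\le\gamma\le C_3$. Coercivity of $a_W$ on the kernel of $b_W$ with respect to $(\|\dtau\|_{0,h}^2+\|\htau\|_{0,h}^2)^{1/2}$ follows by testing $a_W$ with $(\dtau,\htau)$ itself: this produces $(A\dtau,\dtau)+\|\eta_2^{1/2}\check P_h^u[\dtau]\|_\cE^2+\|\eta_1^{1/2}\htau\|_\cE^2$, and the only missing piece $\|\eta_1^{1/2}\{\dtau\}\|_\cE^2$ inside $\|\dtau\|_{0,h}$ is recovered by the inverse trace inequality together with $\eta_1\le C_1 h_e$, then absorbed into $(A\dtau,\dtau)$ using the positive-definiteness of $A$.

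The main step is the inf-sup condition for $b_W$: given $\dv\in V_h$ I would construct $(\dtau^\ast,\htau^\ast)$ whose combined norm is controlled by $\|\dv\|_{1,h}$ and for which $b_W(\dtau^\ast,\htau^\ast;\dv)\gtrsim\|\dv\|_{1,h}^2$. A natural choice is $\dtau^\ast=-\epsilon_h(\dv)$, admissible by (G1), which produces $\|\epsilon_h(\dv)\|_0^2$ from the volume term, paired with $\htau^\ast$ proportional to $\eta_1^{-1}\check P_h^\sigma[\dv]$ so that $\langle\htau^\ast\bn,[\dv]\bn\rangle$ reproduces $\|\eta_1^{-1/2}\check P_h^\sigma[\dv]\|_\cE^2$ after the equivalence \eqref{normaljump}. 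The cross terms $\langle\{\dtau^\ast\}\bn,[\dv]\bn\rangle$ and $\langle[\dtau^\ast]\gamma^T\bn,[\dv]\bn\rangle$ are controlled by trace-inverse estimates, the scaling $\eta_1\lesssim h_e$, and Young's inequality, using $0\le\gamma\le C_3$ from (G3). Combining these with a sufficiently small multiple of the kernel coercivity yields the full inf-sup.

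With uniform Brezzi in hand, part (1) follows directly. For part (2), since $(\csig,0,u,0)$ satisfies \eqref{XGgrad} with the exact data, the discrete error solves a homogeneous system and the Brezzi estimate gives the quasi-optimal bound \eqref{optimal_error}. For part (3), standard componentwise interpolation in $Q_h^k$ and $V_h^{k+1}$ combined with the local trace inequality bounds the best-approximation right-hand side in \eqref{optimal_error} by $h^{k+1}(|\csig|_{k+1}+|u|_{k+2})$; the requirement $r\ge\max(1,k)$ on $\check Q_h^r$ is precisely what ensures that (G1) and (G2) hold simultaneously. I expect the hardest step to be the inf-sup for $b_W$, in particular the bookkeeping for the $\gamma$-cross-term and the symmetric-tensor trace coupling via $\check P_h^\sigma$, since these terms mix volume and boundary pieces and the constants must remain uniform in $\rho_1$ and $\rho_2$.
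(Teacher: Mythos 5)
Your proposal follows essentially the same route as the paper's proof: reduce \eqref{XGgrad} to the equivalent three-field system \eqref{M}, establish coercivity of $a_W$ on $Q_h\times\check Q_h$ and the inf-sup condition for $b_W$ using the test pair built from $\epsilon_h(\dv)$ and $\eta_1^{-1}\check P_h^\sigma[\dv]$, and conclude by Brezzi's theorem, with parts (2)--(3) following from consistency and standard approximation. The only cosmetic difference is that the paper augments $\htau$ by $\{\epsilon_h(\dv)\}+[\epsilon_h(\dv)\gamma^T]$ so that the average and $\gamma$ cross terms cancel exactly, whereas you absorb them via trace-inverse estimates and Young's inequality; both are valid, and your sign choice $\dtau^\ast=-\epsilon_h(\dv)$ is in fact the one consistent with the stated sign of $b_W$ in \eqref{WGABC}.
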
 
%\begin{theorem}\label{lm:Grad}
%Under the Assumptions (G1)--(G3), the formulations \eqref{XGgrad}, \eqref{XGdiv}, \eqref{XGH},\eqref{XGW} and \eqref{XGDG} are uniformly well posed with respect to 
%mesh size, $\rho_1, \rho_2$.
%\end{theorem}
\begin{proof}
{
Since the four-field formulation \eqref{XGgrad} is equivalent to the three-field formulation \eqref{M}, it  suffices to prove that \eqref{M} is well-posed under Assumptions (G1) -- (G3), namely the coercivity of $a_W(\cdot, \cdot ; \cdot, \cdot)$ and inf-sup condition for $b_W(\cdot, \cdot ; \cdot)$ in \eqref{WGABC}. 

By the definitions of bilinear form $a_W(\cdot, \cdot ; \cdot, \cdot)$ and norms in \eqref{H1norms}, 
\begin{equation}\label{three-aco}
a_W(\dtau, \htau; \dtau, \htau)\ge c\left(\|\dtau\|_{0, h}^2+\|\htau\|^2_{0, h}\right),\quad \forall \dtau\in Q_h, \htau \in \check Q_h, 
\end{equation}
which is coercive on $Q_h\times \check Q_h$. 

For  any $\dv\in V_h$, take $\dtau=\epsilon_h(\dv)\in Q_h$ and $\htau=\eta_1^{-1} \check{P}_h^\sigma[\dv] + \{\epsilon_h(\dv)\} +  [\epsilon_h(\dv)\gamma^T]$. It holds that
\begin{equation} \label{eq:grad:1}
\begin{split}
b_W(\dtau, \htau; \dv)& = (\epsilon_h (\dv), \epsilon_h (\dv)) + \langle \eta_1^{-1} \check{P}_h^\sigma[\dv]\bn,  \check{P}_h^\sigma[\dv]\bn\rangle
\gtrsim \|\dv\|_{1, h}^2.
\end{split}
\end{equation}
By trace inequality and inverse inequality, we have 
\begin{equation}\label{eq:grad:3}
\begin{split}
\|\dtau\|_{0, h}^2 + \|\htau\|_{0, h}^2
=&(A\epsilon_h (\dv), \epsilon_h (\dv))+\|\eta_1^{1/2}\{\epsilon_h (\dv)\}\|_0^2+ \| \eta_2^{1/2} \check{P}_h^u[\epsilon_h (\dv)]\|_0^2
\\
&+\|\eta_1^{1/2}(\eta_1^{-1} \check{P}_h^\sigma[\dv] + \{\epsilon_h(\dv)\} + [\epsilon_h(\dv)\gamma^T])\|_0^2
\\
\lesssim& \|\epsilon_h (\dv)\|_0^2  + \|\eta_1^{-1/2}\check{P}_h^\sigma[\dv]\|_0^2=\|\dv\|_{1, h}^2. %C(1+\rho_1 + \rho_2)
\end{split}
\end{equation}
It follows that 
\begin{equation}\label{three-binfsup}
\inf_{v_h\in V_h}\sup_{(\dtau,\htau)\in Q_h\times \check Q_h} \frac{b_W(\dtau, \htau; \dv)}{(\|\dtau\|_{0, h}+\|\htau\|_{0, h}) \|\dv\|_{1, h}}\gtrsim 1. 
\end{equation}
By Theorem 4.3.1 in \cite{boffi2013mixed}, a combination of  \eqref{three-aco} and \eqref{three-binfsup} completes the proof.
}
\end{proof}

\begin{remark}\label{remarkgrad}
For the case $\eta_1=0$, the third equation in \eqref{XGgrad} implies that $\check P^\sigma_h [\du]=0$. The corresponding discrete space for $\du$ becomes
$$
V_h^P = \{\dv\in V_h:  \langle  [\dv], \htau \rangle_{e }=0,\ \forall \htau\in \check Q_h\},
$$
and the norm for $\du$ reduces to
$$
\|\du\|_{1, h} =\|\epsilon_h (\du)\|_0.
$$
For this case, $\dsig$, $\du$ and $\hu$ are unique for the four-field formulation \eqref{XGgrad}. The error estimates  \eqref{stability}, \eqref{optimal_error} and \eqref{error} in Theorem \ref{Th:inf-supGrad} also hold for this case.

For the case $\eta_2=0$, the last equation in \eqref{XGgrad} implies that $\hu=0$, therefore $\|\hu\|_{0, h}=0$. The error estimates in Theorem \ref{Th:inf-supGrad} still holds for this case.
\end{remark}

%\begin{remark}
%Theorem \ref{Th:inf-supGrad} leads to the error analysis of $H^1$-based four-field formulation \eqref{XGgrad} and the $H^1$-based three-field formulation \eqref{XGH} in Theorem  \ref{Th:H} and  \eqref{XGW} in Theorem  \ref{Th:W}, respectively. 
%\end{remark}

\subsection{$H({\rm div})$-based four-field formulation}
Let $\tau_1=\tau$ and $\tau_2=\eta^{-1}$. Similarly, by applying  the DG identity \eqref{DGidentity} to the second equation in \eqref{elemhat}, the four-field formulation seeks  $(\dsig, \hsig, \du, \hu)\in Q_h\times \check Q_{h}\times V_h\times \check V_{h}$ such that 
\begin{equation}\label{XGdiv}
\left\{
\begin{array}{rll}
(A\dsig,\dtau)_{0, K}
+(\du, \divh_h \dtau)_{0, K}
-\langle \hat u_{h}, \dtau\bn \rangle_{0, \partial K}
&=0,&\ \forall \dtau \in Q_h,\\
(\divh_h \dsig, \dv)_{0, K}
+\langle \hat{\csig}_{h}\bn - \dsig\bn, \dv\rangle_{0, \partial K}
&=(f,\dv)_{0, K},  &\ \forall \dv\in V_h,\\
\langle %2\tau^{-1}   
\hsig+ \tau_1[\du], \htau \rangle_{e }
&=0,&\forall  \htau \in \check{Q}_{h},
\\
\langle %2\eta^{-1} 
\tau_2\hu + [\dsig], \hv \rangle_e
&=0,&\forall  \hv \in \check{V}_{h},
\end{array}
\right.
\end{equation} 
with $(\tsig, \tu)$ defined in \eqref{hatdef}.

{ %Denote the $L^2$ projection onto $\check{Q}_{h}$ and $\check{V}_{h}$ by $\check P_h^\sigma$ and  $\check P_h^u$, respectively. 
Nitche's technique in \eqref{bdconstrain} implies that
\begin{equation}\label{hatjumprel}
\hsig = -\tau \check P_h^\sigma[\du],\quad \hu = -\eta \check P_h^u [\dsig].
\end{equation}
By plugging in the above equations and the identity \eqref{identities} into \eqref{elemhat}, the four-field formulation \eqref{XGdiv} with $(\dsig, \hsig, \du, \hu)$ is equivalent to the following  two-field formulation,  which seeks $(\dsig, \du)\in Q_h \times V_h$ such that
\begin{equation}\label{LDGXG}
\left\{
\begin{array}{rlr}
a_D(\dsig,\dtau) + b_D(\dtau, \du)
&=0,
&\forall \dtau\in Q_h,
\\
b_D(\dsig, \dv) - c_D(\du, \dv) 
&=(f,\dv),
&\forall \dv \in V_h, 
\end{array}
\right.
\end{equation}
with 
\begin{equation}
\left\{
\begin{array}{rl}
a_D(\dsig,\dtau)&=(A\dsig,\dtau)
+\langle   \eta \check{P}_h^u [\dsig], [\dtau]\rangle,
\\
b_D(\dsig, \dv)&=(\divh_h\dsig,\dv)
-\langle  [\dsig] , \{\dv\}\rangle
+\langle (\gamma^T\bn) [\dsig], [\dv]\bn\rangle\\
&=-(\dsig, \epsilon_h (\dv))
+\langle  \{\dsig\}\bn , [\dv]\bn\rangle
+ \langle (\gamma^T\bn) [\dsig], [\dv]\bn\rangle,
\\
c_D(\du,\dv) &=\langle   \tau \check{P}_h^\sigma[\du]\bn, [\dv]\bn\rangle. 
\end{array}
\right.
\end{equation}  
Thanks to this equivalence, we will use the wellposedness of this two-field formulation \eqref{LDGXG} to prove that of
the proposed four-field formulation \eqref{XGdiv} under the following $H(\rm div)$-based assumptions:
}
%Dislike the fact that the parameters $\eta$ and $\tau$ in \eqref{bdconstrain} must be nonzero, the parameters $\tau_1$ and $\tau_2$ in \eqref{XGgrad} can be zero.
%For this four-field formulation \eqref{XGdiv}, consider the following $H(\rm div)$-based assumptions: 
\begin{enumerate}
\item[(D1)] $Q_h=Q_{h}^{k+1}$, $\divh_h Q_h=V_h\subset V_h^k$, $k\ge 0$;
\item[(D2)] $\check V_h^{k+1}\subset \check V_h$; 
\item[(D3)] $\tau_1=\rho_1 h_e $, $\tau_2=\rho_2 h_e$ and there exist positive constants $C_1$, $C_2$,  $C_3$ and $C_4$ such that
$$
{C_1\leq \rho_1\leq C_2,\quad 0< \rho_2\leq C_3},\quad 0\leq \gamma\leq C_4,
$$ 
namely $\eta\ge Ch_e^{-1}$ and { $C_1h_e \leq\tau\leq C_2h_e$}.
\end{enumerate}
%The case that $\tau_1=\tau_2=0$ in Assumption (D3) implies that the parameter $\tau$ in \eqref{bdconstrain}  can be zero and $\eta$ can tend to infinity. Thus, the resulting four-field formulation tends to a mixed method as analyzed in Section \ref{sec:limit}. 
We first state a crucial estimate \cite{wu2017interior} for the analysis of $H({\rm div})$-based  formulation as follows.
\begin{lemma}\label{lm:div}
For any $\du \in V_h^k$, there exists $\drh \in Q_{h}^{k+1}$ such that
\begin{equation}
\divh_h   \drh=\du, \qquad
\|\drh\|_0 + \|\divh_h \drh\|_0 + \| h_e^{-1/2} [\drh ]\|_0\leq C_0 \|\du\|_0.
\end{equation}
and
\begin{equation}
 \langle [\drh], \hv \rangle=0,~\forall~ \hv \in \check V^k_{h}.
\end{equation}
\end{lemma}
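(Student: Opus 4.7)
\noindent\textbf{Proof proposal for Lemma \ref{lm:div}.} The plan is to invoke the nonconforming symmetric mixed element constructed in \cite{wu2017interior} and use its built-in stability as a Fortin-type operator. Recall that in \cite{wu2017interior} a subspace $\widetilde Q_h \subset Q_h^{k+1}$ of piecewise symmetric $(k{+}1)$-degree tensors is defined by imposing \emph{weak continuity} across each interior face $e \in \cEi$, namely
\begin{equation*}
\int_e [\dtau]\cdot \hv\,ds = 0\qquad \forall\,\hv\in \boldsymbol{\mathcal P}_k(e,\R^n),
\end{equation*}
together with the divergence property $\divh_h \widetilde Q_h = V_h^k$. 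The pair $(\widetilde Q_h, V_h^k)$ is proved in \cite{wu2017interior} to be inf-sup stable in the broken $H(\divh)$-norm
$\|\dtau\|_{\divh,h}^2 := \|\dtau\|_0^2 + \|\divh_h\dtau\|_0^2 + \|h_e^{-1/2}[\dtau]\|_{\cE}^2.$

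First I would apply this inf-sup stability: given $u_h \in V_h^k$, it produces $\drh \in \widetilde Q_h \subset Q_h^{k+1}$ with $\divh_h \drh = u_h$ and $\|\drh\|_{\divh,h}\le C_0 \|u_h\|_0$, which is exactly the first bound claimed in the lemma. Next, because $\drh \in \widetilde Q_h$, the weak-continuity condition used to define $\widetilde Q_h$ is precisely the orthogonality $\langle [\drh],\hv\rangle = 0$ for every $\hv \in \check V_h^k$ (note $\check V_h^k$ vanishes on $\Gamma_D$, so boundary faces cause no issue; on $\Gamma_N$ we already have $\dtau\bn = 0$ in the definition of $\check Q_h$, which is compatible with the face DOFs of the nonconforming element). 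This gives the second assertion directly.

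The only delicate point is the jump bound $\|h_e^{-1/2}[\drh]\|_{\cE} \lesssim \|u_h\|_0$, which is \emph{not} a consequence of a naive scaled trace inequality applied to a generic polynomial in $Q_h^{k+1}$ (that would only yield a bound by $h_e^{-1}\|\drh\|_0$, which is too weak). Instead it must be extracted from the specific construction in \cite{wu2017interior}, where the ``interior-penalty'' degrees of freedom are designed so that the non-polynomial part of the jump is a face-bubble whose $L^2(e)$ norm carries a favorable $h_e^{1/2}$ factor. I therefore expect this to be the main obstacle of the proof, and I would handle it either by citing the norm-equivalence proved in \cite{wu2017interior} directly, or by re-deriving it as follows: decompose $[\drh]|_e = \check P_h^u[\drh] + ([\drh] - \check P_h^u[\drh])$; the first term vanishes by the weak-continuity relation applied with $\hv = \check P_h^u[\drh] \in \check V_h^k$, while the second term lives in the $L^2(e)$-orthogonal complement of $\boldsymbol{\mathcal P}_k(e,\R^n)$ within the trace space, which by the bubble-function estimates of \cite{wu2017interior} is controlled by $h_e^{1/2}(\|\drh\|_{0,K} + h_K\|\divh \drh\|_{0,K})$ on each adjacent element $K$. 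Summing over faces and using $\divh_h \drh = u_h$ together with the $L^2$ stability $\|\drh\|_0 \lesssim \|u_h\|_0$ from the inf-sup step then closes the estimate.
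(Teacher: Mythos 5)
Your primary route --- quoting the construction of \cite{wu2017interior} --- is precisely what the paper does: Lemma \ref{lm:div} is introduced there as ``a crucial estimate \cite{wu2017interior}'' and no proof is given in the paper at all, so on that branch your proposal and the paper coincide (your repackaging of the result as inf-sup stability of a weakly continuous subspace $\widetilde Q_h$ in the broken $H(\divh)$-norm is an equivalent formulation of the lemma as stated in the full space $Q_h^{k+1}$ with the orthogonality as a derived property).

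Your fallback re-derivation of the jump bound, however, contains a genuine error. Weak continuity of $\drh$ cannot by itself yield $\|[\drh]\|_{0,e}\lesssim h_e^{1/2}\left(\|\drh\|_{0,K}+h_K\|\divh \drh\|_{0,K}\right)$: take a fixed reference tensor $\hat{\btau}\in \boldsymbol{\mathcal P}_{k+1}(\hat K,\S)$ whose normal trace on each face is $L^2$-orthogonal to $\boldsymbol{\mathcal P}_k$ but nonzero, transplant it to a single element $K$ of diameter $h$, and extend by zero. This function lies in your $\widetilde Q_h$, yet by polynomial trace equivalence $\|[\drh]\|_{0,e}\eqsim h^{-1/2}\|\drh\|_{0,K}$ while the right-hand side of your claimed estimate is $\eqsim h^{1/2}\|\drh\|_{0,K}$ (note $h_K\|\divh_h\drh\|_{0,K}\lesssim\|\drh\|_{0,K}$ by an inverse inequality), so the inequality fails by a factor $h^{-1}$; no face-bubble estimate can repair a scaling mismatch. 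Weak continuity annihilates only the $\boldsymbol{\mathcal P}_k$ part of the jump, and the remaining high-order part of a generic subspace member is as large as the naive trace inequality allows. The jump control in \cite{wu2017interior} is instead a property of the specific Fortin-type lifting: choose $\btau\in H^1(\Omega,\S)$ with $\divh \btau=\du$ and $\|\btau\|_1\lesssim\|\du\|_0$, then define $\drh$ elementwise by matching the face moments $\int_e(\drh-\btau)\bn\cdot p_k\,ds=0$ for all $p_k\in \boldsymbol{\mathcal P}_k(e,\R^n)$ together with interior moments that force $\divh_h\drh=\du$; since $[\btau]=0$ across interior faces, $[\drh]=[\drh-\btau]$, and the scaled trace inequality combined with $\|\drh-\btau\|_{0,K}\lesssim h_K|\btau|_{1,K}$ gives $\|h_e^{-1/2}[\drh]\|_{\cE}\lesssim|\btau|_1\lesssim\|\du\|_0$. (One smaller slip: your boundary remark invokes the constraint $\htau\bn|_{\Gamma_N}=0$ of $\check Q_h$, which is irrelevant here; the orthogonality is tested against $\check V_h^k$, which does \emph{not} vanish on $\Gamma_N$, where $[\drh]=\drh\bn$, so the lifting must additionally satisfy $\btau\bn|_{\Gamma_N}=0$.) If you simply cite the lemma, as the paper does, or reproduce the construction above, your proof stands; the subspace-norm argument should be deleted.
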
  
Define
\begin{equation}\label{divnormdef}
\begin{array}{ll}
{
\|\dtau\|_{\rm div, h}^2 =\|\dtau\|_0^2+ \|\divh_h \dtau\|_0^2+ \| \tau_2^{-1/2}[\dtau ]\|_\cE^2,}
&
\|\htau\|_{0, h}^2 =\|\tau_1^{-1/2}\htau\|_\cE^2,\\
\|\dv\|_{0, h}^2 =\|\dv\|_0^2+ \|\tau_1^{1/2}[\dv]\|_\cE^2+\|\tau_2^{1/2}\{\dv\}\|_\cE^2,
&
\|\hv\|_{0, h}^2 =\|\tau_2^{1/2}\hv\|_\cE^2.
\end{array}
\end{equation} 
{ 
A similar result to Lemma 3.3 in \cite{gatica2015analysis} is proved below.
\begin{lemma}\label{L2:eq}
There exists a constant $C > 0$, independent of mesh size $h$, such that 
\begin{equation}
(\dtau,\dtau)\le C \left((A\dtau,\dtau)+ \|\divh_h \dtau\|_0^2+ \| \tau_2^{-1/2}[\dtau ]\|_\cE^2\right),\quad \forall \dtau\in Q_h.
\end{equation}
\end{lemma}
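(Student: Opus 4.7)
The plan is to adapt the argument of Lemma 3.3 in \cite{gatica2015analysis} to the broken $H(\mathrm{div})$ setting at hand. The essential obstacle is that the compliance tensor $A$ in \eqref{lame} is only \emph{uniformly} coercive on trace-free matrices (its constant degenerates as $\nu \to 1/2$ on the volumetric part), so $(A\dtau,\dtau)$ alone cannot control $\|\dtau\|_0^2$; one has to recover control of the trace of $\dtau$ from $\|\divh_h \dtau\|_0$ and $\|\tau_2^{-1/2}[\dtau]\|_{\cE}$ by a Nečas-type duality argument.

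First I would decompose $\dtau = \mathrm{dev}(\dtau) + \frac{1}{n}\mathrm{tr}(\dtau)\,\bm I$. A direct computation from the explicit form of $A$ yields $(A\dtau,\dtau)\gtrsim \|\mathrm{dev}(\dtau)\|_0^2$ with a constant independent of $\nu$, so that the problem reduces to controlling $\|\mathrm{tr}(\dtau)\|_0$ by the right-hand side of the claimed inequality. Denote by $\bar{c}$ the mean of $\mathrm{tr}(\dtau)$ on $\Omega$. By the classical surjectivity of $\divh:H_0^1(\Omega,\mathbb{R}^n)\to L_0^2(\Omega)$ (Bogovski\u{\i}'s construction), there exists $v\in H_0^1(\Omega,\mathbb{R}^n)$ with $\divh\,v = \mathrm{tr}(\dtau)-\bar c$ and $\|v\|_1 \lesssim \|\mathrm{tr}(\dtau)-\bar c\|_0$.

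Next, using the symmetry of $\dtau$, I expand
\begin{equation*}
(\dtau,\nabla v)_{\Tma}=(\mathrm{dev}(\dtau),\epsilon(v))+\tfrac{1}{n}(\mathrm{tr}(\dtau),\divh v),
\end{equation*}
and invoke the DG identity \eqref{DGidentity} together with $[v]=0$, $\{v\}=v$ on $\cE$ (since $v\in H_0^1$) to rewrite
\begin{equation*}
(\dtau,\nabla v)_{\Tma}=-(\divh_h\dtau,v)+\langle [\dtau],v\rangle.
\end{equation*}
Bounding the right-hand side with Cauchy--Schwarz, a discrete trace inequality $\sum_e h_e\|v\|_{0,e}^2\lesssim \|v\|_1^2$ and $\tau_2=\rho_2 h_e$ produces
\begin{equation*}
\|\mathrm{tr}(\dtau)-\bar c\|_0^2 \lesssim \big(\|\mathrm{dev}(\dtau)\|_0+\|\divh_h\dtau\|_0+\|\tau_2^{-1/2}[\dtau]\|_{\cE}\big)\|\mathrm{tr}(\dtau)-\bar c\|_0,
\end{equation*}
which controls the mean-zero component.

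Finally I would absorb the constant $\bar c$ by testing the relation $(\divh_h \dtau,\phi)_\Tma=-(\dtau,\nabla\phi)+\langle [\dtau],\phi\rangle$ against a fixed smooth $\phi$ with $\divh(\phi)\equiv 1$ (for instance an appropriately chosen linear combination of coordinate functions on $\Omega$), which links $\bar c$ linearly to $\|\divh_h\dtau\|_0+\|\tau_2^{-1/2}[\dtau]\|_{\cE}+\|\mathrm{dev}(\dtau)\|_0$. Combining with Step 1 and the identity $\|\dtau\|_0^2=\|\mathrm{dev}(\dtau)\|_0^2+\tfrac{1}{n}\|\mathrm{tr}(\dtau)\|_0^2$, then applying Young's inequality, yields the desired bound. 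The principal technical difficulty is the correct scaling of the boundary jump term: matching the weight $\tau_2=\rho_2 h_e$ through the trace inequality is what allows the estimate to remain uniform in $h$, and handling the rigid-body / constant part $\bar c$ without additional boundary conditions on $Q_h$ is the subtle step where the choice of test function $\phi$ really matters.
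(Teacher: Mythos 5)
Your strategy coincides with the paper's own proof, which writes $A=A_\infty+c_\nu\,{\rm tr}(\cdot)\bm I$ so that $(A\dtau,\dtau)\ge (A_\infty\dtau,\dtau)=\tfrac{1+\nu}{E}\|{\rm dev}(\dtau)\|_0^2$ and then simply cites the proof of Lemma 3.3 in \cite{gatica2015analysis} for the dev--div--jump estimate; you are reconstructing exactly the argument hidden in that citation. Your treatment of the mean-zero part of ${\rm tr}(\dtau)$ is correct: for the Bogovski\u{\i} field $v\in H_0^1(\Omega,\R^n)$ all jump contributions of $v$ vanish (including on $\Gamma_D$ and $\Gamma_N$ by \eqref{bddef}), the identity you use is precisely \eqref{DGidentity}, and matching the weight $\tau_2=\rho_2 h_e$ against the scaled trace inequality is the right bookkeeping, uniformly in $h$ and in $\rho_2\le C_3$.

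The gap is in your constant-mode step. For a fixed smooth $\phi$ that does not vanish on $\Gamma_D$ --- in particular for your suggested linear combination of coordinate functions --- the relation you test is not the DG identity: \eqref{DGidentity} carries the additional term $\langle\{\dtau\}\bn,[\phi]\bn\rangle$, and on $\Gamma_D$ one has $[\phi]=\phi\odot\bn-(\phi\cdot\bn)\bm I\neq 0$ while $[\dtau]=0$ there by \eqref{bddef}. This produces a boundary integral of the full trace of $\dtau$ on $\Gamma_D$, which is not controlled by the right-hand side of the lemma; estimating it by an inverse trace inequality costs a factor $h_e^{-1/2}\|\dtau\|_0$ and destroys the $h$-uniformity you are after. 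The repair is to take $\phi\in H^1(\Omega,\R^n)$ with $\phi|_{\Gamma_D}=0$ and $\divh\,\phi\equiv 1$, so that $[\phi]=0$ on all of $\cE$; such a $\phi$ exists exactly when $\Gamma_N\neq\emptyset$ (by the divergence theorem, $\int_{\Gamma_N}\phi\cdot\bn=|\Omega|$ is required), and then the jump norm, which on $\Gamma_N$ contains $\|\tau_2^{-1/2}\dtau\bn\|_{0,\Gamma_N}$, controls what remains. When $\Gamma_N=\emptyset$ no admissible $\phi$ exists, and $\dtau=\bm I$ (for which ${\rm dev}(\dtau)=0$, $\divh_h\dtau=0$, and all jumps vanish) shows your $\nu$-uniform bound on $\|{\rm tr}(\dtau)\|_0$ is simply false; in that case the constant mode can only be absorbed by the term $c_\nu\|{\rm tr}(\dtau)\|_0^2$ in $(A\dtau,\dtau)$ that you discarded in your first step. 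That still yields the lemma as literally stated ($C$ independent of $h$, with $\nu$ fixed), but with a constant degenerating as $\nu\to\tfrac12$, which defeats the $\nu$-uniformity your own Step 1 was designed to preserve and on which the paper's locking-free remark relies. A correct write-up should therefore distinguish the cases $\Gamma_N\neq\emptyset$ and $\Gamma_N=\emptyset$ (or impose a mean-zero-trace constraint in the latter), and state explicitly where the constant $\bar c$ is controlled in each.
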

\begin{proof}
Denote $A_\infty\dtau={1+\nu\over E}\left(\dtau -{1\over n} tr(\dtau)I\right)$ and $c_\nu= {1+\nu\over E}\cdot\frac{1-2\nu}{n+n(n-2)\nu}>0$.
It is obvious that 
\begin{equation}\label{Ainfty}
(A\dtau,\dtau)=(A_\infty \dtau+c_\nu tr(\dtau)I , \dtau) =(A_\infty\dtau, \dtau) +c_\nu\|tr(\dtau)\|_0^2> (A_\infty\dtau, \dtau).
\end{equation}
Following the proof of Lemma 3.3 in \cite{gatica2015analysis}, there exists a positive constant $C$ such that 
\begin{equation}\label{L2Stokes}
(\dtau,\dtau)\le C \left((A_\infty\dtau,\dtau)+ \|\divh_h \dtau\|_0^2+ \| \tau_2^{-1/2}[\dtau ]\|_\cE^2\right), \quad \forall\dtau\in Q_h,
\end{equation}
where $C$ is independent of mesh size $h$.
Combining \eqref{Ainfty} and \eqref{L2Stokes}, we obtain the desired result. 
\end{proof}
}

\begin{theorem}\label{Th:inf-supDiv}
Under Assumptions (D1)--(D3), the $H(\rm div)$-based formulation \eqref{XGdiv} is well-posed with respect to the mesh size,  $\rho_1$ and $\rho_2$. Furthermore, there exist the following properties:
\begin{enumerate}
\item Let $(\dsig, \hsig, \du, \hu)\in Q_h\times \check Q_{h}\times V_h\times \check V_{h}$ be the solution of \eqref{XGdiv}. There exists
\begin{equation}\label{div_stability}
\|\dsig\|_{\rm div, h}+\|\hsig\|_{0, h} + \|\du\|_{0, h} +\|\hu\|_{0, h}\lesssim \|f\|_0.
\end{equation} 
\item Let $(\csig, u)\in H^{\frac{1}{2}+\epsilon}(\Om, \S)\cap H(\divh, \Om, \S)\times H^1(\Om, \R^n)$ be the solution of \eqref{model} and $(\dsig, \hsig, \du, \hu)\in Q_h \times \check Q_{h} \times V_h \times \check V_{h} $ be the solution of the formulation \eqref{XGdiv}, the quasi-optimal approximation holds as follows:
\begin{equation}\label{div_optimal_error}
\begin{aligned}
&\|\csig-\dsig\|_{\rm div, h}+\|\hsig\|_{0, h} + \|u-\du\|_{0, h} +\|\hu\|_{0, h}
\\
\lesssim &\inf_{\dtau\in Q_h,
\dv\in V_h} \big ( \|\csig-\dtau\|_{\rm div, h}  + \|u-\dv\|_{0, h}\big ).
\end{aligned}
\end{equation}
\item
If $\csig\in H^{k+2}(\Om, \S)$, $u\in H^{k+1}(\Om, \R^n) ( k\ge0 )$ and let $(\dsig, \hsig, \du, \hu)\in Q_h^{k+1}\times \check Q_{h}^k\times V_h^{k}\times \check V_{h}^{k+1}$ be the solution of \eqref{XGdiv}, then we have the following error estimate:
\begin{equation}\label{div_error}
\|\csig-\dsig\|_{\rm div, h}+\|\hsig\|_{0, h} + \|u-\du\|_{0, h} +\|\hu\|_{0, h}\lesssim h^{k+1}(|\csig|_{k+2} + |u|_{k+1}).
\end{equation}
\end{enumerate}
\end{theorem}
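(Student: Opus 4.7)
The approach mirrors the proof of Theorem \ref{Th:inf-supGrad}. First I would invoke the equivalence (established just above in the excerpt) between the four-field formulation \eqref{XGdiv} and the two-field formulation \eqref{LDGXG}: the last two equations of \eqref{XGdiv} uniquely determine $\hsig = -\tau\,\check P_h^\sigma[\du]$ and $\hu = -\eta\,\check P_h^u[\dsig]$ through \eqref{hatjumprel}, so $\|\hsig\|_{0,h}$ and $\|\hu\|_{0,h}$ are controlled directly by the jumps of $\du$ and $\dsig$, and it suffices to prove well-posedness of \eqref{LDGXG} by a Brezzi-type argument for mixed problems with a non-negative $c$-term. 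Consistency is automatic since $(\csig, 0, u, 0)$ satisfies \eqref{XGdiv} whenever $(\csig, u)$ solves \eqref{model}, so once stability is in hand, \eqref{div_optimal_error} follows from a standard C\'ea-type argument and \eqref{div_error} from approximation properties of $Q_h^{k+1}, V_h^k, \check Q_h^k, \check V_h^{k+1}$ applied to the smooth solution.

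For coercivity of $a_D$, Assumption (D1) forces $[\dtau]\in\check V_h^{k+1}$ for every $\dtau\in Q_h$, so (D2) yields $\check P_h^u[\dtau]=[\dtau]$ and
$$
a_D(\dtau,\dtau) = (A\dtau,\dtau)+\|\tau_2^{-1/2}[\dtau]\|_\cE^2.
$$
Combined with Lemma \ref{L2:eq} this gives $\|\dtau\|_0^2+\|\tau_2^{-1/2}[\dtau]\|_\cE^2\lesssim a_D(\dtau,\dtau)+\|\divh_h\dtau\|_0^2$, so $a_D$ dominates every piece of $\|\dtau\|_{\rm div,h}^2$ except $\|\divh_h\dtau\|_0^2$, which will be recovered through the inf-sup step. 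For the inf-sup of $b_D$, given any $\dv\in V_h\subset V_h^k$ I would apply Lemma \ref{lm:div} to obtain $\drh\in Q_h^{k+1}=Q_h$ with $\divh_h\drh=\dv$, $\|\drh\|_0+\|h_e^{-1/2}[\drh]\|_\cE\lesssim\|\dv\|_0$, and the crucial orthogonality $\langle[\drh],\hv\rangle=0$ for all $\hv\in\check V_h^k$. Since $\dv\in V_h^k$ implies that $\{\dv\}$ and $(\gamma^T\bn)[\dv]\bn$ both lie in $\check V_h^k$, the boundary terms in $b_D(\drh,\dv)$ vanish and $b_D(\drh,\dv)=\|\dv\|_0^2$; a Fortin-type test pair of the form $(\dtau,\dv)=(\dsig+\alpha\,\drh,-\du)$ with small $\alpha>0$ (where $\drh$ is the lifting of $\du$), combined with Young's inequality, the non-negativity of $c_D$, and the normal-jump equivalence \eqref{normaljump} to turn $c_D(\du,\du)$ into control of $\|\tau_1^{1/2}[\du]\|_\cE^2$, then delivers the full stability \eqref{div_stability}.

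The main obstacle is precisely this inf-sup step: because no strongly symmetric $H(\divh)$-conforming stress space of low polynomial degree is available for $k\le n$, the nonconforming lifting from Lemma \ref{lm:div} of \cite{wu2017interior} is indispensable, and its orthogonality to $\check V_h^k$ is what kills the boundary terms in $b_D(\drh,\dv)$ so that they do not contaminate the estimate with a factor involving $\tau_2^{-1/2}$ that could blow up as $\rho_2\to 0$. Carefully tracking this orthogonality, assembling coercivity and inf-sup through the Brezzi--Braess framework with a non-negative $c$-term, and then pushing the resulting stability back through \eqref{hatjumprel} to recover the $\|\hsig\|_{0,h}$ and $\|\hu\|_{0,h}$ bounds is the delicate heart of the argument; the error estimates \eqref{div_optimal_error}--\eqref{div_error} then follow by consistency plus a standard approximation argument on $Q_h^{k+1}\times\check Q_h^k\times V_h^k\times \check V_h^{k+1}$.
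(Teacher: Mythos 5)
Your proposal matches the paper's proof in all essentials: the paper likewise reduces to the two-field formulation \eqref{LDGXG}, proves the inf-sup condition for $b_D$ via the nonconforming lifting of Lemma \ref{lm:div} (whose orthogonality to $\check V_h^k$ annihilates the boundary terms so that $b_D(\drh,\dv)=\|\dv\|_0^2$), uses (D2) together with Lemma \ref{L2:eq} for the coercivity of $a_D$, and then concludes well-posedness by standard saddle-point theory, with the error estimates following from consistency and approximation. The only cosmetic difference is the assembly: the paper verifies coercivity of $a_D$ on the discrete kernel $\mathbb{K}$ --- where the identity $\divh_h\dsig=-r_V([\dsig])+l_V((\gamma^T\bn)[\dsig])$ controls the divergence by the jump seminorm --- and then cites Theorem 4.3.1 of \cite{boffi2013mixed}, whereas you unroll the same machinery by hand with the combined test pair $(\dsig+\alpha\,\drh,-\du)$ and Young's inequality.
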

\begin{proof}
Since the four-field formulation \eqref{XGdiv} is equivalent to the two-field formulation \eqref{LDGXG}, it suffices to prove that \eqref{LDGXG} is well-posed under Assumptions (D1) -- (D3).  

Consider the inf-sup of $b_D(\dsig, \dv)=(\divh_h\dsig,\dv) 
-\langle  [\dsig] , \{\dv\}\rangle 
+\langle (\gamma^T\bn) [\dsig], [\dv]\bn\rangle$. According to Lemma \ref{lm:div}, for any $\du\in V_h$, there exists $\dsig\in Q_h$ such that 
$$ 
\divh_h   \dsig=\du, \qquad 
 \langle [\dsig], \{\du\} \rangle_{0, e}= \langle [\dsig], [\du]\bn \rangle_{0, e}=0,
$$
with $\|\dsig\|_0 + \|\divh_h\dsig \|_0 + \|h_e^{-1/2}[\dsig] \|_\cE \lesssim \|\du\|_0 $. Then,
\begin{equation}\label{div1}
b_D(\dsig, \du)=\|\du\|_0^2 \ge c\|\du\|_{0, h}\|\dsig\|_{\rm div, h},%c{1\over \rho_2^{-1/2} + 1}
\end{equation}
which proves the inf-sup condition of $b_D(\cdot, \cdot)$.

Define 
$$  
%N(B)
{ \mathbb K}=\{\dsig\in Q_h: (\divh_h\dsig,\dv) 
-\langle  [\dsig] , \{\dv\}\rangle
+\langle (\gamma^T\bn) [\dsig], [\dv]\bn\rangle
=0,\ \forall \dv\in V_h\}.
$$
It follows from the definition of ${ \mathbb K}$ %$N(B)$ 
and the lifting operator in \eqref{avgV} that 
\begin{equation*}
\divh_h\dsig= -r_V([\dsig]) + l_V((\gamma^T\bn)[\dsig]),\quad \forall \dsig\in { \mathbb K}.%N(B).
\end{equation*}
According to Assumption (D2) { and Lemma \ref{L2:eq},}
\begin{equation}\label{div2}
a_D(\dsig,\dsig)=(A\dsig,\dsig)
+\langle  \tau_2^{-1} [\dsig], [\dsig]\rangle\ge c\|\dsig\|_{\rm div, h}^2.%{c\over \rho_1^{-1}+1}
\end{equation}
This means that $a_D(\cdot, \cdot)$ is coercive on ${ \mathbb K}$. %$N(B)$. 
By Theorem 4.3.1 in \cite{boffi2013mixed}, a  combination of \eqref{div1} and \eqref{div2} leads to the wellposedness of the two-field formulation \eqref{LDGXG}, and completes the proof.
\end{proof}
{
\begin{remark}
Note that the norm $\|\cdot\|_{\rm div, h}$ defined in \eqref{divnormdef} and the constants in \eqref{div1} and \eqref{div2} do not depend on 
the Poisson's ratio $\nu$. 
Hence by Theorem \ref{Th:inf-supDiv}, the proposed formulation \eqref{XGdiv} under Assumptions (D1)--(D3) is locking-free. 
\end{remark}
}
\begin{remark}\label{remarkdiv}
For the case $\tau_1=0$, the third equation in \eqref{XGdiv} implies that $\hsig=0$, therefore $\|\hsig\|_{0, h}=0$. The error estimates in Theorem \ref{Th:inf-supDiv} still holds for this case.

For the case $\tau_2=0$, the last equation in \eqref{XGdiv} implies that $\check P^u_h [\dsig]=0$. The corresponding discrete space for $\dsig$ becomes
$$
Q_h^M = \{\dtau\in Q_h:  \langle  [\dtau], \hv \rangle_{e }=0,\ \forall \hv\in \check V_h\},
$$
and the norm for $\dtau$ reduces to
{
$$
\|\dtau\|_{\rm div, h}^2 =\|\dtau\|_0^2
+ \|\divh_h \dtau\|_0^2.
$$
}
For this case, $\dsig$, $\du$ and $\hsig$ are unique for the four-field formulation \eqref{XGdiv}. The error estimates  \eqref{div_stability}, \eqref{div_optimal_error} and \eqref{div_error} in Theorem \ref{Th:inf-supDiv} also hold for this case.
\end{remark}
%\begin{remark}
%Theorem \ref{Th:inf-supDiv} leads to the error analysis of $H({\rm div})$-based four-field formulation \eqref{XGdiv} and the $H({\rm div})$-based three-field formulation \eqref{XGH} in Theorem  \ref{Th:inf-supDiv} and  Theorem \ref{Th:H}, respectively.  
%\end{remark} 

Let $\mathbb{M}$ be the space of real matrices of size $n\times n$. Given $\bm \sigma_h$ and $\hat {\bm \sigma}_h$, define a matrix-valued function $\ttsig\in \mathcal{P}_{k+1}(K ; \mathbb{M})$:
\begin{equation}\label{def:PiSig}
\begin{array}{ll}
\int_{e}\left(\ttsig -\tsig\right) n \cdot p_{k+1} ds=0, & \forall p_{k+1} \in \mathcal{P}_{k+1}\left(e ; \mathbb{R}^{n}\right), \\
\int_{K}\left(\ttsig -\dsig\right): \nabla p_{k} dx=0, & \forall p_{k} \in \mathcal{P}_{k}\left(K ; \mathbb{R}^{n}\right), \\
\int_{K}\left(\ttsig -\dsig \right): \boldsymbol{p}_{k+1} dx=0, & \forall \boldsymbol{p}_{k+1} \in  \Phi_{k+1}(K),
\end{array}
\end{equation}
where
$
\Phi_{k+1}(K)=\left\{\dtau \in \mathcal{P}_{k+1}(K ; \mathbb{M}): \operatorname{div} \dtau=0,\left.\dtau \bn\right|_{\partial K}=0\right\}.
$
%Here, the $\nabla$ is regarded as the row-wise operator, i.e.,
%\[
%\nabla p=\left(\begin{array}{c}
%\left(\nabla p_{1}\right)^{t} \\
%\vdots \\
%\left(\nabla p_{d}\right)^{t}
%\end{array}\right), \quad p=\left(p_{1}, \cdots, p_{d}\right)^{t}.
%\]

Define the following space
\[
\mathrm{BDM}_{k+1}^{n \times n}:=\left\{\boldsymbol{\tau} \in H(\operatorname{div}, \Omega ; \mathbb{M}):\left.\boldsymbol{\tau}\right|_{K} \in \mathcal{P}_{k+1}(K ; \mathbb{M}),\ \forall K \in \mathcal{T}_{h}\right\},
\]
and the norm
$$
\|\dtau\|_A^2 = (A\dtau, \dtau),\quad \forall \dtau \in L^2(\Omega, \S).
$$
There exists the following estimate in \cite{wang2020mixed}.
\begin{lemma}\label{lm:l2}
The matrix-valued function $\ttsig \in \mathrm{BDM}_{k+1}^{n \times n}$ in \eqref{def:PiSig} is well defined and 
\begin{equation}
\left\|\ttsig -\dsig \right\|_{0, K} \lesssim h_{K}^{1 / 2} \| (\tsig -\dsig)\bn\|_{\partial K}.
\end{equation}
Furthermore, there exists a matrix-valued function $\tttau \in \mathrm{BDM}_{k+1}^{n \times n}$ such that $\dsig^\ast:= \ttsig +\tttau \in H(\operatorname{div}, \Omega, \S),$ and
\[
\operatorname{div} \tttau=0 \text { and }\left\|\tttau\right\|_{0} \lesssim\left\|\dsig -\ttsig\right\|_{0}.
\]
\end{lemma}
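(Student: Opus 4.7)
The plan is to establish Lemma \ref{lm:l2} in three stages: unisolvence of the local degrees of freedom defining $\ttsig$, a scaling estimate for $\|\ttsig-\dsig\|_{0,K}$, and the construction of a divergence-free symmetry-restoring correction $\tttau$.

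\textbf{Step 1 (Unisolvence).} I would verify that the three sets of conditions in \eqref{def:PiSig} form a unisolvent set on $\mathcal{P}_{k+1}(K;\mathbb{M})$. This is essentially the matrix-valued (row-wise) analogue of the standard BDM element. After checking that the total number of conditions equals $\dim \mathcal{P}_{k+1}(K;\mathbb{M})$, one assumes $\hat \sigma_h=0$ and $\sigma_h=0$ and shows $\ttsig=0$: vanishing of the first set implies $\ttsig \bn|_{\partial K}=0$, vanishing of the second set together with integration by parts yields $\mathrm{div}\,\ttsig \in \{0\}$ (tested against $\mathcal{P}_k(K;\mathbb R^n)$), and vanishing of the third set removes the divergence-free bubble component $\Phi_{k+1}(K)$. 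The standard BDM decomposition of $\mathcal{P}_{k+1}(K;\mathbb M)$ into gradients of $\mathcal{P}_{k+2}$, divergence-free interior bubbles and boundary modes then forces $\ttsig=0$.

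\textbf{Step 2 (Scaling estimate).} Since $\dsig|_K\in \mathcal{P}_{k+1}(K;\mathbb{S})\subset \mathcal{P}_{k+1}(K;\mathbb M)$, the difference $\ttsig-\dsig$ lies in $\mathcal{P}_{k+1}(K;\mathbb M)$, and its interior degrees of freedom from the second and third equations of \eqref{def:PiSig} vanish identically. Only the face moments are nonzero, and they satisfy
\begin{equation*}
\int_e (\ttsig-\dsig)\bn \cdot p_{k+1}\,ds
=\int_e (\tsig-\dsig)\bn \cdot p_{k+1}\,ds,\qquad \forall p_{k+1}\in \mathcal{P}_{k+1}(e;\R^n).
\end{equation*}
By the equivalence of the DOF-induced norm and the $L^2$-norm on the reference element, followed by affine scaling (volume scales as $h_K^n$, face $L^2$-norm as $h_K^{n-1}$), I would conclude
\begin{equation*}
\|\ttsig-\dsig\|_{0,K}^2 \lesssim h_K^{n}\cdot h_K^{-(n-1)}\|(\tsig-\dsig)\bn\|_{0,\partial K}^2 = h_K\|(\tsig-\dsig)\bn\|_{0,\partial K}^2,
\end{equation*}
which is exactly the stated estimate. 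Normal continuity across interior faces is inherited from the single-valuedness of $\tsig\bn$ in \eqref{def:PiSig}, so $\ttsig\in \mathrm{BDM}_{k+1}^{n\times n}$.

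\textbf{Step 3 (Symmetry-restoring correction).} Since $\dsig$ is symmetric, $\mathrm{skew}(\ttsig)=\mathrm{skew}(\ttsig-\dsig)$. The task is to find $\tttau\in \mathrm{BDM}_{k+1}^{n\times n}$ with $\mathrm{div}\,\tttau=0$ and $\mathrm{skew}(\tttau)=-\mathrm{skew}(\ttsig)$, with $\|\tttau\|_0\lesssim \|\ttsig-\dsig\|_0$. I would invoke the discrete elasticity complex associated with $\mathrm{BDM}_{k+1}^{n\times n}$ (in the spirit of Arnold--Falk--Winther and the nonconforming element of \cite{wu2017interior}): the restricted skew-symmetric map
\begin{equation*}
\mathrm{skew}:\{\btau\in \mathrm{BDM}_{k+1}^{n\times n}:\mathrm{div}\,\btau=0\}\;\longrightarrow\; \mathbb{K}_h
\end{equation*}
is surjective onto a discrete skew space $\mathbb{K}_h$ containing $\mathrm{skew}(\ttsig)$, and admits a mesh-independent bounded right inverse. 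Applying this right inverse to $-\mathrm{skew}(\ttsig)$ produces the desired $\tttau$ with
\begin{equation*}
\|\tttau\|_0\lesssim \|\mathrm{skew}(\ttsig)\|_0 =\|\mathrm{skew}(\ttsig-\dsig)\|_0\leq \|\ttsig-\dsig\|_0.
\end{equation*}

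The main obstacle is this last step: establishing the surjectivity of $\mathrm{skew}$ on divergence-free $\mathrm{BDM}$ tensors together with an $h$-uniform right inverse. This is where a discrete elasticity complex (or, equivalently, a Stokes-type inf-sup argument with the skew part playing the role of the pressure) is genuinely needed; the first two steps are routine scaling and unisolvence. I would verify the required inf-sup condition on a reference element and transport it by standard Piola-type arguments, exactly as in the analysis supporting \cite{wang2020mixed}.
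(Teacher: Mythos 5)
First, a point of reference: the paper does not actually prove Lemma \ref{lm:l2}; it is quoted from \cite{wang2020mixed}, so your attempt must be measured against the proof there (which in turn rests on the symmetry-correction construction of Gopalakrishnan and Guzm\'an \cite{gopalakrishnan2011symmetric}). Your Steps 1 and 2 are correct and coincide with that proof: \eqref{def:PiSig} is precisely the row-wise $\mathrm{BDM}_{k+1}$ interpolation, the difference $\ttsig-\dsig$ has vanishing interior moments by construction, its face moments reduce to those of $(\tsig-\dsig)\bn$, and the standard reference-element norm equivalence plus affine scaling gives $\|\ttsig-\dsig\|_{0,K}\lesssim h_K^{1/2}\|(\tsig-\dsig)\bn\|_{0,\partial K}$, with normal continuity of $\ttsig$ inherited from the single-valuedness of $\tsig\bn$.

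The genuine gap is Step 3, and it is exactly where you flag the difficulty. The claim that $\mathrm{skew}$ maps the divergence-free subspace of $\mathrm{BDM}_{k+1}^{n\times n}$ onto a space $\mathbb{K}_h$ containing $\mathrm{skew}(\ttsig)$ with an $h$-uniformly bounded right inverse \emph{is} the substantive content of the second half of the lemma, and your proposed route to it --- verify an inf-sup condition on a reference element and transport it by Piola-type maps --- would not go through as stated: the constraints $\operatorname{div}\tttau=0$ and global $H(\operatorname{div})$-conformity are not element-local, so neither the constraint set nor the inf-sup constant transports element by element under Piola mappings; a global Stokes-type inf-sup argument would also leave you with only an approximate, not exact, cancellation of the skew part, whereas $\ttsig+\tttau\in H(\operatorname{div},\Omega,\S)$ requires $\mathrm{skew}(\tttau)=-\mathrm{skew}(\ttsig)$ identically. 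You also never verify the asserted membership $\mathrm{skew}(\ttsig)\in\mathbb{K}_h$, i.e.\ that this skew field satisfies the inter-element compatibility conditions needed for it to be in the range; this is where the face degrees of freedom in \eqref{def:PiSig} and the identity $\mathrm{skew}(\ttsig)=\mathrm{skew}(\ttsig-\dsig)$ (valid since $\dsig$ is symmetric, and which you do use for the norm bound) must enter. The proof underlying \cite{wang2020mixed} avoids a global inf-sup altogether: it invokes the explicit, local construction of \cite{gopalakrishnan2011symmetric}, which, via a matrix bubble and the exactness of a discrete elasticity sequence, produces a divergence-free $\tttau$ with prescribed skew part and a mesh-independent bound $\|\tttau\|_0\lesssim\|\mathrm{skew}(\ttsig)\|_0$ directly, the locality of the construction being what makes the constant uniform in $h$. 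So your architecture is the right one, but the key step is asserted rather than proved, and the mechanism you propose for it would fail.
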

Similar to the analysis in \cite{wang2020mixed}, there exists the following $L^2$ error estimate of the discrete stress tensor for the XG formulation. 
\begin{theorem}\label{Th:sigL2div}
Let $\csig\in H^{k+2}(\Om, \S)$ and $u\in H^{k+1}(\Om, \R^n) ( k\ge n)$ be the solution of \eqref{model} and $(\dsig, \hsig, \du, \hu)\in Q_h^{k+1}\times \check Q_{h}^k\times V_h^{k}\times \check V_{h}^{k+1}$ be the solution of \eqref{XGdiv}. Under Assumptions (D1)--(D3), it holds that
\begin{equation}
\|\csig - \dsig\|_A\le h^{k+2}(|\csig|_{k+2} + |u|_{k+1}).
\end{equation} 
\end{theorem}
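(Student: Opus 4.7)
The plan is to follow the duality-based post-processing argument of \cite{wang2020mixed}, which converts the energy estimate \eqref{div_error} of order $h^{k+1}$ into an $A$-norm estimate of order $h^{k+2}$ by exploiting an $H(\divh)$-conforming symmetric reconstruction of the discrete stress.

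First, I would split the error as
$$
\|\csig-\dsig\|_A \le \|\csig-\dsig^{\ast}\|_A + \|\dsig^{\ast}-\dsig\|_A,
$$
where $\dsig^{\ast}=\ttsig+\tttau\in H(\divh,\Om,\S)$ is the symmetric, globally $H(\divh)$-conforming reconstruction supplied by Lemma \ref{lm:l2}. The second term is controlled directly by that lemma: since $\|\tttau\|_{0}\lesssim\|\dsig-\ttsig\|_{0}$ and $\|\ttsig-\dsig\|_{0,K}\lesssim h_{K}^{1/2}\|(\tsig-\dsig)\bn\|_{\partial K}$, the decomposition $\tsig-\dsig=(\{\dsig\}-\dsig)+[\dsig]\gamma^T+\hsig$ from \eqref{hatdef} reduces the task to boundary jump quantities, each of which is already controlled by $\|\csig-\dsig\|_{\rm div,h}+\|\hsig\|_{0,h}\lesssim h^{k+1}$. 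Combined with the additional $h^{1/2}$ from trace scaling and the equivalence of $\|\cdot\|_A$ with $\|\cdot\|_{0}$, this yields $\|\dsig^{\ast}-\dsig\|_A\lesssim h^{k+2}(|\csig|_{k+2}+|u|_{k+1})$.

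Next, to bound $\|\csig-\dsig^{\ast}\|_A$, I would set up a dual elasticity problem: find $(\boldsymbol{\varphi},\boldsymbol{\zeta})$ solving $A\boldsymbol{\varphi}-\epsilon(\boldsymbol{\zeta})=A(\csig-\dsig^{\ast})$ with $\divh\boldsymbol{\varphi}=0$ and homogeneous boundary conditions compatible with \eqref{model}, so that elliptic regularity yields $\|\boldsymbol{\varphi}\|_{1}+\|\boldsymbol{\zeta}\|_{2}\lesssim\|\csig-\dsig^{\ast}\|_A$. Because $\csig-\dsig^{\ast}$ is symmetric and globally $H(\divh)$-conforming, integration by parts against $\boldsymbol{\zeta}$ produces no jump terms, giving
$$
\|\csig-\dsig^{\ast}\|_A^{2}=(\csig-\dsig^{\ast},\epsilon(\boldsymbol{\zeta}))=-(\divh(\csig-\dsig^{\ast}),\boldsymbol{\zeta}).
$$
I would then invoke consistency of \eqref{XGdiv}, the defining orthogonalities \eqref{def:PiSig} of $\ttsig$, and a Hu--Zhang symmetric conforming interpolant of $\boldsymbol{\varphi}$ of degree $k+1$ (which exists precisely when $k\ge n$, explaining the hypothesis on $k$) to absorb all residuals and extract an extra factor of $h$, producing $\|\csig-\dsig^{\ast}\|_A\lesssim h^{k+2}(|\csig|_{k+2}+|u|_{k+1})$.

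The main obstacle lies in the duality step: verifying that after testing the error equation against the symmetric conforming interpolant of $\boldsymbol{\varphi}$, every residual boundary and volume term either vanishes by virtue of the orthogonality conditions \eqref{def:PiSig} (especially the orthogonality of $\ttsig-\dsig$ against $\nabla\mathcal{P}_{k}$ and against $\Phi_{k+1}(K)$) or carries the necessary extra order $h$ coming from trace scaling of the jumps $\hsig$, $[\dsig]$ and $[\du]$. The property $\divh\tttau=0$ from Lemma \ref{lm:l2} is essential here, since it forces $\divh\dsig^{\ast}=\divh\ttsig$ and hence controls the pairing of $\divh(\csig-\dsig^{\ast})$ with the polynomial test functions used to handle the dual variable $\boldsymbol{\zeta}$. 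The restriction $k\ge n$ is inherited from the availability of Hu--Zhang symmetric conforming elements and appears unavoidable with this strategy.
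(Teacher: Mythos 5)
Your treatment of the term $\|\dsig-\dsig^\ast\|_A$ is essentially the paper's: Lemma \ref{lm:l2} plus the identification of $(\tsig-\dsig)\bn$ with $\hsig\bn$ and $[\dsig]\bn$, the relation \eqref{hatjumprel}, and the $h^{1/2}$ trace scaling give exactly the paper's bound $\|\dsig-\ttsig\|_0\lesssim h(\|\hsig\|_{0,h}+\|\hu\|_{0,h})\lesssim h^{k+2}$. The gap is in the other half. First, your duality identity is internally inconsistent: from your own dual problem $A\boldsymbol{\varphi}-\epsilon(\boldsymbol{\zeta})=A(\csig-\dsig^\ast)$ one gets $\|\csig-\dsig^\ast\|_A^2=(A\boldsymbol{\varphi},\csig-\dsig^\ast)-(\epsilon(\boldsymbol{\zeta}),\csig-\dsig^\ast)$, and you have silently dropped the term $(A\boldsymbol{\varphi},\csig-\dsig^\ast)$ — which is precisely the dominant term, and the one whose treatment requires the actual key idea of the proof. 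Second, the regularity claim $\|\boldsymbol{\varphi}\|_1+\|\boldsymbol{\zeta}\|_2\lesssim\|\csig-\dsig^\ast\|_A$ cannot hold: the datum $A(\csig-\dsig^\ast)$ is merely $L^2$ (the reconstruction $\dsig^\ast$ is only $H(\divh)$-conforming, not $H^1$), so $\boldsymbol{\varphi}=C\epsilon(\boldsymbol{\zeta})+(\csig-\dsig^\ast)$ inherits no $H^1$ smoothness, and even the $H^2$ bound on $\boldsymbol{\zeta}$ would need convexity/full elliptic regularity of $\Omega$ — an assumption the paper never makes. The phrase ``absorb all residuals'' conceals the step that cannot be completed along this route as written.

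The paper's proof needs no duality and no regularity of a dual problem. It proceeds directly: (i) by the second equation of \eqref{XGdiv} and the construction \eqref{def:PiSig}, $(\divh\dsig^\ast,\dv)=(f,\dv)$ for all $\dv\in V_h$; (ii) with the Hu--Zhang interpolant $\Pi_h^c\csig$ (this is where $k\ge n$ enters, as you correctly guessed) and the commuting property, the field $\dtau=\Pi_h^c\csig-\dsig^\ast$ lies in $H(\divh,\Omega,\S)$ and satisfies $\divh\dtau=0$, using $\divh_h Q_h\subset V_h$; (iii) subtracting the consistent first equation of the scheme from its discrete counterpart and testing with this $\dtau$ yields the orthogonality
\begin{equation*}
(A(\csig-\dsig),\dtau)=\langle u-\tu,[\dtau]\rangle-(u-\du,\divh\dtau)=0,
\end{equation*}
since $[\dtau]=0$ and $\divh\dtau=0$; (iv) hence $\|\csig-\dsig\|_A\le\|\csig-\Pi_h^c\csig\|_A+\|\dsig^\ast-\dsig\|_A$, and both terms are of order $h^{k+2}$. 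If you wish to salvage a duality argument, step (iii) is still unavoidable — you would need to test the error equation with the conforming, divergence-free interpolant of the dual stress and you would additionally require $\boldsymbol{\varphi}\in H^1$, which fails for $L^2$ data — so the direct argument is both simpler and strictly more general.
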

\begin{proof}
Recall the following $H({\rm div})$ four-field formulation \eqref{elemhat} and \eqref{bdconstrain}
\begin{equation} 
\left\{
\begin{array}{rll}
(A\dsig,\dtau)_{0, K}
+(\du, \divh_h \dtau)_{0, K}
-\langle \hat u_{h}, \dtau\bn \rangle_{0, \partial K}
&=0,&\ \forall \dtau \in Q_h,\\
-(\dsig, \epsilon_h (\dv))_{0, K}
+\langle \hat{\csig}_{h}\bn, \dv\rangle_{0, \partial K}
&=(f,\dv)_{0, K},&\ \forall \dv\in V_h,\\
\langle    \hsig+ \tau[\du], \htau \rangle_{e }
&=0,&\forall  \htau \in \check{Q}_{h},
\\
\langle  \hu + \eta[\dsig], \hv \rangle_e
&=0,&\forall  \hv \in \check{V}_{h}.
\end{array}
\right.
\end{equation}
with $\tsig =  \{\dsig\} + [\dsig]\gamma^T +\hsig$ and $\tu = \{\du\} - (\gamma^T\bn)[\du]\bn +\hu.$ By the second equation in the above equation and the definition of $\ttsig$ in \eqref{def:PiSig},
\begin{equation}\label{divsigstar}
\begin{aligned} 
(f,\dv)=&-(\dsig, \epsilon_h (\dv))+\langle \hat{\csig}_{h}\bn, \dv\rangle_\PcT
=-(\dsig, \nabla_h \dv)+\langle \hat{\csig}_{h}\bn, \dv\rangle_\PcT
\\
=&-(\ttsig, \nabla_h \dv)+\langle \ttsig\bn, \dv\rangle_\PcT
=(\divh \ttsig, \dv)=(\divh \dsig^*, \dv).
\end{aligned} 
\end{equation}
When $k\ge n$, there exists a projection { $\Pi_h^c: H^{1}(\Omega, \S)\rightarrow Q_h\cap H(\rm div,  \Omega, \S)$, see Remark 3.1 in \cite{hu2014finite} for reference, such that
\begin{equation}
\begin{aligned} 
(\divh (\ctau -\Pi_{h}^{c} \ctau), \dv)_{\Omega} &=0 & & \hbox{for any}~ \dv\in V^k_h,
\\
\left\|\ctau - \Pi_{h}^{c} \ctau \right\|_{0, \Omega} & \lesssim h^{k+2}|\ctau |_{k+2, \Omega} & & \hbox{if}~ \ctau \in H^{k+2}(\Omega, \S).
\end{aligned}
\end{equation}
}
It follows from \eqref{divsigstar} and Lemma \ref{lm:l2} that
\begin{align} 
(\divh (\dsig^* -\Pi_{h}^{c} \csig ), \dv)&=(\divh (\ttsig -  \csig ), \dv)+ (\divh \tttau, \dv)=0 
\end{align}
Let $\dtau=\Pi_h^c \csig - \dsig^*\in H(\operatorname{div}, \Omega, \S)$. According to Assumption (D1), $\divh_hQ_h\subset V_h$. Thus, 
$$
\divh \dtau=0.
$$ 
It follows  from {\eqref{hatdef}, \eqref{XGdiv}} and $\dtau\in H(\operatorname{div}, \Omega, \S)$ that
\begin{equation}
\begin{aligned} 
(A(\csig - \dsig), \dtau) =& \langle u - \tu, \dtau\bn\rangle_{\PcT} -  (u - \du, \divh \dtau)=  \langle {u-\tu}, [\dtau]\rangle=0.
\end{aligned}
\end{equation}
Since
\begin{align} 
(A(\csig - \dsig), \csig - \dsig) =&\nonumber(A(\csig - \dsig), \csig - \Pi_h^c \csig) + (A(\csig - \dsig), \dtau) + (A(\csig - \dsig), \dsig^* - \dsig)
\\
=&(A(\csig - \dsig), \csig - \Pi_h^c \csig) +  (A(\csig - \dsig), \dsig^* - \dsig),
\end{align}
we have
\begin{equation}\label{eq:l21}
\begin{aligned}  
\|\csig - \dsig\|_A \le &\|\csig - \Pi_h^c \csig\|_A + \|\dsig^* - \dsig\|_A 
\le \|\csig - \Pi_h^c \csig\|_A + \|\tttau\|_A + \|\ttsig - \dsig\|_A 
\\
\lesssim & \|\csig - \Pi_h^c \csig\|_0 +  \|\dsig -\ttsig\|_0.
\end{aligned}
\end{equation}
A combination of \eqref{hatjumprel} and  Lemma \ref{lm:l2} leads to
\begin{equation}\label{eq:l22}
\begin{aligned}
 \|\dsig -\ttsig\|_0 &\lesssim \| h_{K}^{1 / 2}(\tsig -\dsig) n \|_{\PcT}\lesssim   h^{1 / 2} {(\|\hsig\bn \|_{\cE}  + \| [\dsig]\bn\|_\cE)}
 \\
 &\lesssim h\|\hsig\|_{0, h} +  h\|\hu\|_{0, h}.
 \end{aligned}
\end{equation}
It follows from \eqref{eq:l21} and \eqref{eq:l22} that
$$
\|\csig - \dsig\|_A \lesssim h^{k+2}(|\csig|_{k+2} + |u|_{k+1}),
$$
which completes that proof.
\end{proof}

It needs to point out that the above two {discretizations} \eqref{XGgrad} and \eqref{XGdiv} are mathematically equivalent under the same choice of discrete spaces and parameters. But these two {discretizations} behave differently under different assumptions (G1)--(G3) or (D1)--(D3).  {discretizations} under Assumptions (G1)--(G3) are more alike $H^1$-based {methods} and those under Assumptions (D1)--(D3) are more alike $H({\rm div})$-based {methods}. According to these two sets of assumptions, the parameter $\tau$ in \eqref{bdconstrain}  can tend to infinity in {an} $H^1$-based formulation, but not in {an} $H({\rm div})$-based formulation, while the parameter $\eta$ can tend to infinity in {an} $H({\rm div})$-based formulation, but not in {an} $H^1$-based formulation. In the rest of this paper, we will use \eqref{XGgrad} whenever {an} $H^1$-based formulation is {considered}, and \eqref{XGdiv} for {an} $H({\rm div})$-based formulation.  

\section{Variants of the four-field formulation}\label{sec:variants} 

Note that the last two equations in  \eqref{XGgrad}  and  \eqref{XGdiv} reveal the relations \eqref{hatjumprel} between $\hsig$, $\hu$ and $[\du]$, $[\dsig]$, respectively. In the four-field formulation \eqref{XGgrad} and \eqref{XGdiv}, we can eliminate one or some of the four variables and obtain several reduced formulations as discussed below.

\subsection{Three-field formulation without the variable $\hsig$}
The relations \eqref{hatdef} and \eqref{hatjumprel} imply that 
\begin{equation}\label{SigHDG}
\tsig =  \{\dsig\} + [\dsig]\gamma^T   - \tau_1  \check{P}_h^\sigma [\du].
\end{equation}  
A substitution of \eqref{SigHDG} into the four-field formulation \eqref{XGdiv} gives the following three-field formulation without the variable $\hsig$  which seeks $(\dsig, \du,  \hu)\in Q_h \times V_h \times \check{V}_{h}$ such that
\begin{equation}\label{XGH}
\left\{
\begin{array}{rll}
(A\dsig,\dtau)_{0, K}
+(\du, \divh_h \dtau)_{0, K}
-\langle \hat u_{h}, \dtau\bn \rangle_{0, \partial K}
&=0,&\ \forall \dtau \in Q_h,
\\
-(\dsig, \epsilon_h (\dv))_{0, K}
+\langle \hat{\csig}_{h}\bn, \dv\rangle_{0, \partial K}
&=(f,\dv)_{0, K}, &\ \forall \dv\in V_h,
\\ 
\langle\tau_2  \hu +  [\dsig], \hv \rangle_e
&=0,&\forall  \hv \in \check{V}_{h},
\end{array}
\right.
\end{equation}
with $\tu$ and $\tsig$ defined in \eqref{hatdef} and \eqref{SigHDG}, respectively.  

%Although the parameter $\tau$ in the four-field formulation \eqref{XG} has to be nonzero, $\tau$ in this three-field formulation \eqref{XGH} can be zero if the discrete spaces and parameters satisfying the Assumption (D1), (D2) and (D3') with 
%\begin{enumerate}
%\item[(D3')] $\tau=\rho_1 h_e$, $\eta=\rho_2 h_e^{-1}$ and there exist positive constants $C_1$, $C_2$ and  $C_3$ such that
%$$
%0\leq \rho_1\leq C_1,\quad \rho_2\ge C_2,\quad 0\leq \gamma\leq C_3.
%$$ 
%\end{enumerate}

The equivalence between the four-field formulations \eqref{XGgrad}, \eqref{XGdiv} and the three-field formulation \eqref{XGH} gives the following optimal error estimates.
\begin{theorem}\label{Th:H}
 There exist the following properties:
\begin{enumerate}
\item  Under Assumptions (G1)--(G3), the $H^1$-based formulation \eqref{XGH} is uniformly well-posed with respect to 
mesh size,  $\rho_1$ and $\rho_2$.  Let $(\dsig,\du, \hu)\in Q_h\times V_h\times \check V_{h}$ be the solution of \eqref{XGH}. There exists
\begin{equation} 
\|\dsig\|_{0, h}+ \|\du\|_{1, h} +\|\hu\|_{0, h}\lesssim \|f\|_{-1,h}.
\end{equation}
If $\csig\in H^{k+1}(\Om, \S)$, $u\in H^{k+2}(\Om, \R^n) ( k\ge 0 )$, let $(\dsig,  \du, \hu)\in Q_h^k \times V_h^{k+1}\times \check V_{h}^k$ be the solution of \eqref{XGH}, then we have the following error estimate:
\begin{equation} 
\|\csig-\dsig\|_{0, h} + \|u-\du\|_{1, h} +\|\hu\|_{0, h}\lesssim h^{k+1}(|\csig|_{k+1} + |u|_{k+2}).
\end{equation}
\item Under Assumptions (D1)--(D3), the $H({\rm div})$-based formulation \eqref{XGH} is uniformly well-posed with respect to 
mesh size,  $\rho_1$ and $\rho_2$.  Let $(\dsig, \du, \hu)\in Q_h\times  V_h\times \check V_{h}$ be the solution of \eqref{XGH}. There exists
\begin{equation} 
\|\dsig\|_{\rm div, h} + \|\du\|_{0, h} +\|\hu\|_{0, h}\lesssim \|f\|_0
\end{equation}
If $\csig\in H^{k+2}(\Om, \S)$, $u\in H^{k+1}(\Om, \R^n) ( k\ge0 )$, let $(\dsig, \du, \hu)\in Q_h^{k+1} \times V_h^{k}\times \check V_{h}^{k+1}$ be the solution of \eqref{XGH}, then we have the following error estimate:
\begin{equation}\label{div_error}
\|\csig-\dsig\|_{\rm div, h}  + \|u-\du\|_{0, h} + \|\hu\|_{0, h}\lesssim h^{k+1}(|\csig|_{k+2} + |u|_{k+1}).
\end{equation}
Furthermore, if $k\ge n$,
\begin{equation}\label{div_error2}
\|\csig - \dsig\|_A\lesssim h^{k+2}(|\csig|_{k+2} + |u|_{k+1}).
\end{equation}
\end{enumerate} 
\end{theorem}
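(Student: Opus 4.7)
The plan is to derive Theorem \ref{Th:H} as a direct corollary of an exact bijective equivalence between the three-field formulation \eqref{XGH} and the four-field formulations \eqref{XGgrad}, \eqref{XGdiv}, after which the stated stability, quasi-optimality, and polynomial-order error bounds transfer from Theorems \ref{Th:inf-supGrad}, \ref{Th:inf-supDiv}, and \ref{Th:sigL2div}. No new inf-sup or continuity arguments should be needed.

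First I would formalize the equivalence. In both the $H^1$-based system \eqref{XGgrad} and the $H({\rm div})$-based system \eqref{XGdiv}, the third equation, tested over all of $\check Q_h$, forces the algebraic identity $\hsig=-\tau\check P_h^\sigma[\du]$, since $\hsig$ itself lies in $\check Q_h$. Substituting this identity into the definition of $\tsig$ in \eqref{hatdef} reproduces exactly the numerical trace \eqref{SigHDG} used in \eqref{XGH}, so any solution $(\dsig,\hsig,\du,\hu)$ of the four-field system yields a solution $(\dsig,\du,\hu)$ of \eqref{XGH}. Conversely, given a solution $(\dsig,\du,\hu)$ of \eqref{XGH}, setting $\hsig:=-\tau\check P_h^\sigma[\du]\in\check Q_h$ produces a four-tuple satisfying all four equations of the corresponding four-field system: equations one and two hold because the $\tsig$ defined by $\hsig$ agrees with \eqref{SigHDG}, equation three holds by construction, and equation four is unchanged.

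Well-posedness of \eqref{XGH} then follows from Theorems \ref{Th:inf-supGrad} and \ref{Th:inf-supDiv} once I observe that the eliminated variable satisfies $\|\hsig\|_{0,h}=\|\tau^{1/2}\check P_h^\sigma[\du]\|_\cE$ in both cases, and this quantity is already controlled by $\|\du\|_{1,h}$ under (G3) and by $\|\du\|_{0,h}$ under (D3). Hence the stability estimates \eqref{stability}, \eqref{div_stability} and the quasi-optimal bounds \eqref{optimal_error}, \eqref{div_optimal_error} transfer to the three-field formulation simply by dropping the $\|\hsig\|_{0,h}$ term from the left-hand side. The polynomial-order error estimates claimed in Theorem \ref{Th:H} then follow by the same standard interpolation arguments that were invoked in the four-field theorems.

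The refined $L^2$ stress estimate in the $H({\rm div})$-case for $k\ge n$ is obtained by applying Theorem \ref{Th:sigL2div} directly to the reconstructed four-tuple $(\dsig,-\tau\check P_h^\sigma[\du],\du,\hu)$, which was just shown to solve \eqref{XGdiv}. The only bookkeeping I would verify is that the discrete spaces in Theorem \ref{Th:H} meet the ambient assumptions: in the $H^1$-based statement the choice $r\ge\max(1,k)$ ensures $\check Q_h^r$ contains piecewise linears, so (G2) holds; in the $H({\rm div})$-based statement the spaces $Q_h^{k+1}$, $V_h^{k}$, $\check V_h^{k+1}$ match (D1)--(D2) directly. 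I do not anticipate a genuine obstacle beyond this bookkeeping, since the whole theorem reduces cleanly to the previously proved four-field results through the equivalence.
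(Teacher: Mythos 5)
Your proposal is correct and takes essentially the same route as the paper: the paper proves Theorem \ref{Th:H} precisely by invoking the equivalence between the three-field formulation \eqref{XGH} and the four-field formulations \eqref{XGgrad}, \eqref{XGdiv} --- obtained by eliminating $\hsig$ through the relation $\hsig=-\tau\check P_h^\sigma[\du]$ of \eqref{hatjumprel} and the trace \eqref{SigHDG} --- and then transferring Theorems \ref{Th:inf-supGrad}, \ref{Th:inf-supDiv} and \ref{Th:sigL2div}. Your explicit two-way reconstruction of the four-tuple, the norm identity for the eliminated $\|\hsig\|_{0,h}$, and the bookkeeping that $\check Q_h^r$ with $r\ge\max(1,k)$ satisfies (G1)--(G2) simply spell out details the paper leaves implicit.
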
 

\subsubsection{A special case of the three-field formulation without $\hsig$}
Consider a special case of this three-field formulation \eqref{XGH} with 
\begin{equation}\label{hdgCon}
\tau_2=4\tau_1,\quad \gamma = 0, \quad V_h|_\cE\subset \check V_h\quad V_h|_\cE\subset \check Q_h\bn.
\end{equation} 
It follows from \eqref{SigHDG} that
\begin{equation}\label{errusig}
\langle \tsig \bn , \dv \rangle_\PcT=  \langle\dsig \bn - 2\tau_1 \check P_h^\sigma(\du - \tu ), \dv \rangle_\PcT.
\end{equation}
By eliminating $\tsig$ in \eqref{XGgrad} or \eqref{XGdiv}, we obtain the three-field formulation which seeks $(\dsig, \du,  \hu)\in Q_h \times V_h \times \check{V}_{h}$ such that
\begin{equation} \label{XGHDG0}
\left\{
\begin{array}{rlll}
(A\dsig,\dtau)_{0, K}
+(\du, \divh_h \dtau)_{0, K}
-\langle\tu , \dtau\bn\rangle_{\partial K}
=&0,
&\dtau\in Q_h,
\\
-(\dsig, \epsilon_h(\dv)) _{0, K}
+ \langle  \dsig \bn - 2 \tau_1 \check P_h^\sigma(\du - \tu ), \dv\rangle_{\partial K}
=&(f,\dv),
&\dv\in V_h,
\\
\langle \dsig \bn - 2\tau_1 \check P_h^\sigma(\du - \tu ), \tv \rangle_\PcT
&=0,&\forall  \hv \in \check{V}_{h}.
\end{array}
\right.
\end{equation}  
This reveals the close relation between the three-field formulation \eqref{XGH} and the HDG formulations \cite{fu2015analysis,soon2009hybridizable,chen2016robust,qiu2018hdg}. 
It implies that the special three-field formulation \eqref{XGH} mentioned above is also hybridizable under Assumptions (G1)-(G3). Therefore, the four-field formulation \eqref{XGgrad} or \eqref{XGdiv} with $\tau_2=2\tau_1$ and $\check Q_h\bn=\check V_h$ can be reduced to a one-field formulation with only the variable $\tu$.

Table \ref{tab:HDGexist} lists three HDG methods for linear elasticity problems in the literature and a new  $H({\rm div})$-based method. Since the three-field formulation \eqref{XGHDG0} is equivalent to \eqref{XGgrad} and \eqref{XGdiv}, the new method in Table \ref{tab:HDGexist} is well-posed according to Theorem \ref{Th:inf-supGrad}.
\begin{table}[!htbp] 
\centering
\begin{tabular}{c|ccccccccccccc}
\hline
cases&$\eta$& $\tau$& $\gamma$&$Q_h$ & $\check Q_h$ &$V_h$ & $\check{V}_h$ &
\\ 
\hline
1&$\tau^{-1}$& $\Omega(h_e)$ & 0&$ Q_h^k$ & $\check Q_h^k$&$V_h^{k}$ &
$\check{V}_h^{k}$ &\cite{soon2009hybridizable,fu2015analysis}
\\
2&$\tau^{-1}$& $\Omega(h_e^{-1})$&  0&$ Q_h^k$ & $\check Q_h^k$&$V_h^{k}$ &
$\check{V}_h^k$&\cite{soon2009hybridizable}
\\
3&$\tau^{-1}$& $\Omega(h_e^{-1})$& 0&$ Q_h^{k}$ &$\check Q_h^{k}$ &$V_h^{k+1}$ & $\check{V}_h^{k}$&  \cite{qiu2018hdg,chen2016robust} 
\\\hline
4&$\tau^{-1}$&$\Omega(h_e)$&0&$Q_h^{k+1}$& $\check{Q}_h^{k+1}$  & $V_h^{k}$ &$\check V_h^{k+1}$& new 
\\\hline
\end{tabular}
\caption{Some existing HDG methods and a new HDG method.} 
\label{tab:HDGexist}
\end{table}
%\begin{table}[!htbp]
%\scriptsize
%\centering
%\begin{tabular}{c|ccccccccccccc}
%\hline
%cases&$\eta$& $\tau$& $\gamma$&$Q_h$ & $\check Q_h$ &$V_h$ & $\check{V}_h$ &$\|\csig-\dsig\|_0$  & $\|u-\du\|_0$& $\|\epsilon (u)-\epsilon_h (\du)\|_0$&$\|\divh_h (\csig-\dsig)\|_0$   &
%\\ 
%\hline
%1&$\tau^{-1}$& $\Omega(h_e)$ & 0&$ Q_h^k$ & $\check Q_h^k$&$V_h^{k}$ &
%$\check{V}_h^{k}$ &$k+\frac{1}{2}$ &$k+1$ & $k+\frac{1}{2}$& -&\cite{soon2009hybridizable,fu2015analysis}
%\\
%2&$\tau^{-1}$& $\Omega(h_e^{-1})$&  0&$ Q_h^k$ & $\check Q_h^k$&$V_h^{k}$ &
%$\check{V}_h^k$& $k$& $k+1$ &$k$& -&\cite{soon2009hybridizable}
%\\
%3&$\tau^{-1}$& $\Omega(h_e^{-1})$& 0&$ Q_h^{k}$ &$\check Q_h^{k}$ &$V_h^{k+1}$ & $\check{V}_h^{k}$& $k+1$ & $k+2$&$k+1$  &-&\cite{qiu2018hdg,chen2016robust} 
%\\\hline
%4&$\tau^{-1}$&$\Omega(h_e)$&0&$Q_h^{k+1}$& $\check{Q}_h^{k+1}$  & $V_h^{k}$ &$\check V_h^{k+1}$&$k+1$&$k+1$ &  - &$k+1$& new 
%\\\hline
%\end{tabular}
%\caption{Some existing HDG methods and new HDG methods.} 
%\label{tab:HDGexist}
%\end{table}
\begin{enumerate}
\item The first two HDG methods in this table were proposed in \cite{soon2009hybridizable}, and the first one was then analyzed in \cite{fu2015analysis}. The inf-sup conditions in Theorem \ref{Th:inf-supGrad} and \ref{Th:inf-supDiv} are not optimal for these two cases since the ${\rm degree\ of }\ Q_h$ {equals to the }${\rm degree\ of }\ V_h$.
\item The third one is called the HDG method with reduced stabilization. It was proposed and analyzed { to be a locking-free scheme} in \cite{qiu2018hdg,chen2016robust}. Theorem~\ref{Th:inf-supGrad} provides a brand new proof of the optimal error estimate for this HDG method.
%\item The third one is a standard HDG method proposed in \cite{soon2009hybridizable}. Numerical examples therein showed that the strain tensor converges with order $k$ and the displacement converges with order $k+1$, but there are no theoretical results for the convergence rates.  Theorem \ref{Th:inf-supGrad} offers an optimal error estimate for this standard HDG method.
\item The last one is a new three-field scheme proposed following the $H({\rm div})$-based  formulation \eqref{XGHDG0}. The error estimate for this { locking-free scheme}  is analyzed in Theorem  \ref{Th:H}. Note that the divergence of the stress tensor is approximated by $\divh_h \dsig$ directly in this new $H({\rm div})$-based scheme without any extra post-process as required in $H^1$-based  methods.  
\end{enumerate}
\subsubsection{Hybridization for the $H({\rm div})$-based  formulation \eqref{XGHDG0}}
{ Similar to the hybridization in \cite{qiu2018hdg,chen2016robust}, 
the $H({\rm div})$-based three-field formulation \eqref{XGHDG0} is also hybridizable under Asssumptions (D1)--(D3). 
It can be decomposed into two sub-problems as:}
\begin{enumerate}
\item[(I)] Local problems. For each element $K$, given $\tu \in \check  V_h$, find $(\dsig^K, \du^K)\in Q_h\times V_h$ such that
\begin{equation}\label{XGHDG1}
\left\{
\begin{array}{rll}
(A\dsig^K,\dtau)_K
+(\du^K, \divh \dtau)_K
=&\langle\tu , \dtau\bn\rangle_{\partial K}, 
&\dtau\in Q_h,
\\
(\divh_h\dsig^K, \dv)_K
- \langle 2\tau_1 \du^K, \dv\rangle_{\partial K}
=&(f,\dv)_K
%-\langle (I-\hat P_h^\sigma)g_N, \dv\rangle_{\partial K\cap\Gamma_N}
- \langle 2\tau_1 \tu, \dv\rangle_{\partial K},
&\dv\in V_h.
\end{array}
\right.
\end{equation}
It is easy to see \eqref{XGHDG1} is well-posed. 
Denote $H_Q: \check  V_h\rightarrow Q_h$ and $H_V:\check  V_h\rightarrow V_h$ by 
$$
H_Q(\tu)|_K= \dsig^K\quad\text{ and }\quad  H_V(\tu)|_K= \du^K,
$$
respectively. 
\item[(II)] Global problem. Find $\tu\in \check  V_h$ such that
\begin{equation} \label{XGHDG2}
\langle H_Q(\tu)\bn -  2\tau_1 (H_V(\tu) - \tu), \tv \rangle_\PcT
=0,\quad \tv \in \check  V_h.
\end{equation}
It follows from \eqref{XGHDG1} that
\begin{eqnarray*}
&&(  AH_Q(\tv), H_Q(\tu))_K
+ \langle  H_V(\tv), \divh(H_Q(\tu))\rangle_{\partial K}
=
\langle\tv, H_Q(\tu)\bn \rangle_{\partial K},
\\
&&\langle  2\tau_1(\tu-H_V(\tu)), H_V(\tv)\rangle_{\partial K}
=(f, H_V(\tv))_K- (\divh H_Q(\tu), H_V(\tv))_K.
\end{eqnarray*}
The global problem \eqref{XGHDG2} can be written in the following symmetric positive form
\begin{equation}\label{HDGhybrid}
\begin{split}
(AH_Q(\tu), H_Q(\tv))
+ \langle 2\tau_1 (\tu - H_V(\tu)), \tv-H_V(\tv)\rangle_\PcT
= - (f, H_V(\tv)).
\end{split}
\end{equation}
Since  the original formulation \eqref{XGHDG0} is well-posed, the global problem \eqref{HDGhybrid} is also well-posed. 

\end{enumerate}
 
Suppose Assumptions (D1)--(D3) hold. If the parameter $\tau_1$ is nonzero, the formulation \eqref{XGHDG0} is an $H({\rm div})$-based HDG formulation, and it is hybridizable with only one variable $\tu$ globally coupled in \eqref{HDGhybrid}. 
If the parameter $\tau_1$ vanishes, the formulation \eqref{XGHDG0} is a hybridizable mixed formulation \cite{gopalakrishnan2011symmetric,gong2019new}. 
This implies that the formulation \eqref{XGgrad} or \eqref{XGdiv} with \eqref{hdgCon} can be reduced to a one-field formulation with only the variable $\tu$. 

\subsection{Three-field formulation without the variable $\hu$} 
The relations \eqref{hatdef} and \eqref{hatjumprel} imply that 
\begin{equation}\label{uWG}
\tu = \{\du\} - (\gamma^T\bn)[\du]\bn - \eta_2 \check{P}_h^u[\dsig].
\end{equation}  
Another reduced formulation is resulted from eliminating $\hu$ in the four-field formulation \eqref{XGgrad}  by use of \eqref{uWG}. It seeks $(\dsig, \hsig, \du)\in Q_h \times \check{Q}_{h}\times V_h $ such that
\begin{equation}\label{XGW}
\left\{
\begin{array}{rll}
(A\dsig,\dtau)_{0, K}
-(\epsilon_h(\du), \dtau)_{0, K} 
+\langle \du - \tu, \dtau\bn \rangle_{0, \partial K}
&=0,&\ \forall \dtau \in Q_h,
\\
-(\dsig, \epsilon_h (\dv))_{0, K}
+\langle \hat{\csig}_{h}\bn, \dv\rangle_{0, \partial K}
&=(f,\dv)_{0, K},  &\ \forall \dv\in V_h,
\\ 
\langle \eta_1 \hsig+ [\du], \htau \rangle_{e }
&=0,&\forall  \htau \in \check{Q}_{h},
\end{array}
\right.
\end{equation}
with $\tu$ and $\tsig$ defined in \eqref{uWG} and \eqref{hatdef}, respectively. 
{ The variable  $\hsig$ weakly imposes the $H^1$-continuity of the variable $\du$ in formulation \eqref{XGgrad} or \eqref{XGdiv}. }This makes the three-field formulation \eqref{XGW} more alike primal methods. 
\begin{theorem}\label{Th:W}
 There exist the following properties:
\begin{enumerate}
\item  Under Assumptions (G1)--(G3), the $H^1$-based formulation \eqref{XGW} is uniformly well-posed with respect to 
mesh size,  $\rho_1$ and $\rho_2$.  Let $(\dsig, \hsig, \du)\in Q_h\times \check Q_{h}\times V_h$ be the solution of \eqref{XGW}. There exists
\begin{equation} 
\|\dsig\|_{0, h}+ \|\du\|_{1, h} +\|\hsig\|_{0, h}\lesssim \|f\|_{-1,h}.
\end{equation}
If $\csig\in H^{k+1}(\Om, \S)$, $u\in H^{k+2}(\Om, \R^n) ( k\ge 0 )$, let $(\dsig, \hsig,  \du)\in Q_h^k \times \check Q_{h}^r\times V_h^{k+1}$ be the solution of \eqref{XGW} with $r=\max(1, k)$, then we have the following error estimate:
\begin{equation} 
\|\csig-\dsig\|_{0, h} + \|u-\du\|_{1, h} +\|\hsig\|_{0, h}\lesssim h^{k+1}(|\csig|_{k+1} + |u|_{k+2}).
\end{equation}
\item Under Assumptions (D1)--(D3), the $H({\rm div})$-based formulation \eqref{XGW} is uniformly well-posed with respect to 
mesh size,  $\rho_1$ and $\rho_2$.  Let $(\dsig, \hsig, \du)\in Q_h\times \check Q_{h}\times V_h$ be the solution of \eqref{XGW}. There exists
\begin{equation} 
\|\dsig\|_{\rm div, h} + \|\du\|_{0, h} +\|\hsig\|_{0, h}\lesssim \|f\|_0.
\end{equation}
If $\csig\in H^{k+2}(\Om, \S)$, $u\in H^{k+1}(\Om, \R^n) ( k\ge0 )$, let $(\dsig, \hsig, \du)\in Q_h^{k+1}\times \check Q_{h}^k \times V_h^{k}$ be the solution of \eqref{XGW}, then we have the following error estimate:
\begin{equation}\label{div_error}
\|\csig-\dsig\|_{\rm div, h}  + \|u-\du\|_{0, h} + \|\hsig\|_{0, h}\lesssim h^{k+1}(|\csig|_{k+2} + |u|_{k+1}).
\end{equation}
Furthermore, if $k\ge n$,
\begin{equation}\label{div_error2}
\|\csig - \dsig\|_A\lesssim h^{k+2}(|\csig|_{k+2} + |u|_{k+1}).
\end{equation}
\end{enumerate}
\end{theorem}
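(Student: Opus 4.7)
The plan is to establish Theorem \ref{Th:W} by exhibiting an equivalence between the three-field formulation \eqref{XGW} and the already-analyzed four-field formulations \eqref{XGgrad} and \eqref{XGdiv}, so that the conclusions of Theorems \ref{Th:inf-supGrad}, \ref{Th:inf-supDiv}, and \ref{Th:sigL2div} can be transported directly. First I would record the elimination step: the fourth equation of \eqref{XGgrad} (equivalently, the last of \eqref{XGdiv}) is Nitsche's identity $\langle \hu+\eta \check P_h^u[\dsig], \hv\rangle = 0$, which forces $\hu = -\eta \check P_h^u[\dsig]$. Substituting this into the definition \eqref{hatdef} of $\tu$ produces exactly the numerical trace \eqref{uWG} used in \eqref{XGW}. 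Therefore, if $(\dsig,\hsig,\du,\hu)$ solves \eqref{XGgrad}/\eqref{XGdiv}, then $(\dsig,\hsig,\du)$ solves \eqref{XGW}; conversely, given a solution of \eqref{XGW}, setting $\hu := -\eta \check P_h^u[\dsig]$ recovers a solution of the four-field system. This equivalence is one-to-one, so well-posedness transfers.

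Second, I would transport stability. Under Assumptions (G1)--(G3), the norm $\|\hu\|_{0,h}^2 = \|\eta_2^{-1/2}\hu\|_\cE^2$ of the eliminated variable equals $\|\eta_2^{1/2}\check P_h^u[\dsig]\|_\cE^2$, which is already a summand of $\|\dsig\|_{0,h}$; under Assumptions (D1)--(D3), $\|\hu\|_{0,h}^2 = \|\tau_2^{1/2}\hu\|_\cE^2 = \|\tau_2^{-1/2}\check P_h^u[\dsig]\|_\cE^2$, and after using Assumption (D2) this is controlled by $\|\tau_2^{-1/2}[\dsig]\|_\cE^2$, a summand of $\|\dsig\|_{\rm div,h}$. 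Hence the stability bounds in \eqref{stability} and \eqref{div_stability} restrict without loss to the three-field triple and give the asserted $\|\dsig\|_{0,h}+\|\du\|_{1,h}+\|\hsig\|_{0,h}\lesssim \|f\|_{-1,h}$ and $\|\dsig\|_{\rm div,h}+\|\du\|_{0,h}+\|\hsig\|_{0,h}\lesssim \|f\|_0$, respectively.

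Third, I would transport the error estimates. The quasi-optimal approximations \eqref{optimal_error} and \eqref{div_optimal_error}, together with the one-to-one correspondence between solutions, yield the quasi-optimal bounds for \eqref{XGW}. Standard polynomial interpolation on the spaces $Q_h^k\times \check Q_h^r\times V_h^{k+1}$ (with $r=\max(1,k)$ so that Assumption (G2) holds and (G1) is preserved via $Q_h^k\bn\subset \check Q_h^r$) and on $Q_h^{k+1}\times \check Q_h^k\times V_h^k$ respectively produce the convergence rates \eqref{div_error} in the $H^1$- and $H(\mathrm{div})$-based settings. The enhanced $L^2$ estimate \eqref{div_error2} for $k\ge n$ is inherited verbatim from Theorem \ref{Th:sigL2div} since its proof only uses the equations \eqref{elemhat} and the Nitsche identities, which both hold in the three-field setting once $\hu$ is reintroduced via the equivalence.

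The only point requiring care, and the main potential obstacle, is the compatibility of the reduced trace space $\check Q_h^r$ with Assumption (G1). In the $H^1$-based case one needs $r\ge 1$ so that $\check Q_h^r$ contains piecewise linears (Assumption (G2)) and $r\ge k$ so that $Q_h^k\bn|_\cE\subset \check Q_h^r$; the prescription $r=\max(1,k)$ is precisely the minimal choice satisfying both. Once this is verified, the error analysis proceeds without further modification, and the rest of the argument is a clean reduction to the four-field theorems.
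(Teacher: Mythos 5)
Your proposal is correct and follows essentially the same route as the paper: the paper itself obtains Theorem \ref{Th:W} by the equivalence between \eqref{XGW} and the four-field formulations \eqref{XGgrad}/\eqref{XGdiv} via the elimination $\hu=-\eta\check P_h^u[\dsig]$ from \eqref{uWG} (the same reduction already used to prove Theorem \ref{Th:inf-supGrad} through \eqref{M}), transferring stability, quasi-optimality, and the $k\ge n$ estimate from Theorems \ref{Th:inf-supGrad}, \ref{Th:inf-supDiv}, and \ref{Th:sigL2div}. Your observations that $\|\hu\|_{0,h}$ is absorbed into $\|\dsig\|_{0,h}$ (resp.\ $\|\dsig\|_{\rm div,h}$, where (D2) gives $\check P_h^u[\dsig]=[\dsig]$) and that $r=\max(1,k)$ is the minimal choice compatible with (G1)--(G2) match the paper's reasoning.
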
 
\subsubsection{A special case of three-field formulation without $\hsig$}
For each variable $\batau= (\dtau, \htau)\in Q_h\times \check  Q_h$, define the weak divergence $\divh_w: Q_h\times \check  Q_{h} \rightarrow V_h$ by
\begin{equation}
(\divh_w \batau, w_h)_{0, K}=-(\epsilon_h ( w_h), \dtau)_{0, K} + \langle (\{\dtau\} + \htau)\bn, w_h\rangle_{0, \partial K},\ \forall w_h\in V_h.
\end{equation}
The following lemma presents the relation between a special three-field formulation \eqref{XGW} and the weak Galerkin method.
\begin{lemma}
The formulation \eqref{XGW} with $\eta_1=4\eta_2$, $\gamma=0$, $Q_h\bn|_\cE\subset \check Q_{h}$ and $Q_h\bn|_\cE\subset \check V_{h}$ is equivalent to  the problem that finds $\basig\in Q_h\times \check Q_{h}$ and $\du\in V_h$ such that 
\begin{equation}\label{XGWG}
\left\{
\begin{array}{rll}
(A\dsig,\dtau) + (\divh_w \batau, \du)
+ s(\basig, \batau)
&=0,
&\batau\in Q_h \times \check Q_{h},
\\
(\divh_w \basig, \dv)
&
=(f,\dv),
&\dv\in V_h
\end{array}
\right.
\end{equation}
with $s(\basig, \batau)= \langle  2\eta_2(\tsig - \dsig)\bn, (\ttau - \dtau)\bn\rangle_{\PcT}$ and $\tu$ and $\tsig$ defined in \eqref{uWG} and \eqref{SigHDG}, respectively. 
\end{lemma}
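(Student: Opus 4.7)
My plan is to prove the equivalence by showing that each equation of the weak Galerkin system \eqref{XGWG}---obtained by testing the first equation with $\batau=(\dtau,0)$ and $\batau=(0,\htau)$ separately, together with the second equation---matches a corresponding equation of \eqref{XGW}. Throughout I will use that $\gamma=0$ simplifies the fluxes to $\tsig = \{\dsig\} + \hsig$ and $\tu = \{\du\} - \eta_2 \check P_h^u[\dsig]$, together with the elementary identity $\du^+-\du^- = [\du]\bn^+$ from \eqref{vjumpn}. For the second (momentum) equation, the definition of $\divh_w$ immediately gives $(\divh_w\basig,\dv) = -(\dsig,\epsilon_h(\dv)) + \langle (\{\dsig\}+\hsig)\bn,\dv\rangle_\PcT = -(\dsig,\epsilon_h(\dv)) + \langle \tsig\bn,\dv\rangle_\PcT$, so the second equations of the two formulations coincide verbatim.

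For the first equation of \eqref{XGWG} with test $(\dtau,0)$, I would use $\langle\{\dtau\}\bn,\du\rangle_\PcT = \langle\{\dtau\}\bn,[\du]\bn\rangle$ together with the identity $(\{\dtau\}-\dtau)\bn = -\tfrac12[\dtau]$ on each element side. Evaluating the stabilizer, the $\hsig$-contribution cancels because $\bn^++\bn^-=0$, yielding $s(\basig,(\dtau,0)) = \eta_2\langle[\dsig],[\dtau]\rangle$. On the \eqref{XGW} side, the term $\langle\eta_2\check P_h^u[\dsig],\dtau\bn\rangle_\PcT$ reduces to $\eta_2\langle[\dsig],[\dtau]\rangle$ since $\dtau\bn|_\cE\in\check V_h$ by the inclusion hypothesis, so $\check P_h^u$ acts as the identity. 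Both sides then collapse to the common identity $(A\dsig,\dtau)-(\epsilon_h(\du),\dtau)+\langle\{\dtau\}\bn,[\du]\bn\rangle+\eta_2\langle[\dsig],[\dtau]\rangle=0$. For the test $(0,\htau)$, the $[\dsig]$-piece of the stabilizer again cancels while the two $\hsig$-contributions sum to $4\eta_2\langle\hsig\bn,\htau\bn\rangle$; using $\eta_1=4\eta_2$ and the inclusion $Q_h\bn|_\cE\subset\check Q_h$ to identify normal-trace pairings with tensor pairings, this matches $\eta_1\langle\hsig,\htau\rangle$, while the weak-divergence contribution $\langle\htau\bn,[\du]\bn\rangle$ becomes $\langle\htau,[\du]\rangle$. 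This reproduces the third equation of \eqref{XGW}.

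The main obstacle I anticipate is making rigorous the translation between the two types of edge inner products that appear: the tensor pairing $\hsig:\htau$ natural to \eqref{XGW} and the normal-trace pairing $(\hsig\bn)\cdot(\htau\bn)$ arising from both the definition of $\divh_w$ and the stabilizer in \eqref{XGWG}. The factor of four between $\eta_1$ and $\eta_2$ is precisely what closes this dictionary, and the two inclusion hypotheses on $Q_h\bn|_\cE$ guarantee that the projections $\check P_h^\sigma$ and $\check P_h^u$ act as the identity on the relevant traces. Once this translation is established, the remainder is direct algebraic bookkeeping with the jump/average identities developed in Section~\ref{notation}, and the argument applies in both directions since the test functions $(\dtau,0)$ and $(0,\htau)$ span $Q_h\times\check Q_h$.
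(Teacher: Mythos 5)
The paper states this lemma without any proof, so there is no argument of the authors' to compare yours against; I can only assess your proposal on its own terms. Your overall strategy is the natural one, and your interior-edge algebra is correct: the second equations do coincide verbatim; testing with $(\dtau,0)$ the cross terms in $s$ cancel because $\bn^++\bn^-=0$ and $s(\basig,(\dtau,0))=\eta_2\langle[\dsig],[\dtau]\rangle$; the two-sided sum does produce the factor $4\eta_2\langle\hsig\bn,\htau\bn\rangle$; and the inclusion $Q_h\bn|_\cE\subset\check V_h$ does let $\check P_h^u$ act as the identity on $[\dtau]$.

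The genuine gap is exactly the step you flagged as "the main obstacle" and then asserted rather than proved: the identification of normal-trace pairings with tensor pairings is \emph{false} as an identity, and no choice of constants closes it. For a symmetric $\htau$ and $w=\du^+-\du^-$, a direct computation from \eqref{jumpdef} gives
\begin{equation*}
\int_e [\du]:\htau \, ds \;=\; 2\int_e ([\du]\bn)\cdot(\htau\bn)\, ds \;-\; \int_e (w\cdot\bn)\operatorname{tr}(\htau)\, ds ,
\end{equation*}
so $\langle[\du],\htau\rangle\neq\langle[\du]\bn,\htau\bn\rangle$ in general, and $\langle\hsig,\htau\rangle$ contains tangential--tangential components of $\hsig$ that are completely invisible to $\langle\hsig\bn,\htau\bn\rangle$ (take $\htau=\hsig$ with $\hsig\bn=0$, $\hsig\neq0$). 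Consequently the third equation of \eqref{XGW} determines the full tensor $\hsig=-\eta_1^{-1}\check P_h^\sigma[\du]$, whereas in \eqref{XGWG} the unknown $\hsig$ enters only through $\hsig\bn$ (both in $\divh_w$ and in $s$), so the $(0,\htau)$-equation constrains only the normal trace and \eqref{XGWG} cannot determine the tangential part of $\hsig$ at all. The equivalence can therefore only be meant at the level of $(\dsig,\du)$ and of $\hsig\bn$, and to prove even that you must verify the projection-compatibility identity $(\check P_h^\sigma[\du])\bn = P_{\check Q_h\bn}([\du]\bn)$, where $P_{\check Q_h\bn}$ is the $L^2$-projection onto $\check Q_h\bn|_e$; this follows from orthogonality once one checks that $\tfrac12(\htau\bn\bn^T+\bn\bn^T\htau)\in\check Q_h|_e$ for all $\htau\in\check Q_h$ (true for full symmetric polynomial spaces on flat faces, but not a consequence of the quoted inclusion hypotheses, and not supplied by $\eta_1=4\eta_2$). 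Separately, your computation silently treats all edges as interior: on $\Gamma_D$ the stabilizer is single-sided and contributes $2\eta_2\langle\hsig\bn,\htau\bn\rangle$ rather than $4\eta_2$, and on $\Gamma_N$ the trace terms of the two formulations ($\langle\dtau\bn,\du\rangle$ versus $\eta_2\langle\check P_h^u(\dsig\bn),\dtau\bn\rangle$) do not match termwise, so the boundary bookkeeping requires its own argument using \eqref{bddef} and the constraints built into $\check Q_h$ and $\check V_h$; none of this follows from the interior-edge cancellations.
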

\subsubsection{Hybridization for the three-field  formulation \eqref{XGWG}}
Denote
$$
Z_h = \{\du\in V_h: \epsilon_h (\du)=0\},
$$
$$
V_h^\perp = \{\du\in V_h: (\du, \dv)=0,\ \forall \dv\in Z_h\}.
$$
{ For any $\tsig\in \check Q_h$, denote $\hat\csig_{h,n}|_e=\tsig\bn_e$ and $\hat\csig_{h,t}|_e=\tsig t_e$ where $t_e$ is the unit tangential vector of edge $e$.}
By \eqref{errusig},  the three-field formulation \eqref{XGWG} can be decomposed into two sub-problems as:
\begin{enumerate}
\item[(I)] Local problems. For each element $K$, given ${\hat\csig_{h, n} \in  \hat  Q_{h}\bn}$, find $(\dsig^K, \du^K)\in Q_h\times V_h^\perp$ such that for any $(\dtau, \dv)\in Q_h\times V_h^\perp$
\begin{equation}\label{XGWG1}
\left\{
\begin{array}{rll}
(A\dsig^K,\dtau)_K
- (\epsilon_h (\du^K), \dtau)_K
+\langle  2\eta_2\dsig^K \bn, \dtau\bn\rangle_{\partial K}
&
=\langle  2\eta_2{\hat\csig_{h, n}}, \dtau\bn\rangle_{\partial K},
\\
-(\dsig^K, \epsilon_h (\dv))_K
&
=(f,\dv)_K-\langle  {\hat\csig_{h, n}}, \dv\rangle_{\partial K}.
\end{array}
\right.
\end{equation}
It is easy to see that the local problem  \eqref{XGWG1} is well-posed if $\epsilon_h (V_h)\subset Q_h$.
Denote $W_Q: {\check  Q_h\bn}\rightarrow Q_h$ and $W_V: {\check  Q_h\bn}\rightarrow V_h^\perp$ by 
$$ 
W_Q({\hat\csig_{h, n}})|_K= \dsig^K\quad\text{ and }\quad  W_V({\hat\csig_{h, n}})|_K= \du^K,
$$
respectively. 
\item[(II)] Global problem. Find $\tsig$ such that $({\hat\csig_{h, n}}, \du^0)\in \hat  Q_{h}\times Z_h$ satisfies
\begin{equation} \label{XGWG2}
\left\{
\begin{array}{rll}
\langle  { \hat\csig_{h, n}}, \dv^0\rangle_\PcT,
&
=(f,\dv^0),&\ \forall \dv^0\in Z_h,
\\
\langle 2\eta_2 ({\hat\csig_{h, n}} - W_Q({\hat\csig_{h, n}})\bn) +  W_V({\hat\csig_{h, n}}) +\du^0, {\hat\ctau_{h,n}}\rangle_\PcT
&= 0,&\ \forall {\hat\ctau_{h,n}} \in {\check  Q_{h}n},
\end{array}
\right.
\end{equation}
and { $\hat\csig_{h, t}|_\cE=(\{W_Q({\hat\csig_{h, n}})\} -\eta_1^{-1}[W_V({\hat\csig_{h, n}})])t|_\cE$}.
It follows from \eqref{XGWG1} that
\begin{equation}
\begin{split}
 (AW_Q({\hat\csig_{h, n}}), W_Q({\hat\ctau_{h,n}}))_K
- (\epsilon_h (W_V({\hat\csig_{h, n}})), W_Q({\hat\ctau_{h,n}}))_K&
\\
=\langle   2\eta_2({\hat\csig_{h, n} }-W_Q({\hat\csig_{h, n}})\bn),& W_Q({\hat\ctau_{h,n}})\bn\rangle_{\partial K},
\\
\langle  W_V({\hat\csig_{h, n}}), \ttau\bn\rangle_{\partial K}
- \langle  W_Q({\hat\ctau_{h,n}}), \epsilon_h(W_V({\hat\csig_{h, n}}))\rangle_{\partial K}
&=
(f, W_V({\hat\csig_{h, n}}))_K.
\end{split}
\end{equation}
Thus the second equation in \eqref{XGWG2} can be written as  
\begin{equation}
\begin{split}
\langle \eta_2 ({\hat\csig_{h, n}} - W_Q({\hat\csig_{h, n}})\bn), {\hat\ctau_{h,n}}-W_Q({\hat\ctau_{h,n}})\bn \rangle_\PcT
+\langle \du^0, W_V(\hat\ctau_{h,n}) \rangle_\PcT
= - (f, W_V(\hat\ctau_{h,n})).
\end{split}
\end{equation}
Therefore, the global sub-problem \eqref{XGWG2} seeks $\tsig$ where $(\hat \csig_{h, n}, \du^0)\in \hat  Q_{h}\times Z_h$ 
\begin{equation} \label{XGWG3}
\left\{
\begin{array}{rll}
\langle \eta_2 (\hat \csig_{h, n} - W_Q(\hat \csig_{h, n})\bn), \hat\ctau_{h, n}-W_Q(\hat\ctau_{h,n})\bn \rangle_\PcT
+\langle \du^0,  \hat\ctau_{h, n} \rangle_\PcT
&
=
- (f, W_V(\hat\ctau_{h,n})),
\\
\langle \hat \csig_{h, n}, \dv^0\rangle_\PcT
&
=(f,\dv^0),
\end{array}
\right.
\end{equation}
{ for any $(\hat\ctau_{h,n}, \dv^0) \in \check  Q_{h}n\times  Z_h$, and 
 $\hat\csig_{h, t}|_\cE=(\{W_Q(\hat\csig_{h, n})\} -\eta_1^{-1}[W_V(\hat\csig_{h, n})])t|_\cE$}.
\end{enumerate}

Note that  the  three-field formulation 
is hybridizable under the Assumptions (G1)--(G3) or (D1)--(D3). This implies that  the corresponding four-field formulation \eqref{XGgrad} or \eqref{XGdiv} is hybridizable. 

\subsection{Two-field formulation without the variables $\hsig$ and $\hu$}
{ Recall the two-field formulation  \eqref{LDGXG}  seeks: $(\dsig, \du)\in Q_h \times V_h $ such that }
\begin{equation}\label{XGDG}
\left\{
\begin{array}{rlr}
(A\dsig,\dtau)
+(\du, \divh_h \dtau)
-\langle \tu, \dtau\bn\rangle_{0, \partial K}
&
=0,&\ \forall \dtau \in Q_h,
\\
-(\dsig, \epsilon_h (\dv))
+\langle   \tsig\bn, \dv\rangle_{0, \partial K},
&
=(f,\dv),&\ \forall \dv\in V_h,
\end{array}
\right.
\end{equation}
with
\begin{equation}
\begin{array}{rlr}
\tsig|_e &= \check P_h^\sigma ( \{\dsig\}- \tau [\du] + [\dsig]\gamma^T)\quad &\text{on}\  \cE,\\
\tu|_e &=  \check P_h^u (\{\du\} - \eta [\dsig] - { (\gamma^T\bn)[\du]\bn }) \quad &\text{on}\ \cE.
\end{array}
\end{equation}
It is a
generalization of DG methods \cite{chen2010local,cockburn2000development,arnold2002unified}.

\begin{theorem}\label{Th:D}
 There exist the following properties:
\begin{enumerate}
\item  Under Assumptions (G1)--(G3), the $H^1$-based formulation \eqref{XGDG} is uniformly well-posed with respect to 
mesh size, $\rho_1$ and $\rho_2$.  Let $(\dsig,  \du)\in Q_h \times V_h$ be the solution of \eqref{XGDG}. There exists
\begin{equation} 
\|\dsig\|_{0, h}+ \|\du\|_{1, h} \lesssim \|f\|_{-1,h}.
\end{equation}
If $\csig\in H^{k+1}(\Om, \S)$, $u\in H^{k+2}(\Om, \R^n) ( k\ge 0 )$, let $(\dsig,   \du)\in Q_h^k  \times V_h^{k+1}$ be the solution of \eqref{XGDG}, then we have the following error estimate:
\begin{equation} 
\|\csig-\dsig\|_{0, h} + \|u-\du\|_{1, h}\lesssim h^{k+1}(|\csig|_{k+1} + |u|_{k+2}).
\end{equation}
\item Under Assumptions (D1)--(D3), the $H({\rm div})$-based formulation \eqref{XGDG} is uniformly well-posed with respect to 
mesh size, $\rho_1$ and $\rho_2$.  Let $(\dsig, \hsig, \du)\in Q_h\times \check Q_{h}\times V_h$ be the solution of \eqref{XGDG}. There exists
\begin{equation} 
\|\dsig\|_{\rm div, h} + \|\du\|_{0, h}\lesssim \|f\|_0
\end{equation}
If $\csig\in H^{k+2}(\Om, \S)$, $u\in H^{k+1}(\Om, \R^n) ( k\ge0 )$, let $(\dsig,  \du)\in Q_h^{k+1}\times  V_h^{k}$ be the solution of \eqref{XGDG}, then we have the following error estimate:
\begin{equation}\label{div_error}
\|\csig-\dsig\|_{\rm div, h}  + \|u-\du\|_{0, h} \lesssim h^{k+1}(|\csig|_{k+2} + |u|_{k+1}).
\end{equation}
Furthermore, if $k\ge n$,
\begin{equation}\label{div_error2}
\|\csig - \dsig\|_A\lesssim h^{k+2}(|\csig|_{k+2} + |u|_{k+1}).
\end{equation}
\end{enumerate}
\end{theorem}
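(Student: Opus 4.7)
The plan is to reduce Theorem \ref{Th:D} entirely to the well-posedness and error estimates already established for the four-field formulations \eqref{XGgrad} and \eqref{XGdiv}, by exploiting the elimination that produces the two-field system \eqref{XGDG}. The same strategy used to pass from the four-field formulation to the three-field formulations in Sections 4.1 and 4.2 applies here simultaneously to both multipliers.

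For Part 1 (the $H^1$-based statement), I would first verify that substituting the Nitche-type identities $\hsig=-\tau\check P_h^\sigma[\du]$ and $\hu=-\eta\check P_h^u[\dsig]$ (which come from \eqref{bdconstrain}, as recorded in \eqref{hatjumprel}) into the first two equations of \eqref{XGgrad} produces exactly the two-field system \eqref{XGDG} with the numerical traces $\tsig$ and $\tu$ as prescribed. The key observation is that on each edge the trace pairings involve elements of $\check Q_h$ and $\check V_h$ (via the boundary integrals against $\dtau n$ and $\dv$ which, after assumption (G1), have traces in those spaces), so inserting the projections $\check P_h^\sigma$ and $\check P_h^u$ is consistent and yields the stated form of $\tsig$ and $\tu$. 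Given this equivalence, the solution $(\dsig,\du)$ of \eqref{XGDG} coincides with the first and third components of the solution of \eqref{XGgrad}, and the stability bound, quasi-optimal error estimate, and convergence rate in Part 1 follow directly from \eqref{stability}, \eqref{optimal_error}, and \eqref{error} of Theorem \ref{Th:inf-supGrad}, simply by dropping the $\|\hsig\|_{0,h}$ and $\|\hu\|_{0,h}$ contributions from the left-hand sides.

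For Part 2 (the $H({\rm div})$-based statement), the same elimination applied to \eqref{XGdiv} gives the same two-field system \eqref{XGDG}, so well-posedness, the stability bound, the quasi-optimal estimate, and the rate $\mathcal{O}(h^{k+1})$ in the $\|\cdot\|_{\rm div,h}$ and $\|\cdot\|_{0,h}$ norms follow from \eqref{div_stability}, \eqref{div_optimal_error}, and \eqref{div_error} of Theorem \ref{Th:inf-supDiv}. The sharper $L^2$ rate \eqref{div_error2} for $k\ge n$ is inherited directly from Theorem \ref{Th:sigL2div}, whose proof uses only the $H({\rm div})$-conforming projection $\Pi_h^c$ from \cite{hu2014finite} available when $k\ge n$, together with the bounds on $\|\hsig\|_{0,h}$ and $\|\hu\|_{0,h}$ already controlled by Theorem \ref{Th:inf-supDiv}; since the $(\dsig,\du)$ components agree across the two formulations, no re-derivation is required.

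The only genuine verification that must be carried out is the equivalence between \eqref{XGDG} and the eliminated four-field systems, and this is the main (albeit routine) obstacle: one has to check that the projected traces $\check P_h^\sigma(\{\dsig\}-\tau[\du]+[\dsig]\gamma^T)$ and $\check P_h^u(\{\du\}-\eta[\dsig]-(\gamma^Tn)[\du]n)$ agree, when paired against the admissible test traces of $\dtau n$ and $\dv$, with the unprojected traces in \eqref{hatdef} after substitution of \eqref{hatjumprel}. Under assumptions (G1)--(G3) (respectively (D1)--(D3)) the relevant boundary traces do lie in $\check Q_h$ and $\check V_h$, so the projections are inert in the pairings, which closes the reduction and completes the proof.
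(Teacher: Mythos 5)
Your proposal is correct and follows essentially the same route as the paper: the paper introduces \eqref{XGDG} by recalling that it is exactly the two-field system \eqref{LDGXG} obtained from the four-field formulations via the elimination identities \eqref{hatjumprel}, and Theorem~\ref{Th:D} is then inherited (the paper gives no separate proof) from Theorems~\ref{Th:inf-supGrad}, \ref{Th:inf-supDiv} and \ref{Th:sigL2div} by dropping the multiplier norms, precisely as you argue. Your additional check that the projections $\check P_h^\sigma$ and $\check P_h^u$ are inert in the trace pairings under (G1)--(G3), respectively (D1)--(D3), is the same equivalence the paper asserts when it states that the four-, three- and two-field formulations coincide for the same choice of spaces and parameters.
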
 

Table \ref{tab:LDG1} lists some well-posed $H^1$-based methods and the second method is a new one. 
It shows that the LDG method in \cite{chen2010local} is the first one in Table \ref{tab:LDG1} with $k=1$, $\eta=\gamma=0$ and $\tau=O(h_e^{-1})$. 
%The last formulation in Table \ref{tab:LDG1} is a brand new LDG method. It is equivalent to the second three-field formulation in Table \ref{tab:HDGexist} and the first one in Table \ref{tab:WG}. 
The comparison between the methods in Table \ref{tab:LDG1} implies that the vanishing parameter $\eta$ causes the failure of the hybridization for the method in \cite{chen2010local}.

\begin{table}[!htbp] 
\centering
\begin{tabular}{c|cccccccccc}
\hline
cases&$\eta$& $\tau$& $\gamma$&$Q_h$  & $\check{Q}_h$& $V_h$ &$\check V_h$&
\\ 
\hline
1&0& $\Omega(h_e^{-1})$&0&$ Q_h^k$ & $\check{Q}_h^k$& $V_h^{k+1}$& $\check{V}_h^{k+1}$&\cite{chen2010local}
\\
2&$\mathcal{O}(h_e)$& $\mathcal{O}(h_e^{-1})$&$\mathcal{O}(1)$&$ Q_h^{k}$ & $\check{Q}_h^{k}$& $V_h^{k+1}$& $\check{V}_h^{k}$   & new 
%3&$\tau^{-1}$& $\Omega(h_e^{-1})$&$0$&$ Q_h^{k}$ & $\check{Q}_h^{k}$& $V_h^{k+1}$& $\check{V}_h^{k}$   &new
\\\hline
\end{tabular}
\caption{\footnotesize{$H^1$-based methods for linear elasticity problem.}} 
\label{tab:LDG1}
\end{table}
%\begin{table}[!htbp]
%\footnotesize
%\centering
%\begin{tabular}{c|cccccccccc}
%\hline
%cases&$\eta$& $\tau$& $\gamma$&$Q_h$  & $\check{Q}_h$& $V_h$ &$\check V_h$&$\|\csig-\dsig\|_0$ & $\|\epsilon (u)-\epsilon_h (\du)\|_0$&
%\\ 
%\hline
%1&0& $\Omega(h_e^{-1})$&0&$ Q_h^k$ & $\check{Q}_h^k$& $V_h^{k+1}$& $\check{V}_h^{k+1}$ & $k+1$ & $k+1$  &\cite{chen2010local}
%\\
%2&$\mathcal{O}(h_e)$& $\mathcal{O}(h_e^{-1})$&$\mathcal{O}(1)$&$ Q_h^{k}$ & $\check{Q}_h^{k}$& $V_h^{k+1}$& $\check{V}_h^{k}$ & $k+1$ &$k+1$  & new \\
%3&$\tau^{-1}$& $\Omega(h_e^{-1})$&$0$&$ Q_h^{k}$ & $\check{Q}_h^{k}$& $V_h^{k+1}$& $\check{V}_h^{k}$ & $k+1$ &$k+1$  &new
%\\\hline
%\end{tabular}
%\caption{$H^1$-based formulations for linear elasticity problem.} 
%\label{tab:LDG1}
%\end{table}
 
Table \ref{tab:LDG2} lists the LDG method in \cite{wang2020mixed} and some new $H({\rm div})$-based methods. With the same choice of parameters and discrete spaces, all these methods are well-posed and admit the optimal error estimates for both the displacement and the stress tensor.
It shows that the method induced from the formulation \eqref{XGDG} with $\tau=0$, $\gamma=0$ and $\eta=O(h_e^{-1})$ is equivalent to the LDG method in \cite{wang2020mixed}. 
The last two cases in Table \ref{tab:LDG2} are brand new LDG methods.  
It implies that the vanishing parameter $\tau$ causes the failure of the hybridization for the method in \cite{wang2020mixed}.

\begin{table}[!htbp] 
\centering
\begin{tabular}{c|ccccccccccc}
\hline
cases&$\eta$& $\tau$&$\gamma$& $Q_h$  & $\check{Q}_h$& $V_h$ &$\check V_h$& 
\\ 
\hline
1&$\Omega(h_e^{-1})$& 0&0& $ Q_h^{k+1}$  &
$\check{Q}_h^{k}$& $V_h^{k}$&$\check{V}_h^{k+1}$  &\cite{wang2020mixed}
\\
2&$\mathcal{O}(h_e^{-1})$& $\mathcal{O}(h_e)$& $\mathcal{O}(1)$&$ Q_h^{k+1}$  &
$\check{Q}_h^{k+1}$& $V_h^{k}$&$\check{V}_h^{k+1}$   & new
\\
3&$\tau^{-1}$& $\Omega(h_e)$& $0$&$ Q_h^{k+1}$  &
$\check{Q}_h^{k+1}$& $V_h^{k}$&$\check{V}_h^{k+1}$   &new
\\
\hline 
\end{tabular}
\caption{\footnotesize{$H({\rm div})$-based methods for linear elasticity problem.}}
\label{tab:LDG2}
\end{table} 
%\begin{table}[!htbp]
%\footnotesize
%\centering
%\begin{tabular}{c|ccccccccccc}
%\hline
%cases&$\eta$& $\tau$&$\gamma$& $Q_h$  & $\check{Q}_h$& $V_h$ &$\check V_h$&$\|\csig-\dsig\|_0$ & $\|\divh_h (\csig-\dsig)\|_0$ & $\|u-\du\|_0$&
%\\ 
%\hline
%1&$\Omega(h_e^{-1})$& 0&0& $ Q_h^{k+1}$  &
%$\check{Q}_h^{k}$& $V_h^{k}$&$\check{V}_h^{k+1}$ & $k+1$ &$k+1$ &$k+1$ &\cite{wang2020mixed}
%\\
%2&$\mathcal{O}(h_e^{-1})$& $\mathcal{O}(h_e)$& $\mathcal{O}(1)$&$ Q_h^{k+1}$  &
%$\check{Q}_h^{k+1}$& $V_h^{k}$&$\check{V}_h^{k+1}$ & $k+1$ &$k+1$ &$k+1$ & new
%\\
%3&$\tau^{-1}$& $\Omega(h_e)$& $0$&$ Q_h^{k+1}$  &
%$\check{Q}_h^{k+1}$& $V_h^{k}$&$\check{V}_h^{k+1}$ & $k+1$ &$k+1$ &$k+1$ &new
%\\
%\hline 
%\end{tabular}
%\caption{$H({\rm div})$-based formulations for linear elasticity problem.} 
%\label{tab:LDG2}
%\end{table} 
 
%In some cases, a one-field formulation can be obtained from \eqref{XGDG} following the idea in \cite{arnold2002unified} for elliptic problems.
%By the first equation in \eqref{XGDG}, the variable $\dsig$ can be represented in terms of the variable $\du$. 
%Thus, the two-field formulation \eqref{XGDG} can be further reduced to a one-field formulation with only one variable $\du$.

%\input{../qingguo}

\section{Two limiting cases}\label{sec:limit}

\subsection{Mixed methods: A limiting case of the formulation \eqref{XGHDG0}}
The mixed methods \cite{gopalakrishnan2011symmetric,hu2014family,hu2015family,arnold2002mixed}  for linear elasticity problems can be generalized into the following formulation which seeks $(\dsig^M, \du^M)\in Q_{h}^{M}\times V_{h}$ such that
\begin{equation}\label{mixEq}
\left\{
\begin{array}{rll}
(A\dsig^M,\dtau^M) 
+(\du^M, \divh \dtau^M) 
&=0,&\ \forall \dtau^M\in Q_{h}^M,
\\
(\divh_h\dsig^M, \dv) 
&=(f,\dv),&\ \forall \dv\in V_{h},
\end{array}
\right.
\end{equation}
with 
\begin{equation*}
Q_{h}^{M}=\{\dtau\in Q_h: \langle [\dtau], \hv \rangle=0, \ \forall \hv\in \check V_{h}\}.
\end{equation*}
%Let $Q_h=Q_h^{k+1}$, $V_h=V_h^{k}$, $\check V_h=\check V_h^{k}$ for any $k\ge 0$, the above formulation \eqref{mixEq} turns to be  the nonconforming element in \cite{gopalakrishnan2011symmetric}.  
Let $Q_h=Q_h^{k+1}$, $V_h=V_h^{k}$, $\check V_h=\check V_h^{k+1}$ for any $k\ge n$, {the formulation \eqref{mixEq}} becomes the 
conforming mixed element in \cite{hu2014family,hu2015family}. Let  
$$
Q_h=\{\dtau\in Q_h^{k+2}, \divh_h \dtau|_K\in P_{k}(K, \mathbb{R}^2)\},\quad V_h=V_h^{k},\quad \check V_h=\check V_h^{k+2}
$$ 
for any $k\ge 1$. The corresponding formulation \eqref{mixEq} is the conforming mixed element in \cite{arnold2002mixed}.

Consider the three-field formulation \eqref{XGH} with $\gamma=0$, $\tau_2=0$, $\check Q_h=\{0\}$ and $V_h|_\cE \subset \check{V}_h$. 
By the DG identity \eqref{DGidentity}, this three-field formulation  seeks $(\dsig, \du,  \hu)\in Q_h \times V_h \times \check{V}_{h}$ such that for any $(\dtau, \dv, \hv)\in Q_h \times V_h \times \check V_{h}$,
\begin{equation}\label{HDGmixed}
\left\{
\begin{split}
(A\dsig,\dtau) 
+(\du, \divh_h \dtau) 
-\langle \hu+\{\du\},[\dtau]\rangle
&= 0,
\\
(\divh_h\dsig, \dv) 
- \langle [\dsig], \{ \dv\}\rangle
&=(f,\dv),
 \\
\langle [\dsig], \hv \rangle &=0,
\end{split}
\right.
\end{equation}
which is equivalent  to the mixed formulation \eqref{mixEq}.  
 { As stated in Remark \ref{remarkdiv}, the three-field formulation \eqref{HDGmixed} is well-posed, thus \eqref{mixEq} is also well-posed with}
%By Theorem \ref{Th:inf-supDiv}, the three-field formulation \eqref{HDGmixed} is uniformly well-posed under Assumptions (D1)--(D3). Thus, the limiting formulation \eqref{mixEq} is also well-posed with
\begin{equation}\label{mix:stability}
\|\dsig^M\|_{\rm div, h} + \|\du^M\|_{0, h}\lesssim \|f\|_0.
\end{equation}

Furthermore, a similar analysis to the one in \cite{hong2020extended} {provides} the following theorem.
\begin{theorem}\label{th:mix}
Assume (D1)-(D3) hold. 
Let $(\dsig,   \du)\in Q_h \times V_h$ be the solution of \eqref{LDGXG} and 
$(\dsig^M, \du^M)\in Q_{h}^{M}\times V_{h}$ be the  solution of the corresponding mixed method \eqref{mixEq}. 
If $V_h|_\cE\subset \check V_h$, the formulation \eqref{LDGXG} with $\gamma=0$ and $\rho_1 + \rho_2\rightarrow 0$ converges to the  
mixed method \eqref{mixEq} and 
\begin{equation}
\|\dsig - \dsig^M\|_0+\|\divh_h(\dsig - \dsig^M)\|_0 + \|\du - \du^M\|_0 \lesssim  ( \rho_1^{\frac{1}{2}} + \rho_2^{\frac{1}{2}}) \|f\|_0 .
\end{equation}
\end{theorem}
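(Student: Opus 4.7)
The plan is to derive an error system by subtracting the mixed formulation from the XG formulation, then use the a priori stability estimate \eqref{div_stability} to show that the resulting consistency defects are $O((\rho_1^{1/2}+\rho_2^{1/2})\|f\|_0)$, and finally to invoke the inf-sup condition of the well-posed mixed pair $(Q_h^M,V_h)$ to pass these defect bounds back to the error $(e_\sigma,e_u):=(\dsig-\dsig^M,\du-\du^M)$.

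First I would exploit the crucial consequence of $V_h|_\cE\subset\check V_h$: for any $\dtau^M\in Q_h^M\subset Q_h$ one has $\check P_h^u[\dtau^M]=0$, and for any $\dv\in V_h$ the average $\{\dv\}$ lies in $\check V_h$, so the penalty term in $a_D$ and the jump pairing in $b_D$ vanish when tested against $Q_h^M$ functions. Testing the first line of \eqref{LDGXG} with $\dtau^M\in Q_h^M$ and subtracting the corresponding line of \eqref{mixEq} yields
\begin{equation*}
(Ae_\sigma,\dtau^M)+(\divh_h\dtau^M,e_u)=0,\qquad\forall\,\dtau^M\in Q_h^M,
\end{equation*}
while subtracting the second equations and using $b_D(\dsig,\dv)=(\divh_h\dsig,\dv)-\langle[\dsig],\{\dv\}\rangle$ gives
\begin{equation*}
(\divh_h e_\sigma,\dv)=\langle[\dsig],\{\dv\}\rangle+\langle\tau\check P_h^\sigma[\du]\bn,[\dv]\bn\rangle,\qquad\forall\, \dv\in V_h.
\end{equation*}

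Next I would bound this right-hand side using \eqref{div_stability}: the definitions of $\|\cdot\|_{\rm div,h}$ and $\|\cdot\|_{0,h}$ in \eqref{divnormdef} yield $\|\tau_2^{-1/2}[\dsig]\|_\cE\lesssim\|f\|_0$ and $\|\tau_1^{1/2}\check P_h^\sigma[\du]\bn\|_\cE\lesssim\|f\|_0$. Substituting $\tau_i=\rho_i h_e$ and applying trace and inverse inequalities on the polynomial space $V_h$ to bound $\|h_e^{1/2}\{\dv\}\|_\cE+\|h_e^{1/2}[\dv]\bn\|_\cE\lesssim\|\dv\|_0$, the right-hand side above is $\lesssim(\rho_1^{1/2}+\rho_2^{1/2})\|f\|_0\|\dv\|_0$. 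Since $\divh_h e_\sigma\in V_h$ by Assumption (D1), the choice $\dv=\divh_h e_\sigma$ yields $\|\divh_h e_\sigma\|_0\lesssim(\rho_1^{1/2}+\rho_2^{1/2})\|f\|_0$.

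To estimate $\|e_\sigma\|_0+\|e_u\|_0$ I would invoke the inf-sup condition of the mixed pair $(Q_h^M,V_h)$ guaranteed by \eqref{mix:stability}. Because $e_\sigma\notin Q_h^M$ in general, I would first split $e_\sigma=(\dsig-\dsig^P)+(\dsig^P-\dsig^M)$, where $\dsig^P\in Q_h^M$ is a conforming projection of $\dsig$ obtained by lifting the jump $\check P_h^u[\dsig]$ into $Q_h^M$ in the spirit of Lemma \ref{lm:div}; it is to satisfy $\|\dsig-\dsig^P\|_0+\|\divh_h(\dsig-\dsig^P)\|_0\lesssim\|h_e^{-1/2}\check P_h^u[\dsig]\|_\cE\lesssim\rho_2^{1/2}\|f\|_0$. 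Applying the mixed inf-sup to the conforming pair $(\dsig^P-\dsig^M,e_u)\in Q_h^M\times V_h$, substituting the error equations for the numerator, and using the preceding display yields $\|\dsig^P-\dsig^M\|_0+\|e_u\|_0\lesssim(\rho_1^{1/2}+\rho_2^{1/2})\|f\|_0$; the triangle inequality then completes the proof. The main obstacle is the construction of $\dsig^P$: it must simultaneously respect the strong symmetry of the stress space, the boundary condition $\dtau\bn|_{\Gamma_N}=0$ encoded in $Q_h^M$, and provide both $L^2$ and $H(\divh)$ control by the weighted jump $\check P_h^u[\dsig]$; this parallels but technically complicates the scalar lifting used in \cite{hong2020extended}.
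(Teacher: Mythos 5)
Your proposal is correct in outline, but it takes a genuinely different route from the paper's proof. The paper subtracts \eqref{mixEq} from \eqref{LDGXG1} while keeping the test functions in the \emph{full} spaces $Q_h\times V_h$, so the mixed solution enters as a consistency defect on the right-hand side of the error system \eqref{XGerroreq}; it then applies the stability of the XG two-field scheme (Theorem \ref{Th:D}) to the raw error $(\dsig-\dsig^M,\du-\du^M)$, absorbs the nonconformity by approximating each test function $\dtau\in Q_h$ by some $\dtau^M\in Q_h^M$ with $\|\dtau-\dtau^M\|_0+\|\divh_h(\dtau-\dtau^M)\|_0\lesssim\bigl(\sum_{e}h_e^{-1}\|[\dtau]\|_{0,e}^2\bigr)^{1/2}\le\rho_2^{1/2}\|\dtau\|_{\rm div,h}$, and finally invokes \eqref{mix:stability} only to bound $\|\dsig^M\|_0+\|\du^M\|_0\lesssim\|f\|_0$. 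You instead test against the \emph{conforming} spaces, so the defects $\langle[\dsig],\{\dv\}\rangle$ and $\langle\tau\check P_h^\sigma[\du]\bn,[\dv]\bn\rangle$ are driven by the XG solution itself and are bounded directly by \eqref{div_stability}; you then extract $\|\divh_h(\dsig-\dsig^M)\|_0$ by the clean choice $\dv=\divh_h(\dsig-\dsig^M)$ (legitimate by (D1)) and transfer the remaining errors through the mixed inf-sup after conformifying $\dsig$ into $\dsig^P\in Q_h^M$. The trade-off: your argument is a standard Strang-type perturbation of a stable scheme and isolates the divergence error elegantly, but it requires the \emph{full} saddle-point stability of \eqref{mixEq} for arbitrary functionals in both equations, not merely the a priori bound \eqref{mix:stability} for the particular datum $f$ that suffices in the paper; this stronger stability is available (Remark \ref{remarkdiv} with $\tau_2=0$ gives coercivity on the discrete kernel and the inf-sup condition for the pair $Q_h^M\times V_h$), and you should cite it explicitly. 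Finally, the ``main obstacle'' you flag --- a symmetric, jump-controlled conforming companion in $Q_h^M$ respecting $\dtau\bn|_{\Gamma_N}=0$ --- is not an extra cost of your route: the paper relies on exactly the same unproved approximation estimate (applied to the test function $\dtau$ rather than to $\dsig$), appealing to the analysis of \cite{hong2020extended}, so both proofs share this dependency; note also that since the norm \eqref{divnormdef} controls the full jump via $\|\tau_2^{-1/2}[\dsig]\|_\cE\lesssim\|f\|_0$, you may replace the projected jump $\check P_h^u[\dsig]$ by $[\dsig]$ in your bound for $\dsig-\dsig^P$, which slightly simplifies the construction.
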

\begin{proof}
Recall the two-field formulation \eqref{LDGXG}
\begin{equation}\label{LDGXG1}
\left\{
\begin{array}{rlr}
(A\dsig,\dtau)
+\langle   \tau_2^{-1} \check{P}_h^u [\dsig], [\dtau]\rangle + (\divh_h\dtau,\du)
-\langle  [\dtau] , \{\du\}\rangle
&=0,
&\forall \dtau\in Q_h,
\\
(\divh_h\dsig,\dv)
-\langle  [\dsig] , \{\dv\}\rangle
- \langle   \tau_1 \check{P}_h^\sigma[\du]\bn, [\dv]\bn\rangle
&=(f,\dv),
&\forall \dv \in V_h.
\end{array}
\right.
\end{equation}
Substracting \eqref{mixEq} from \eqref{LDGXG1}, we obtain 
\begin{equation}\label{XGerroreq}
\left\{
\begin{array}{rlr}
(A(\dsig-\dsig^M),\dtau)
+\langle  \tau_2^{-1} \check{P}_h^u [\dsig-\dsig^M], [\dtau]\rangle &+ (\divh_h\dtau,\du-\du^M)
-\langle  [\dtau] , \{\du-\du^M\}\rangle\\
=  -(u_h^M, {\rm div}_h (\bm \tau_h-\bm \tau_h^M))&-(A\bm \sigma_h^M, \bm \tau_h-\bm \tau_h^M)+\langle  [\dtau] , \{\du^M\}\rangle 
\\
(\divh_h(\dsig-\dsig^M),\dv)
-\langle  [\dsig-\dsig^M] , \{\dv\}\rangle &- \langle\tau_1 \check{P}_h^\sigma[\du-\du^M]\bn, [\dv]\bn\rangle
\\
&= \langle\tau_1 \check{P}_h^\sigma[\du^M]\bn, [\dv]\bn\rangle 
\end{array}
\right.
\end{equation}
for any $(\dtau, \dv)\in Q_h\times V_h$. 
By the stability estimate in Theorem \ref{Th:D}, trace inequality and note that $\tau_1=\rho_1 h_e $, $\tau_2=\rho_2 h_e$,
\begin{equation}\label{error:stability1}
\begin{aligned}
&\|\dsig-\dsig^M\|_{\rm div, h} + \|\du-\du^M\|_{0, h}
\\
\lesssim &\sup_{\dtau\in Q_h}\frac{
 { |-(u_h^M, {\rm div}_h (\bm \tau_h-\bm \tau_h^M))-(A\bm \sigma_h^M, \bm \tau_h-\bm \tau_h^M)+\langle  [\dtau] , \{\du^M\}\rangle | }}{\|\dtau\|_{\rm div, h}}
\\
&+\sup_{\dv\in V_h}\frac{ { |\langle\tau_1 \check{P}_h^\sigma[\du^M]\bn, [\dv]\bn\rangle |} }{\|\dv\|_{0, h}}\\
\lesssim &\sup_{\dtau\in Q_h}\frac{
\|u_h^M\|_0 \|{\rm div}_h (\bm \tau_h-\bm \tau_h^M)\|_0 
+ \|{ A}\bm \sigma_h^M\|_0 \|\bm \tau_h-\bm \tau_h^M\|_0}{\|\dtau\|_{\rm div, h}}+( \rho_1^{\frac{1}{2}} + \rho_2^{\frac{1}{2}})\|\du^M\|_{0}
\end{aligned}
\end{equation}
where $\|\cdot \|_{\rm div, h}$ and $\|\cdot\|_{0, h}$ are defined in \eqref{divnormdef}.
%\begin{equation}
%\begin{array}{ll}
%\|\dtau\|_{\rm div, h}^2 =(A\dtau,\dtau)
%+ \|\divh_h \dtau\|_0^2
%+ \| \tau_2^{-1/2}[\dtau ]\|_0^2,\quad
%\|\dv\|_{0, h}^2 =\|\dv\|_0^2+ \|\tau_1^{1/2}[\dv]\|_0^2+\|\tau_2^{1/2}\{\dv\}\|_0^2.
%\end{array}
%\end{equation} 

For any given $\bm \tau_h \in Q_h$, we have 
\begin{equation}
\begin{aligned}
\inf_{\bm \tau_h^M\in Q_h^M} \left(\|{\rm div}_h (\bm \tau_h-\bm \tau_h^M)\|+ \|\bm \tau_h-\bm \tau_h^M\|\right)\lesssim 
\big(\sum_{e\in \mathcal{E}_h} h_e^{-1}\|[\bm
\tau_h]\|^2_{0,e}\big)^{\frac{1}{2}}\le \rho_2^{\frac12}  \|\bm \tau_h\|_{{\rm div},h}
\end{aligned}
\end{equation}
It follows from stability estimates \eqref{mix:stability} that
\begin{equation}\label{error:stability2}
\|\dsig-\dsig^M\|_{\rm div, h} + \|\du-\du^M\|_{0, h}\lesssim ( \rho_1^{\frac{1}{2}} + \rho_2^{\frac{1}{2}}){ \left(\|\du^M\|_{0}+  \|\bm \sigma_h^M\|_0\right)}\lesssim ( \rho_1^{\frac{1}{2}} + \rho_2^{\frac{1}{2}})\|f\|_0,
\end{equation} 
which completes the proof.
\end{proof}

\subsection{Primal methods: A limiting case of the formulation \eqref{XGDG}} 
The  primal method for linear elasticity problems seeks $\du^P\in V_{h}^{P}$ such that
\begin{equation}\label{non:primal}
(C\epsilon_h(\du^P), \epsilon_h(\dv))=-(f,\dv),\ \forall \dv\in V_{h}^{P}
\end{equation}
with $C=A^{-1}$ and
\begin{equation}
V_{h}^{P}=\{\du\in V_h: \langle [\du], \htau \rangle= 0, \forall \htau\in \check{Q}_h\},
\end{equation}
where $[\dv]$ is defined in \eqref{jumpdef}.
If $\epsilon_h(V_h)\subset Q_h$, { the formulation \eqref{non:primal} }is equivalent to the following formulation which seeks $(\dsig^P, \du^P)\in Q_{h}\times V_{h}^{P}$ such that
\begin{equation}\label{primalEq}
\left\{
\begin{array}{rll}
(A\dsig^P,\dtau)
-(\dtau, \epsilon_h (\du^P))
&=0,&\dtau\in Q_{h},
\\
-(\dsig^P, \epsilon_h (\dv))
&=(f,\dv),
&\dv\in V_{h}^P.
\end{array}
\right.
\end{equation} 
Consider the three-field formulation \eqref{XGW} with $\gamma=0$, $\check V=\{0\}$  seeks $(\dsig, \du,  \hsig)\in Q_h \times V_h \times \check{Q}_{h}$ such that
\begin{equation}\label{XGconf}
\left\{
\begin{array}{rll}
(A\dsig,\dtau)
-(\dtau, \epsilon_h (\du))
+\langle  \{\dtau\}\bn, [\du]\bn\rangle 
&= 0,&\dtau\in Q_h,
\\
-(\dsig, \epsilon_h (\dv))
+\langle  \{\dsig\}\bn  + \hsig\bn , [\dv]\bn\rangle
&
=(f,\dv),  &\dv\in V_{h},
\\
\langle \eta_1  \hsig , \htau \rangle
+ \langle  [\du], \htau \rangle
&=0,& \htau \in \check{Q}_{h}.
\end{array}
\right.
\end{equation}
If $V_h|_\cE \subset \check Q_h\bn$, as $\eta_1\rightarrow 0$,  the resulting formulation  is exactly the primal formulation \eqref{primalEq}. 
Under the assumptions (G1)-(G3),  Theorem \ref{Th:inf-supGrad} implies the well-posedness of   \eqref{XGconf}, and 
\begin{equation} 
\|\dsig\|_{0, h} + \|\du\|_{1, h} +\|\hsig\|_{0, h}\lesssim \|f\|_0 .
\end{equation}
{ By Remark \ref{remarkgrad}, the primal formulation \eqref{primalEq}
}
%Since this estimate is uniformly bounded with respect to $\rho_1$, the primal  formulation 
is also well-posed with
\begin{equation} 
\|\dsig^P\|_{0, h}+\|\du^P\|_{1, h}\lesssim 
\sup_{\dv\in V_h^P} \frac{(f,\dv)}{\|\dv\|_{1, h}}.
\end{equation}
\begin{remark}
If $V_h=V_h^{k+1}, Q_h=Q_h^k, \check Q_h=\check Q_h^k, k\ge 1$, the { formulation \eqref{XGconf}} tends to a high order nonconforming discretization 
\eqref{non:primal} for the elasticity problem with only one variable.  The relationship between the Crouzeix-Raviart  element discretization and discontinuous Galerkin method for 
linear elasticity can be found in \cite{hansbo2003discontinuous}.
\end{remark}

In addition, a similar analysis to the one of Theorem \ref{th:mix} { provides} the following theorem. 
\begin{theorem}
Assume that (G1)-(G3) hold. Let $(\dsig, \du)\in Q_h \times V_h $ be the solution of \eqref{XGDG} and $(\dsig^P, \du^P)\in Q_{h}\times V_{h}^{P}$ be the solution of the corresponding primal method \eqref{primalEq}.  Then the formulation \eqref{XGDG} with   $\rho_1 + \rho_2\rightarrow 0$ converges to the primal method \eqref{primalEq} and
\begin{equation}
\|\dsig - \dsig^P\|_0 + \|\epsilon_h (\du - \du^P)\|_0 +  \|h_e^{-1/2}[\du-\du^P]\|_0
\lesssim   (\rho_1^{1/2} + \rho_2^{1/2})  \|f\|_0. 
\end{equation} 
\end{theorem}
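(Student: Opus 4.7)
The proof follows the template established for Theorem~\ref{th:mix} in the mixed case: combine the uniform well-posedness of the XG formulation \eqref{XGDG} from Theorem~\ref{Th:D} with a consistency analysis showing that the primal pair $(\dsig^P, \du^P)$ satisfies \eqref{XGDG} up to a residual that scales as $\mathcal{O}(\rho_1^{1/2}+\rho_2^{1/2})\|f\|_0$. Because \eqref{XGDG} is equivalent to the four-field system \eqref{XGgrad} under (G1)--(G3), the eliminated fluxes take the explicit form $\tsig = \{\dsig\} - \tau_1\check P_h^\sigma[\du]$ and $\tu = \{\du\} - \eta_2\check P_h^u[\dsig]$ with $\gamma=0$, $\tau_1 = (\rho_1 h_e)^{-1}$, $\eta_2 = \rho_2 h_e$, and these are the fluxes whose inconsistency at $(\dsig^P,\du^P)$ must be estimated.

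First, I would substitute $(\dsig^P,\du^P)$ as a trial pair into \eqref{XGDG}. The identity $(A\dsig^P,\dtau) = (\dtau,\epsilon_h(\du^P))$ from \eqref{primalEq} together with an application of the DG identity \eqref{DGidentity} cancels the bulk contributions of the first equation. Crucially, the defining orthogonality $\check P_h^\sigma[\du^P]=0$ (since $\du^P\in V_h^P$) annihilates the $\tau_1$-penalty inside $\tsig^P$, so the potentially divergent factor $(\rho_1 h_e)^{-1}$ never appears in the residual. What remains in the first equation is a jump term of the form $\langle \eta_2\,\check P_h^u[\dsig^P],[\dtau]\rangle$, and in the second equation the residual reduces to boundary contributions through $\{\dsig^P\}$ paired against $\check P_h^\sigma[\dv]$ for $\dv\in V_h\setminus V_h^P$. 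Next, by the trace inequality applied on each element, $\|\eta_2^{1/2}[\dsig^P]\|_{\cE}\lesssim \rho_2^{1/2}\|\dsig^P\|_0$ and, by the stability of the primal method, $\|\dsig^P\|_0+\|\epsilon_h(\du^P)\|_0\lesssim\|f\|_0$. The second residual is likewise controlled by $\rho_1^{1/2}\|\dsig^P\|_0$ after using $\eta_1^{-1/2}=\rho_1^{-1/2}h_e^{-1/2}$ inside the $\|\cdot\|_{1,h}$ norm of $\dv$.

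Next I would invoke the stability from Theorem~\ref{Th:D} to conclude
\begin{equation*}
\|\dsig-\dsig^P\|_{0,h} + \|\du-\du^P\|_{1,h}\lesssim (\rho_1^{1/2}+\rho_2^{1/2})\|f\|_0.
\end{equation*}
Extracting the three pieces of the target inequality from this master estimate then proceeds by direct embedding: $\|\dsig-\dsig^P\|_0^2\lesssim (A(\dsig-\dsig^P),\dsig-\dsig^P)\le \|\dsig-\dsig^P\|_{0,h}^2$, $\|\epsilon_h(\du-\du^P)\|_0\le\|\du-\du^P\|_{1,h}$, and
\[\|h_e^{-1/2}[\du-\du^P]\|_{\cE}=\rho_1^{1/2}\|\eta_1^{-1/2}\check P_h^\sigma[\du-\du^P]\|_{\cE}\le \rho_1^{1/2}\|\du-\du^P\|_{1,h}, \]
where the first equality uses the assumption $V_h|_\cE\subset \check Q_h\bn$ (which underlies the primal reduction in the discussion surrounding \eqref{XGconf}) so that $\check P_h^\sigma$ acts as the identity on $[V_h]$. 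The resulting factor $\rho_1^{1/2}(\rho_1^{1/2}+\rho_2^{1/2})\lesssim \rho_1^{1/2}+\rho_2^{1/2}$ in the small-parameter regime gives the stated bound.

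The main obstacle is the careful bookkeeping of the scalings in $\rho_1$ and $\rho_2$ as the penalty parameter $\tau_1$ diverges. Showing that $\check P_h^\sigma[\du^P]=0$ really does neutralize the would-be singular $\tau_1$-contribution (rather than merely damping it) is the delicate point, and it hinges on the exact definition of $V_h^P$ and the projection $\check P_h^\sigma$ agreeing on the right subspace. The secondary difficulty, namely translating the projected jump $\check P_h^\sigma[\du-\du^P]$ controlled by the $\|\cdot\|_{1,h}$ norm into the unprojected jump $[\du-\du^P]$ appearing in the theorem, is resolved by the same $V_h|_\cE\subset\check Q_h\bn$ assumption that was used to identify \eqref{XGconf} with the primal formulation in the first place.
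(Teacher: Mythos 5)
Your overall strategy is exactly the paper's intended one: the paper gives no standalone proof of this theorem, stating only that ``a similar analysis to the one of Theorem \ref{th:mix} provides'' it, and your sketch is a faithful transcription of that template to the primal limit --- error equation between \eqref{XGDG} and \eqref{primalEq}, the key observation that $\check P_h^\sigma[\du^P]=0$ annihilates the divergent $\tau$-penalty, the $\rho_1^{1/2},\rho_2^{1/2}$ scalings from trace/inverse inequalities, the uniform stability of Theorem \ref{Th:D}, the stability of the primal method, and the extraction of the three terms (including the correct identity $h_e^{-1/2}=\rho_1^{1/2}\eta_1^{-1/2}$).

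There is, however, one concrete step where your sketch as written would fail, and it is precisely the step that the paper's proof of Theorem \ref{th:mix} makes explicit. You assert that after substituting $(\dsig^P,\du^P)$ into \eqref{XGDG}, ``the residual reduces to boundary contributions through $\{\dsig^P\}$ paired against $\check P_h^\sigma[\dv]$.'' But the second equation of \eqref{primalEq} reads $-(\dsig^P,\epsilon_h(\dv))=(f,\dv)$ \emph{only for} $\dv\in V_h^P$, whereas the error equation must be tested against all $\dv\in V_h$; for $\dv\notin V_h^P$ the bulk mismatch $-(\dsig^P,\epsilon_h(\dv))-(f,\dv)$ does not cancel and is not itself a boundary term. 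The fix is the mirror image of the paper's inf-estimate over $Q_h^M$ in Theorem \ref{th:mix}: decompose $\dv=\dv^P+w$ and prove the approximation property
\begin{equation*}
\inf_{\dv^P\in V_h^P}\|\dv-\dv^P\|_{1,h}\;\lesssim\;\Bigl(\sum_{e\in\cE}h_e^{-1}\|\check P_h^\sigma[\dv]\|_{0,e}^2\Bigr)^{1/2}\;\lesssim\;\rho_1^{1/2}\,\|\dv\|_{1,h},
\end{equation*}
so that $R_2(\dv)=R_2(\dv^P)+R_2(w)$ with $R_2(\dv^P)$ the pure boundary term you describe (vanishing of the bulk part on $V_h^P$, using $V_h|_\cE\subset\check Q_h\bn$ as you note) and $|R_2(w)|\lesssim(\|\dsig^P\|_0+\|f\|_0)\|w\|_{1,h}\lesssim\rho_1^{1/2}\|f\|_0\|\dv\|_{1,h}$. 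Without this interpolation-type estimate the factor $\rho_1^{1/2}$ you claim for the second residual has no source; with it, the rest of your argument (first-equation residual $\langle\eta_2\check P_h^u[\dsig^P],[\dtau]\rangle\lesssim\rho_2^{1/2}\|\dsig^P\|_0\|\dtau\|_{0,h}$, stability, extraction) goes through and matches the paper's analysis.
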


\section{Numerical examples}\label{sec:numerical}
 
In this section, we display some numerical experiments in 2D to verify the estimate provided in Theorem \ref{Th:inf-supGrad}  and \ref{Th:inf-supDiv}. We consider the model problem \eqref{model} on the unit square $\Om=(0,1)^2$ with the exact displacement 
$$
u= (\sin (\pi x)\sin(\pi y), \sin (\pi x)\sin(\pi y))^T,
$$
and set $f$ and $g$ to satisfy the above exact solution of \eqref{model}.
The domain is partitioned by uniform triangles. The level one triangulation $\mathcal{T}_1$ consists of two right triangles, obtained by cutting the unit square with a north-east line. Each triangulation $\mathcal{T}_i$ is refined into a half-sized triangulation uniformly, to get a higher level triangulation $\mathcal{T}_{i+1}$. For all the numerical tests in this section, fix the parameters $\rho_1=\rho_2=\gamma=1$ and $E=1$.  

{\subsection{Various methods with fixed $\nu$}
In this subsection, we fix $\nu=0.4$. }
Figure \ref{fig:lowgrad1} and \ref{fig:lowgrad} plot the errors for the lowest order $H^1$-based methods mentioned in this paper. 
Figure \ref{fig:lowgrad1} and \ref{fig:lowgrad}  show that the $H^1$-based XG methods with 
$$
Q_h=Q_h^0, V_h=V_h^1, \check Q_h=\check Q_h^0, \check V_h=\check V_h^i\quad \mbox{with}\quad i=0, 1
$$ 
are not well-posed, while those with 
$$
Q_h=Q_h^0, V_h=V_h^1, \check Q_h=\check Q_h^1, \check V_h=\check V_h^i\quad \mbox{with}\quad i=0, 1
$$ 
are well-posed and admit the optimal convergence rate 1.00 as analyzed in Theorem \ref{Th:inf-supGrad}. The discrete spaces of the former methods satisfy Assumption (G1),  but does not meet Assumption (G2). This implies that Assumption (G2) is necessary for the wellposedness of the corresponding method.

\begin{figure}[H]
\setlength{\abovecaptionskip}{0pt}
\setlength{\belowcaptionskip}{0pt}
\centering
\includegraphics[width=7cm]{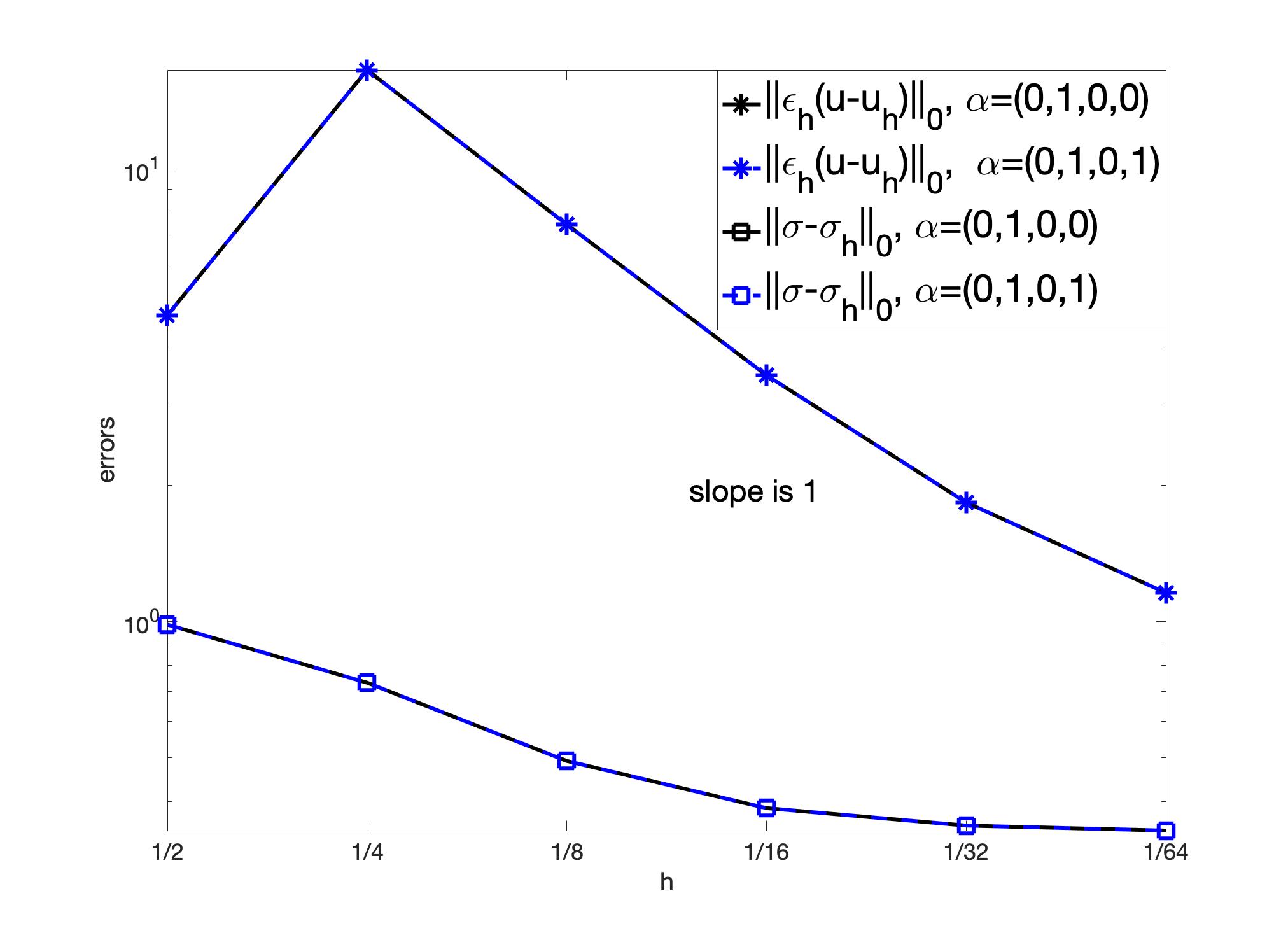} 
\caption{\small Errors of the lowest order $H^1$-based methods with $
Q_h=Q_h^{\alpha_1}$, $V_h=V_h^{\alpha_2}$, $\check Q_h=\check Q_h^{\alpha_3}$, $\check V_h=\check V_h^{\alpha_4}$ and $\alpha=(\alpha_1, \alpha_2, \alpha_3, \alpha_4)$.}
\label{fig:lowgrad1}
\end{figure}

\begin{figure}[H]
\setlength{\abovecaptionskip}{0pt}
\setlength{\belowcaptionskip}{0pt}
\centering 
\includegraphics[width=7cm]{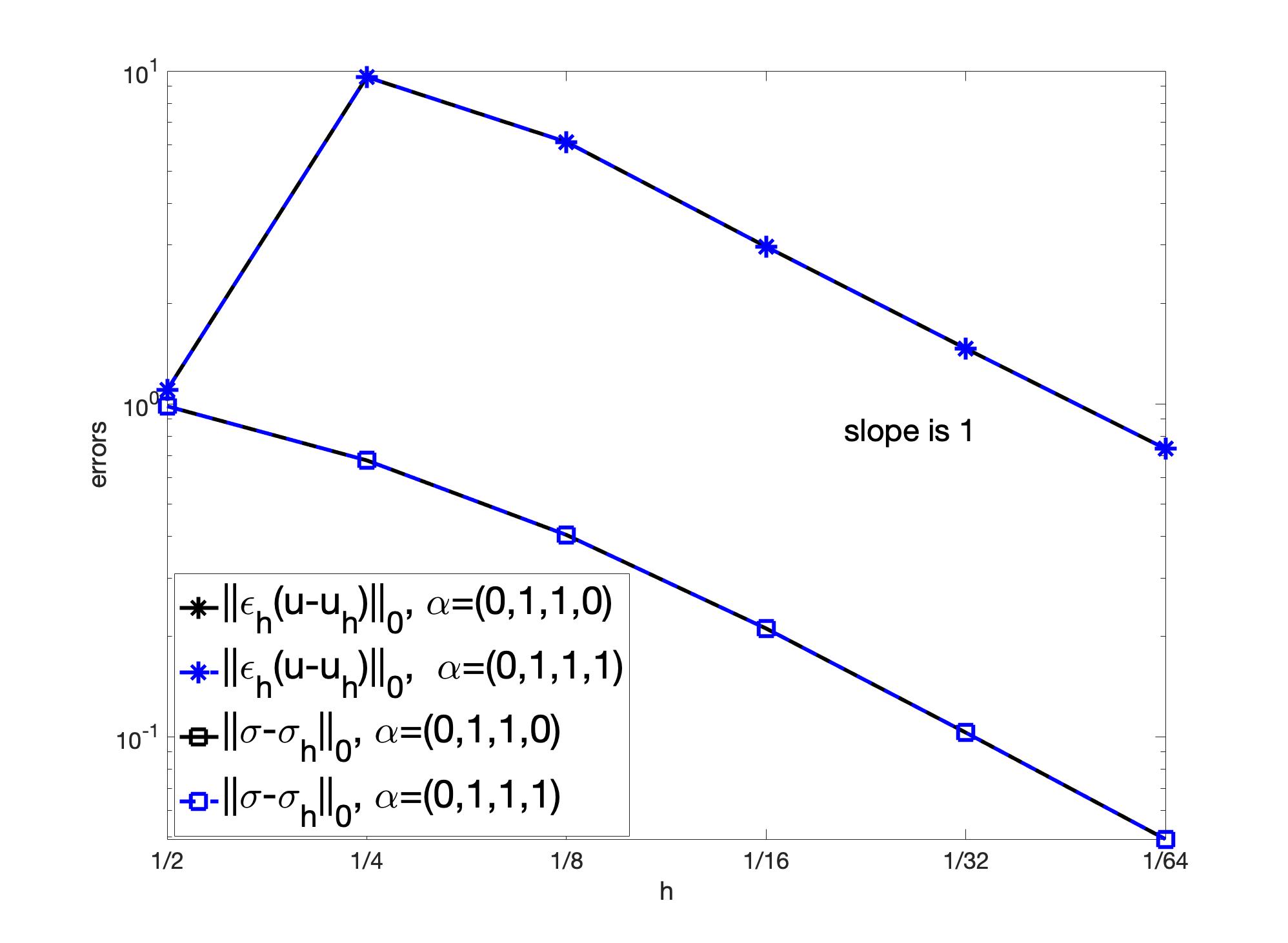} 
\caption{\small Errors of the lowest order $H^1$-based methods with $
Q_h=Q_h^{\alpha_1}$, $V_h=V_h^{\alpha_2}$, $\check Q_h=\check Q_h^{\alpha_3}$, $\check V_h=\check V_h^{\alpha_4}$ and $\alpha=(\alpha_1, \alpha_2, \alpha_3, \alpha_4)$.}
\label{fig:lowgrad}
\end{figure}

\begin{figure}[H]
\setlength{\abovecaptionskip}{0pt}
\setlength{\belowcaptionskip}{0pt}
\centering 
\includegraphics[width=7cm]{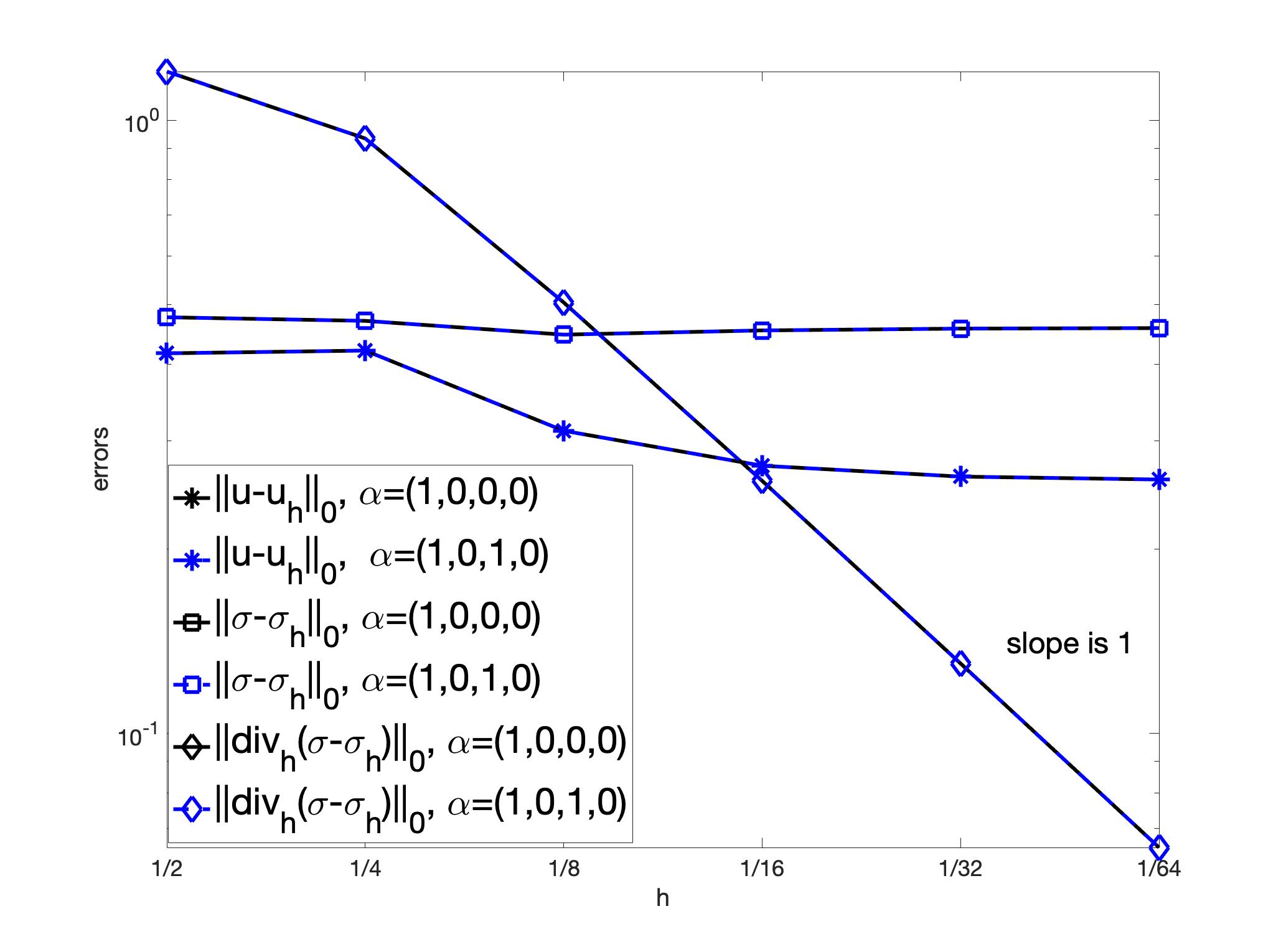} 
\caption{\small Errors of the lowest order $H({\rm div})$-based methods with $
Q_h=Q_h^{\alpha_1}$, $V_h=V_h^{\alpha_2}$, $\check Q_h=\check Q_h^{\alpha_3}$, $\check V_h=\check V_h^{\alpha_4}$ and $\alpha=(\alpha_1, \alpha_2, \alpha_3, \alpha_4)$.}
\label{fig:lowdiv1}
\end{figure}

Figure \ref{fig:lowdiv1} and \ref{fig:lowdiv}  plot the errors of solutions for the lowest order $H({\rm div})$-based methods, which are new in literature. It is shown that the $H({\rm div})$-based methods with 
\begin{equation}\label{test:choice0}
Q_h=Q_h^{1}, V_h=V_h^0, \check Q_h=\check Q_h^{i}, \check V_h=\check V_h^0\quad \mbox{with}\quad 0\le i\le 1
\end{equation}
are not well-posed. Although the error $\|{\rm div}_h(\csig-\dsig)\|_0$ converges at the optimal rate $1.00$, the errors $\|\csig - \dsig\|_0$ and $\|u-\du\|_0$ do not converge at all. It also shows in Figure  \ref{fig:lowdiv1} and \ref{fig:lowdiv} that the { new  lowest order $H({\rm div})$-based methods} with a larger space for $\check V_h$
\begin{equation}\label{test:choice}
Q_h=Q_h^{1}, V_h=V_h^0, \check Q_h=\check Q_h^{i}, \check V_h=\check V_h^{1}\quad \mbox{with}\quad 0\le i\le 1
\end{equation}
are well-posed and the corresponding errors $\|\csig - \dsig\|_0$, $\|\divh_h(\csig - \dsig)\|_0$ and $\|u-\du\|_0$ admit the optimal convergence rate $1.00$.  This coincides with the results in Theorem \ref{Th:inf-supDiv}. The comparison between the  $H({\rm div})$-based methods  in \eqref{test:choice0} and \eqref{test:choice} implies that Assumption (D2) is necessary for the wellposedness of the corresponding method.

\begin{figure}[H]
\setlength{\abovecaptionskip}{0pt}
\setlength{\belowcaptionskip}{0pt}
\centering  
\includegraphics[width=7cm]{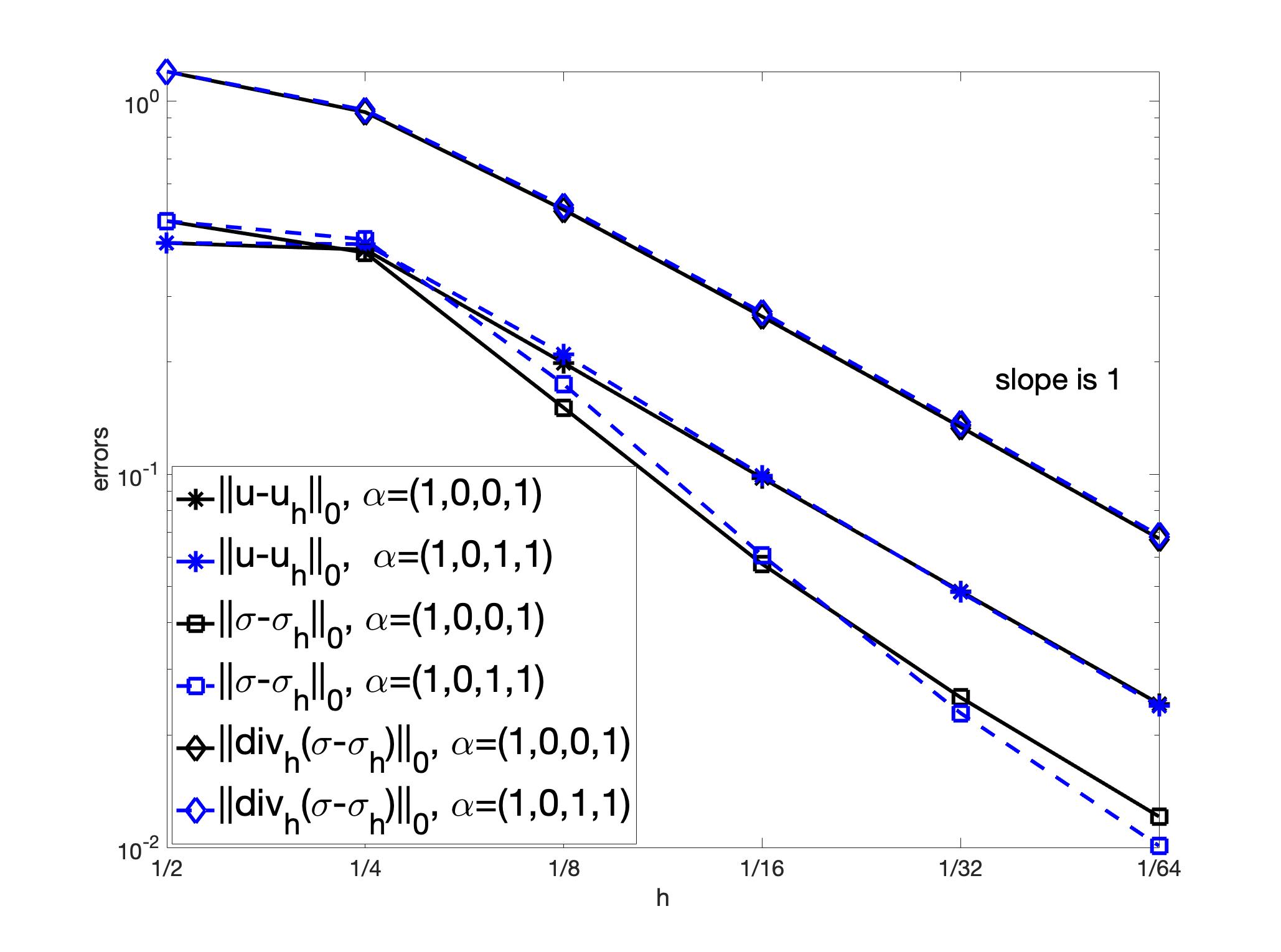}
\caption{\small Errors of the lowest order $H({\rm div})$-based methods with $
Q_h=Q_h^{\alpha_1}$, $V_h=V_h^{\alpha_2}$, $\check Q_h=\check Q_h^{\alpha_3}$, $\check V_h=\check V_h^{\alpha_4}$ and $\alpha=(\alpha_1, \alpha_2, \alpha_3, \alpha_4)$.}
\label{fig:lowdiv}
\end{figure}

\begin{figure}[!ht]
\setlength{\abovecaptionskip}{0pt}
\setlength{\belowcaptionskip}{0pt}
\centering   
\includegraphics[width=7cm]{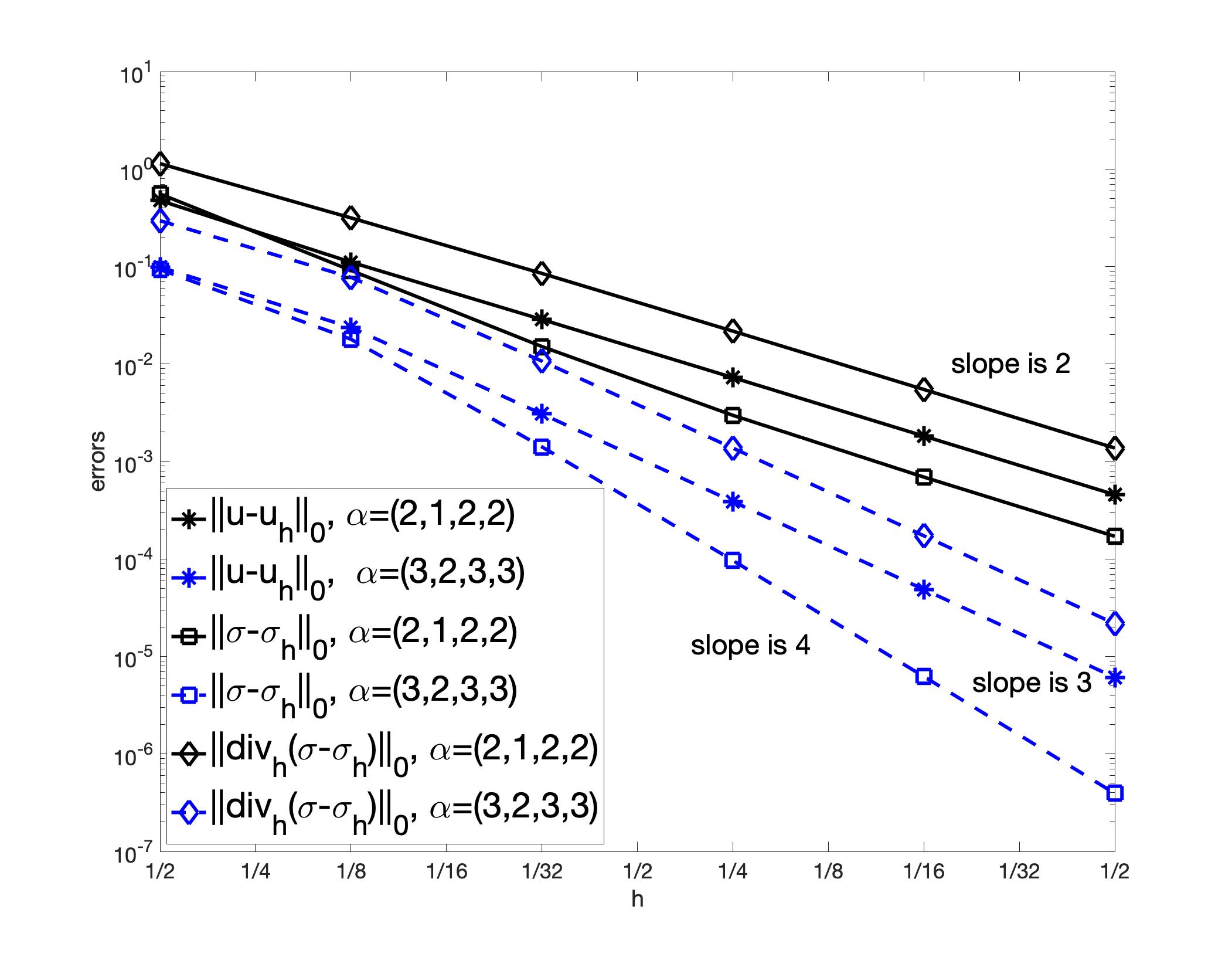} 
\caption{\small Errors for some high order $H({\rm div})$-based methods with $
Q_h=Q_h^{\alpha_1}$, $V_h=V_h^{\alpha_2}$, $\check Q_h=\check Q_h^{\alpha_3}$, $\check V_h=\check V_h^{\alpha_4}$ and $\alpha=(\alpha_1, \alpha_2, \alpha_3, \alpha_4)$.}
\label{fig:div}
\end{figure}
 
Consider the $L^2$ norm of the error of the stress tensor $\csig$. Figure \ref{fig:div} plots the errors for higher order $H({\rm div})$-based methods. For the XG formulation with $\alpha=(2, 1, 2, 2)$, $k=1$ is less than $n=2$. Theorem \ref{Th:inf-supDiv} indicates that the convergence rate of $\|\csig - \dsig\|_0$ is $2.00$ { for this new second order $H({\rm div})$-based method}, which is verified by the numerical results in Figure \ref{fig:div}. For the XG formulation with $\alpha=(3, 2, 2, 3)$, $k=n$ and the convergence rate of $\|\csig - \dsig\|_0$ shown in Figure \ref{fig:div} is $4$, which coincides with the estimate in Theorem \ref{Th:sigL2div}. This comparison indicates that the assumption $k\ge n$ in Theorem \ref{Th:sigL2div} is necessary and the error estimate of $\|\csig - \dsig\|_0$ is optimal. 

{\subsection{The lowest order methods with various $\nu$}

Figure \ref{fig:gradlam} plots the relative error of the approximate solutions of the $H^1$-based method with $Q_h^{0}\times \check Q_{h}^1\times V_h^{1}\times \check V_{h}^{0}$ with different $\nu$ ($\nu$ tends to $\frac12$). Figure \ref{fig:gradlam} shows that both $\|\epsilon_h(u -\du)\|_0$ and $\|\csig - \dsig\|_0$ converge at the optimal rate 1.00, and the error $\|\csig - \dsig\|_0$ are almost the same for different value of $\nu$.

\begin{figure}[!ht]
\setlength{\abovecaptionskip}{0pt}
\setlength{\belowcaptionskip}{0pt}
\centering   
\includegraphics[width=7cm]{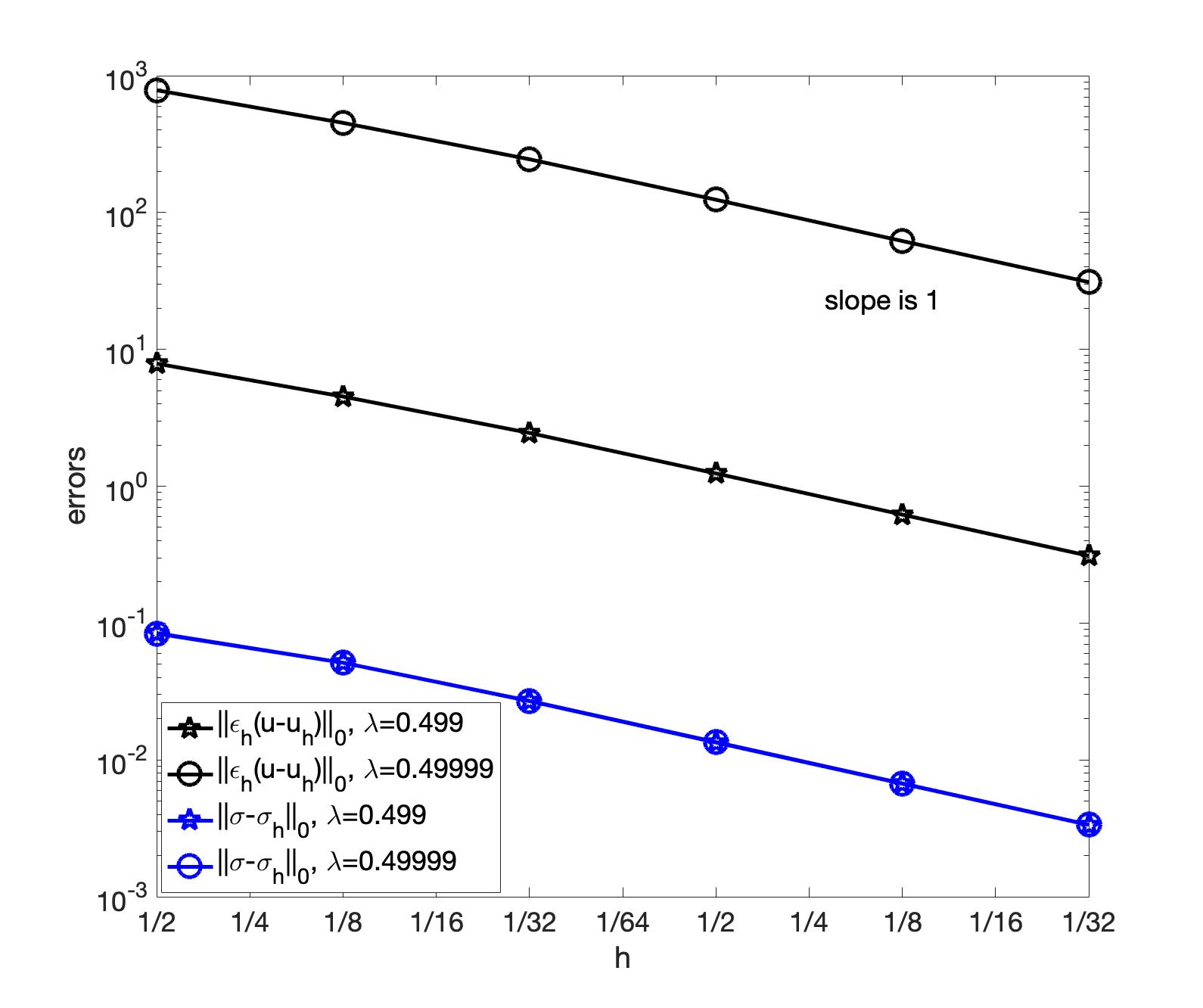} 
\caption{\small Errors for the lowest order $H^1$-based methods $Q_h^{0}\times \check Q_{h}^1\times V_h^{1}\times \check V_{h}^{0}$ with different $\nu$.}
\label{fig:gradlam}
\end{figure}

Figure \ref{fig:gradlam} plots the relative error of the approximate solutions of the $H({\rm div})$-based method with $Q_h^{1}\times \check Q_{h}^0\times V_h^{0}\times \check V_{h}^{1}$ with different $\nu$ ($\nu$ tends to $\frac12$). Figure \ref{fig:divlam} shows that the errors $\|u -\du\|_0$, $\|\csig - \dsig\|_0$ and $\|\divh(\csig - \dsig)\|_0$ converge at the optimal rate 1.00, and the errors $\|\csig - \dsig\|_0$ and $\|\divh(\csig - \dsig)\|_0$ are almost the same as $\nu$ tends to $\frac12$ which shows that the proposed schemes are locking-free.

\begin{figure}[!ht]
\setlength{\abovecaptionskip}{0pt}
\setlength{\belowcaptionskip}{0pt}
\centering   
\includegraphics[width=7cm]{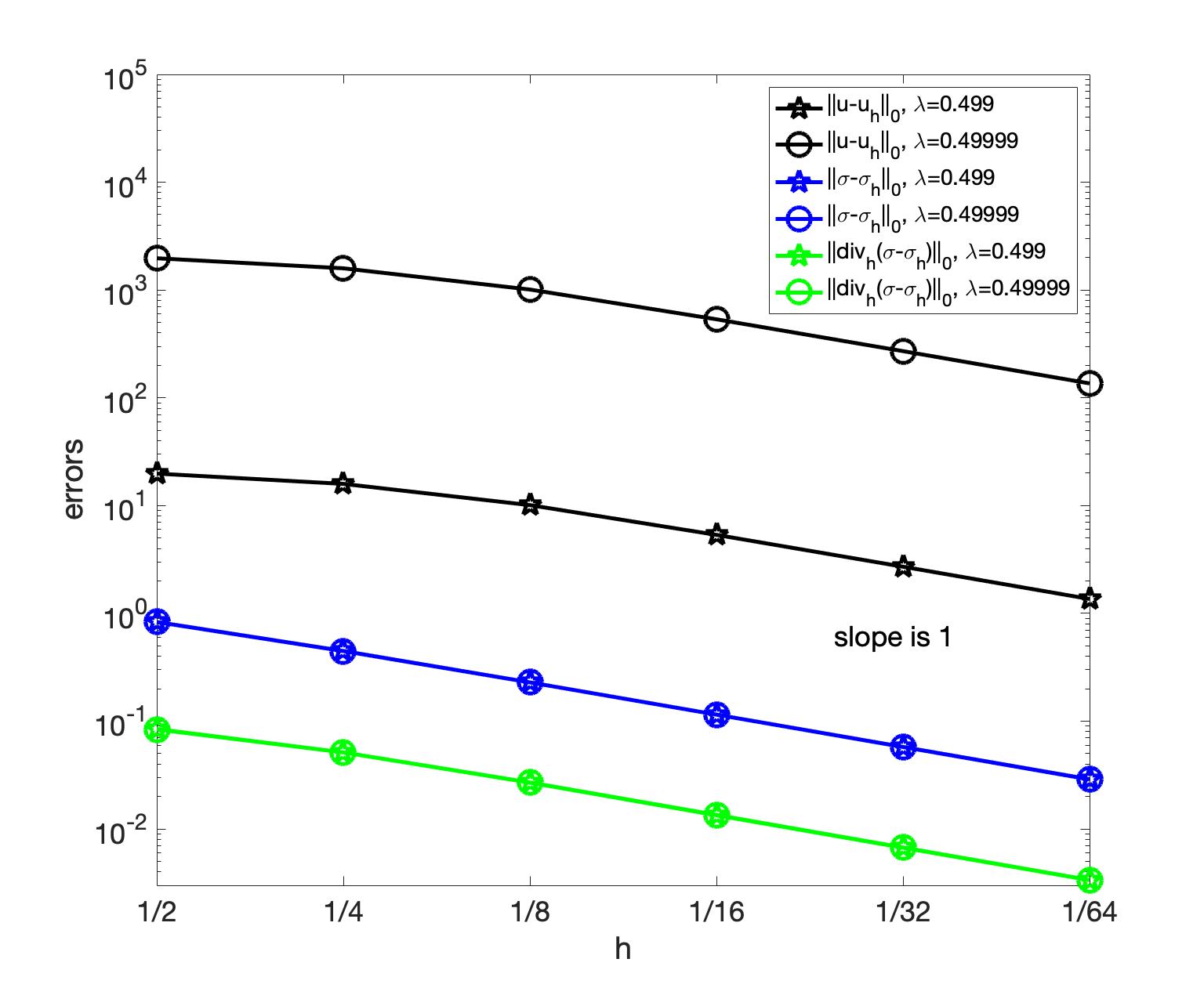} 
\caption{\small Errors for the lowest order $H({\rm div})$-based methods $Q_h^{1}\times \check Q_{h}^0\times V_h^{0}\times \check V_{h}^{1}$ with different $\nu$.}
\label{fig:divlam}
\end{figure}
}

\section{Conclusion}\label{concl}
In this paper, a unified analysis of a four-field formulation is presented and analyzed  for linear elasticity problem. 
This formulation is closely related to most HDG methods, WG methods, LDG methods and mixed finite elements in the literature. 
And some new methods are proposed following the unified framework. Some particular methods are proved to be hybridizable.
In addition, uniform inf-sup conditions for the four-field formulation provide a 
unified way to prove the optimal error estimate under two different sets of assumptions. 
Also, these assumptions guide the design of some well-posed formulations new in literature.

\bibliographystyle{spmpsci}      % mathematics and physical sciences
\bibliography{../../bibelasticity}   % name your BibTeX data base
\end{document}